\documentclass{amsart}
\usepackage{amsmath}
\usepackage{enumerate}
\usepackage{float}
\usepackage{comment}
\usepackage{fancyvrb}
\usepackage{caption}
\usepackage{tikz-cd}
\usepackage{subcaption}

\usepackage{mathtools}
\usepackage{scalerel}
\usepackage{amssymb}
\usepackage{hyperref}
\hypersetup{
    colorlinks=true,
    linkcolor=blue,
    filecolor=magenta,      
    urlcolor=blue,
    citecolor=blue,
}
\urlstyle{same}
\usepackage{color}
\usepackage{graphicx}
\usepackage{mathrsfs}

\usepackage{mathabx}
\usepackage{amsthm}
\usepackage{color}

\usepackage[font=small]{caption}
\usetikzlibrary{cd}
\usetikzlibrary{decorations.markings}
%\usepackage{titlesec}

%\titleformat{\subsection}[runin]
%{\normalfont\bfseries}{\thesubsection}{1em}{}[.]
%\titleformat*{\section}{\large\bfseries}
%\newcommand{\ssection}[1]{%
%  \section[#1]{\centering\normalfont\scshape #1}}
%\newcommand{\ssubsection}[1]{%
%  \subsection[#1]{\raggedright\normalfont\itshape #1}}
%\newcommand{\funfont}[1]{{\fontfamily{cmtt}\selectfont #1}}

%\titleformat{\subsubsection}[runin]
%{\normalfont\bfseries}{\thesubsubsection}{1em}{}[.]

\theoremstyle{definition}
\newtheorem{prop}{Proposition}[section]
\newtheorem{thm}{Theorem}[section]

\newtheorem{lemma}{Lemma}[section]
\newtheorem{defn}{Definition}[section]
\newtheorem{cor}{Corollary}[section]
\newtheorem{rmk}{Remark}[section]
\newtheorem{ex}{Example}[section]

\def\S{\mathfrak{S}}

\def\C{\mathbb{C}}

\def\Z{\mathbb{Z}}

\def\droop{\operatorname{min-droop}}
\def\undroop{\operatorname{min-undroop}}
\def\swap{\operatorname{cross-bump-swap}}

\def\red{\mathsf{R}}
\def\yellow{\mathsf{L}}
\def\wt{\operatorname{wt}}

\def\BPD{\operatorname{BPD}}
\def\blank{\operatorname{blank}}

\def\id{\operatorname{id}}

\def\perm{\operatorname{perm}}
\def\jdt{\operatorname{jdt}}

\def\+{\includegraphics[scale=0.4]{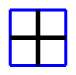}}
\def\bl{\includegraphics[scale=0.4]{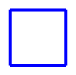}}
\def\bt{\includegraphics[scale=0.4]{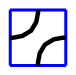}}
\def\rt{\includegraphics[scale=0.4]{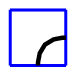}}
\def\jt{\includegraphics[scale=0.4]{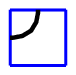}}

\def\vtile{\includegraphics[scale=0.4]{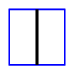}}
\def\htile{\includegraphics[scale=0.4]{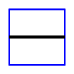}}
\def\ch{\operatorname{ch}_{\uparrow}}
\def\chd{\operatorname{ch}_{\downarrow}}

\def\mindroop{\textsf{min-droop}}
\def\maxdroop{\textsf{max-droop}}
\def\maxd{\operatorname{max-droop}}
\def\minundroop{\textsf{min-undroop}}
\def\cbswap{\textsf{cross-bump-swap}}
\def\term{\textsf{term}}
\def\nextl{\operatorname{nextl}}

\title[BPD RSK, growth diagrams, and Schubert structure constants]{Bumpless pipe dream RSK, growth diagrams, and Schubert structure constants}

\author{Daoji Huang}
\author{Pavlo Pylyavskyy}

\address{Department of Mathematics, University of Minnesota - Twin Cities,  \mbox{Minneapolis, MN, 55455}}
\email{\href{mailto:huan0664@umn.edu}{{\tt huan0664@umn.edu}}}

\address{Department of Mathematics, University of Minnesota - Twin Cities,  \mbox{Minneapolis, MN, 55455}}
\email{\href{ppylyavs@umn.edu }{{\tt ppylyavs@umn.edu}}}

%% then use \addressmark to match authors to institutions here

\date{}

%\SetArgSty{textnormal}
\begin{document}

\maketitle

\begin{abstract}
We introduce analogs of left and right RSK insertion for Schubert calculus of complete flag varieties. The objects being inserted are certain biwords, the insertion objects are bumpless pipe dreams, and the recording objects are decorated chains in Bruhat order. As an application, we adopt Lenart's growth diagrams of permutations to give a combinatorial rule for Schubert structure constants in the separated descent case. 
\end{abstract}

\section{Introduction}
The cohomology ring $H^*(Fl(\C^n))$ of the complete flag variety of nested vector spaces in $\C^n$ has basis given by the cohomology classes of the Schubert varieties, $[X_w]$. The \emph{Schubert structure constants} are coefficients $c_{w,v}^u$ of the Schubert classes in the expansion of the product of two Schubert classes, $[X_w]\cdot [X_v]=\sum_{u} c_{w,v}^u [X_u]$. The Schubert structure constants $c_{w,v}^u$ bear the geometric interpretation of counting the number of points in a suitable intersection of Schubert varieties determined by $u,v,w$, but its combinatorial interpretation has yet to be understood in full generality. 

A typical approach in algebraic combinatorics to study this problem is through Schubert polynomials $\S_w$, which are polynomial representatives of the Schubert classes and form a basis in the polynomial ring in variables $x_1,x_2,\cdots$. It is known that multiplication of these Schubert polynomials $\S_w\S_v=\sum_{u}c_{w,v}^u \S_u$ give rise to the same structure constants as the cohomology classes. 

An important special case is when the permutations $w$ and $v$ both have a single descent at some position $k$. In this case, the Schubert structure constants are known as the Littlewood-Richardson coefficients. There are many known combinatorial interpretations for the Littlewood-Richardson coefficients and several different directions for generalizing this case. Kogan \cite{kogan2000schubert} generalized it to the case when $v$ has a single descent at position $k$, and  $w$ either has no descents before position $k$, or no descents after position $k$. Different rules for this case has been given by Knutson-Yong \cite{knutson2004formula} (also generalized to K-theory) and Lenart \cite{lenart2010growth}. Knutson and Zinn-Justin introduced a further generalization to Kogan's case, allowing $v$ and $w$ to have ``separated descents'', and gave a puzzle-based rule \cite{allentalk}. The first author also gave a combinatorial rule for the separated descent case using tableaux and bumpless pipe dreams \cite{huang2021schubert}. The main technique there is to generalize jeu de taquin on semi-standard Young tableaux to bumpless pipe dreams.

In this paper, we employ different techniques as compared to \cite{huang2021schubert} that generalize the classical Schensted's insertion algorithm and RSK correspondence to the complete flag variety case, using bumpless pipe dreams and decorated chains in Bruhat order. Our insertion algorithms come from understanding Monk's rule for Schubert polynomials bijectively using bumpless pipe dreams. More specifically, we give two different bijective proofs of Monk's rule that generalize the row and column insertions on semi-standard Young tableaux.  The right insertion algorithm on bumpless pipe dreams is an extension of the first author's bijective proof of the one-variable version of Monk's rule \cite{huang2021bijective}, and the left insertion algorithm presented here is new. Bijective proofs of Monk's rule have been presented with pipe dreams (RC-graphs) in \cite{bergeron1993rc} and \cite{billey2019bijective}. Under the bijection given in \cite{gao2021canonical}, we conjecture that our left insertion algorithm on bumpless pipe dreams corresponds to the insertion algorithm on pipe dreams for Monk's rule given in \cite{bergeron1993rc}, and it was shown in \cite{gao2021canonical} that the algorithm in \cite{huang2021bijective} corresponds to the algorithm in \cite{billey2019bijective}. Therefore, our story about RSK could conceivably be told in terms of pipe dreams as well as bumpless pipe dreams. We work out the bumpless pipe dream version in full technical detail. 

In classical RSK correspondence, the right (row) and left (column) insertions always commute. However, this does not hold in general for our Monk's rule based left and right insertions. Theorem~\ref{thm:comm} is our major technical result regarding the commutativity of the left and right insertions.  We show that when certain conditions based on descents are satisfied, the left and right insertions commute, and this commutativity generalizes the commutativity in the usual Grassmannian case. Furthermore, we show that the commutativity in this special case can be used as a tool to obtain a combinatorial rule for the Schubert structure constants in the separated descent case. For this, we adopt Lenart's growth diagram of permutations and show that in the separated descent case, we can construct growth diagrams that propagate the local criterion for commutativity of insertions globally. Our combinatorial interpretation for Schubert structure constants $c_{w,v}^u$ in the separated descent case (Theorem~\ref{thm:rule}) is the number of certain growth diagrams determined by $u,v,w$, or equivalently, the number of certain saturated chains in (mixed) $k$-Bruhat order from $w$ to $u$ determined by $v$. We note that our technique is substantially different than the techniques used in \cite{lenart2010growth}. There, the author studied the interplay between the growth rules and a monoid structure of certan words of transpositions. In particular, this approach does not give a bijective correspondence on the level of monomials of the Schubert polynomials, but our insertion-based approach does.

From the perspective of our approach, we may understand that the difficulty of the Schubert problem manifests itself as the non-commutativity of the left and right insertion algorithms. Alternatively one can think of the difficulty as non-associativity of a potential analog of plactic monoid. Therefore, a deeper understanding of the relations of the left and right insertions could lead us to more insights to compute  Schubert products. On the other hand, the combinatorial rule for the structure constants is stated purely in terms of growth diagrams, without needing to mention bumpless pipe dreams. Perhaps its intrinsic nature deserves to be studied more extensively. 

\section{Background}
\subsection{Classical RSK correspondence}
The classical RSK correspondence is an important algorithm playing a role in Schubert calculus and  representation theory. The algorithm starts with words in alphabet $1, \ldots, n$ and produces as an output two tableaux: a semistandard insertion tableau, and a standard recording tableau. For the definitions of those terms and for the details of the algorithm we refer the reader to \cite[Chapter 7]{EC2}. What we want to emphasize is the following property of the RSK: as one considers all words with a fixed recording tableaux, the sums of monomial weights of those words give Schur functions, which represent Schubert classes in the cohomology rings of Grassmannians. In this paper we shall consider two generalizations of the classical RSK that will have a similar property. Specifically, summing weights of all biwords that have a fixed recording chain produces Schubert polynomials, see Theorems \ref{thm:leftRSK} and \ref{thm:rightRSK}

\subsection{Schubert polynomials and bumpless pipe dreams}

Bumpless pipe dreams were introduced by Lam, Lee and Shimozono in \cite{LLS}.
For a permutation $\pi\in S_{\infty}=\bigcup_{n=1}^\infty S_n$, a (reduced) \textit{bumpless pipe dream} $D$ of $\pi$ is a tiling of the square grid $\Z_{>0}\times\Z_{>0}$ using the following six tiles, $\+, \htile, \vtile, \jt, \rt, \bl$, such that the tiling forms pipes indexed by $\Z_{>0}$ where pipe $i$ travels from $(\infty,i)$ to $(\pi(i),\infty)$ towards the NE direction and that no two pipes cross twice.  

We typically draw a bumpless pipe dream in a square grid of size $n\times n$ if $\pi\in S_n$. Let $\BPD(\pi)$ denote the set of bumpless pipe dreams of $\pi$, and for $D\in\BPD(\pi)$, let  $\blank(D)$ denote the coordinates of its \bl-tiles. Then the \textit{weight} of $D$ is defined as $\wt(D):=\prod_{(i,j)\in\blank(D)}x_i.$

Bumpless pipe dreams give a combinatorial model for Schubert polynomials. It is shown in \cite{LLS} that for any permutation $\pi$, \[\S_\pi=\sum_{D\in\BPD(\pi)}\wt(D).\]
We will use this formula as our definition for Schubert polynomials in this paper.

\begin{ex}
Figure~\ref{fig:bpd31524} shows the set of bumpless pipe dreams for 31524. From these we have
\[\S_{31524}=x_1^2x_3^2+x_1^2x_2x_3+x_1^2x_2^2+x_1^3x_3+x_1^3x_2.\]

\begin{figure}[h!]
    \centering
    \includegraphics[scale=0.6]{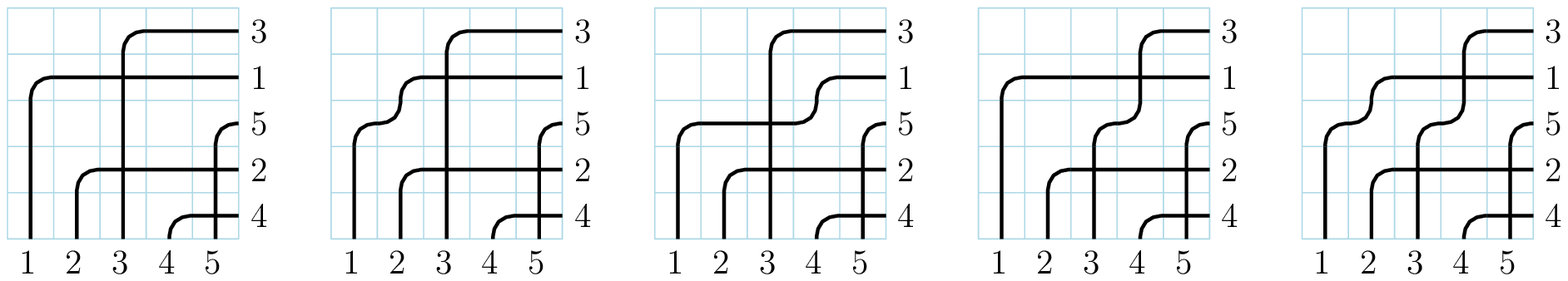}
    \caption{$\BPD(31524)$}
    \label{fig:bpd31524}
\end{figure}
\end{ex}
We state Monk's rule for Schubert polynomials \cite{monk1959geometry}. For a general permutation $\pi$ and a simple reflection $s_k$,
\[
    \S_{s_k}\S_\pi = (x_1+\cdots +x_k)\S_\pi 
    = \sum_{\substack{\pi\,t_{ab}\\a\le k<b\\ \pi\,t_{ab}\gtrdot \pi}}\S_{\pi\,t_{ab}}.
\]
The left and right insertion algorithms in Section~\ref{sec:ins} come from two different bijective interpretations of this identity.
\subsection{$k$-Bruhat order and $k$-growth diagrams}
Let $\pi$ and $\rho$ be permutations. We say $\pi$ covers $\rho$ in $k$-Bruhat order, written $\pi \gtrdot_k \rho$ if there exist $\alpha\le k<\beta$ such that $\pi = \rho\, t_{\alpha\beta}$, where $t_{\alpha\beta}$ denotes the transposition of $\alpha$ and $\beta$. A \emph{mixed $k$-chain} for a permutation $\pi$ is a chain of permutations from the identity to $\pi$, each covers the predecessor in $k$-Bruhat order for some $k$, namely,
\[(\id\lessdot_{k_1} \pi_1 \lessdot_{k_2}\cdots \lessdot_{k_\ell}\pi_\ell=\pi).\]

\begin{ex}
An example of a mixed $k$-chain for the permutation 15324 is
\[1234\lessdot_2 1324 \lessdot_3 1342 \lessdot_4 13524 \lessdot_2 15324.\]
\end{ex}

Following \cite{lenart2010growth}, we define a $k$-growth diagram to be a matrix of permutations $\pi_{i,j}$ with $0\le i \le m$, $0\le j \le n$, together with positive integers $k_i$ for each $1\le i\le m$, $l_j$ for each $1\le j\le n$,
subject to the conditions that

\begin{enumerate}[(a)]
    \item $\pi_{0,0}=\id$
    \item for each $1\le i\le m$, $\pi_{i,j}\gtrdot_{k_i} \pi_{i-1,j}$ for all $j$;
    \item for each $1\le j\le n$, $\pi_{i,j}\gtrdot_{l_j}\pi_{i,j-1}$ for all $i$;
    \item each square satisfies the property that if $x$ is the unique permutation in the open Bruhat interval $(\pi_{i-1,j-1}, \pi_{i,j})$ different from $\pi_{i,j-1}$ and $x\lessdot_{k_i} \pi_{i,j}$ and $x\gtrdot_{l_j} \pi_{i-1,j-1}$, then $\pi_{i-1,j}=x$; if $x$ does not exist, $\pi_{i-1,j}=\pi_{i,j-1}$.
    
\end{enumerate}
    The picture below shows a generic square of a growth diagram.
    \[\begin{tikzcd}
	\pi_{i-1,j} & \pi_{i,j} \\
	\pi_{i-1,j-1}& \pi_{i,j-1}
	\arrow["k_i"', no head, from=2-1, to=2-2]
	\arrow["l_j", no head, from=2-1, to=1-1]
	\arrow["k_i", no head, from=1-1, to=1-2]
	\arrow["l_j", no head, from=1-2, to=2-2]
\end{tikzcd}\]

\begin{ex}
Figure~\ref{fig:growthdiagram} shows an example of a growth diagram.

\begin{figure}[h!]
    \centering
    \[\begin{tikzcd}
	1342 & 1432 & 15324 \\
	1243 & 1342 & 13524 \\
	1234 & 1324 & 1342
	\arrow["2", no head, from=3-1, to=3-2]
	\arrow["3", no head, from=3-2, to=3-3]
	\arrow["3", no head, from=3-3, to=2-3]
	\arrow["3", no head, from=3-2, to=2-2]
	\arrow["3", no head, from=3-1, to=2-1]
	\arrow["2", no head, from=2-1, to=1-1]
	\arrow["2", no head, from=2-2, to=1-2]
	\arrow["2", no head, from=2-3, to=1-3]
	\arrow["3", no head, from=1-2, to=1-3]
	\arrow["2", no head, from=1-1, to=1-2]
	\arrow["2", no head, from=2-1, to=2-2]
	\arrow["3", no head, from=2-2, to=2-3]
\end{tikzcd}\]
    \caption{A $k$-growth diagram}
    \label{fig:growthdiagram}
\end{figure}
\end{ex}

It is shown in \cite{lenart2010growth} that growth diagrams are well-defined. By definition, given any chain
$\mathbf{c}=(\id \lessdot_{k_1}\cdots \lessdot_{k_m} w_m=w)$ from the identity to $w$, and any chain $\mathbf{d}=(w=u_0\lessdot_{l_1} u_1\lessdot_{l_2} \cdots \lessdot_{l_n} u_n=u)$ from $w$ to $u$, there is a unique growth diagram $(\pi_{i,j}, k_i, l_j)_{i\in[1,m], j\in [1,n]}$ such that $\pi_{i,0}=w_i$ and $\pi_{m,j}=u_j$ for each $i,j$. We denote the leftmost vertical chain $(\id \lessdot_{l_1} \pi_{0,1}\lessdot_{l_2}\cdots \lessdot_{l_n}\pi_{0,n})$ as $\jdt_\mathbf{c}(\mathbf{d})$. 
For example, Figure~\ref{fig:growthdiagram} shows that if $\mathbf{d}=(1342\lessdot_3 13524 \lessdot_2 15324)$ and $\mathbf{c}=(1234\lessdot_2 1324\lessdot_3 1342)$, then $\jdt_\mathbf{c}(\mathbf{d})=(1234\lessdot_3 1243\lessdot_2 1342)$.

\section{The Left and Right Insertion Algorithms}
\label{sec:ins}
 
\begin{defn}[Basic Monk moves \cite{huang2021bijective}]\label{def:basic-monk-moves}
We define   $\mindroop$, $\cbswap$, and $\term$, on bumpless pipe dreams that allow a single $\bt$. See Figure \ref{fig:basic-monk-moves}.
\vskip 0.5em

\noindent $\mindroop$: Let $(a,b)$ be the position of an \rt-turn of a pipe $p$. Note that the tile at $(a,b)$ could be a $\rt$ or $\bt$. Let $x>0$ be the smallest number where $(a+x,b)$ is not a \+, and $y>0$ be the smallest number where $(a,b+y)$ is not a \+. A $\mindroop$ at $(a,b)$ droops $p$ into $(a+x,b+y)$. We let $\droop(a,b):=(a+x,b+y)$. The map $\droop$ is only defined at tiles where the move $\mindroop$ is possible. We may also define its inverse, $\minundroop$, and the partial map on coordinates, $\undroop$, in a similar fashion.
\vskip 0.5em

Note on notation scheme: we use the notation $\mathsf{map}_D(a,b)$ to denote the resulting BPD after performing the operation defined by $\mathsf{map}$, and $\operatorname{map}_D(a,b)$ to denote a particular coordinate computed by performing $\mathsf{map}$. When the BPD $D$ is clear from context, we may drop the subscript. 
\vskip 0.5em

\noindent $\cbswap$: Suppose $(a,b)$ is a $\bt$ of pipes $p$ and $q$, and $p$ and $q$ also have a crossing at $(a',b')$. Then a $\cbswap$ move at $(a,b)$ swaps the two tiles at $(a,b)$ and $(a',b')$. We let $\swap(a,b):=(a',b')$. The map $\swap$ is only defined at tiles where the move $\cbswap$ is possible. 

\vskip 0.5em
\noindent $\term$: Suppose $(a,b)$ is a $\bt$ of non-crossing pipes $p$ and $q$. Then $\term$ at $(a,b)$ replaces the $\bt$ with a $\+$.
%\daoji{
%Do I need this?} Furthermore, we consider the initial move that replaces a \+-tile with a \bt-tile when computing $m_{s,\beta}$ \pasha{what is $m_{s,\beta}$?}, and the final move that replaces a \bt-tile with a \+-tile also as special basic Monk moves.

\begin{figure}[h!]
    \centering
    \includegraphics[scale=0.6]{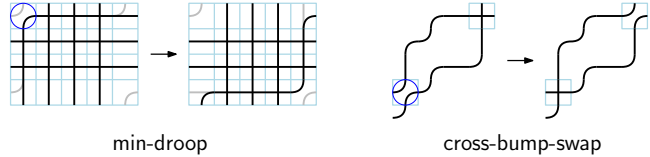}
    \caption{$\mindroop$ and $\cbswap$ at the circled coordinates}
    \label{fig:basic-monk-moves}
\end{figure}
\end{defn}

\begin{defn}
We define a \emph{biletter} as a pair of integers $(b,k)$ with $b\le k$, and for convenience write it as $b_k$.  A \emph{biword} is a word $(b_1)_{k_1}\cdots (b_\ell)_{k_\ell}$ with each entry a biletter. 
\end{defn}
%\pasha{is it too late to change notation to something like $b^1_{k_1}$ or $\prescript{}{k_1}{b_1}$?}
\subsection{Left insertion}
Let $D\in \BPD(\pi)$ and $b_k$ be a biletter. We define the left-insertion algorithm as follows that produces $(b_k\rightarrow D)\in \BPD(\pi t_{\alpha,\beta})$ where $\alpha \le k < \beta$ as follows. Let $(i, j)$ be the position of the $\rt$ with on the row $b$ with $j$ minimal.
\begin{enumerate}[(1)]
    \item  Perform a $\mindroop$ at $(i,j)$ and let $(i_1, j_1)=\droop(i,j)$.
    \item If $(i_1,j_1)$ is a $\jt$, we check whether $i_1\le k$.
     
    \begin{enumerate}
        \item If so, let $(i_1, j_2)$ be the position of the $\rt$ with $j_2>j_1$ minimal.  Update $(i,j)$ to be $(i_1, j_2)$ and go to step (1). 
        \item Otherwise, let $(i_1, j_0)$ be the position of the $\rt$ with $j_0<j_1$ maximal. Update $(i,j)$ to be $(i_1, j_0)$ and go to step (1). 
    \end{enumerate}
    \item Otherwise $(i_1,j_1)$ is a $\bt$. 
    \begin{enumerate}
        \item If the pipe that has a $\rt$-turn at this tile exits from row $r$ with $r\le k$, we update $(i,j)$ to be $(i_1,j_1)$ and go back to step (1).
        \item Otherwise, we check whether the two pipes passing through $(i_1,j_1)$ have already crossed. If so, notice that the crossing must be below row $i_1$. In this case perform a $\cbswap$, update $(i,j)$ to be
        $\swap(i_1,j_1)$, and go back to step (1). If not, replace the tile at $(i_1,j_1)$ with a $\+$ and terminate the algorithm.
    \end{enumerate}
\end{enumerate}

The inverse of this algorithm is described as follows. Given inputs $(\pi, \rho, k, E\in \BPD(\rho))$ such that $\rho=\pi t_{\alpha \beta}$  where $\alpha\le k < \beta$ and $\ell(\pi t_{\alpha,\beta}) = \ell(\pi) + 1$, let $(i,j)$ be the position of the tile where pipes $\pi(\alpha)$ and $\pi(\beta)$ cross. We begin by replacing this tile with a $\bt$.
\begin{enumerate}[(1)]
    \item Perform a $\minundroop$ at $(i,j)$ and let $(i_1,j_1)=\undroop(i,j)$.
    \item If $(i_1,j_1)$ is a $\rt$, we check whether $i_1\le k$.

    \begin{enumerate}[(a)]
        \item If yes, we check whether there exists a $\jt$ with coordinate $(i_1,j_2)$ for some $j_2<j_1$. If it exists, take $j_2$ to be maximal, update $(i,j)$ to be $(i_1,j_2)$ and go to step (1). If it does not exist, we output the biletter $(i_1)_k$ and terminate the algorithm.  
        \item Otherwise, we let $(i_1,j_3)$ be the $\jt$ with $j_3>j_1$ minimal, update $(i,j)$ to be $(i_1,j_3)$, and go to step (1). 
    \end{enumerate}
     
    \item Otherwise $(i_1,j_1)$ is a $\bt$. 
    \begin{enumerate}
        \item If the pipe that has a $\jt$-turn at this tile exits from row $r$ with $r\le k$, update $(i, j)$ to be $(i_1,j_1)$ and go to step (1). 
        \item Otherwise, the two pipes that pass through $(i_1,j_1)$ must cross somewhere above row $i_1$. In this case we perform a $\cbswap$, update $(i,j)$ to be $\swap(i_1,j_1)$, and go to step (1).
    \end{enumerate}
\end{enumerate}

\begin{ex} \label{ex:lins}
We show two examples of left insertions.

Figure \ref{fig:lins1} shows the left insertion of the biletter $2_3$ into a BPD of 13254. 
The insertion  algorithm triggers steps (1), (3)(a), (1), (3)(a), (1), (3)(b) in order. The reverse algorithm triggers steps (1), (3)(a), (1), (3)(a), (2)(a) in order.

Figure \ref{fig:lins2} shows the left insertion of the biletter $1_2$ into a BPD of 12543. The insertion algorithm triggers steps (1), (2)(a), (1), (2)(b), (1), (3)(b) in order. The reverse algorithm triggers steps (1), (2)(b), (1), (2)(a), (1), (2)(a) in order.
\begin{figure}[h!]
    \centering
    \includegraphics[scale=0.6]{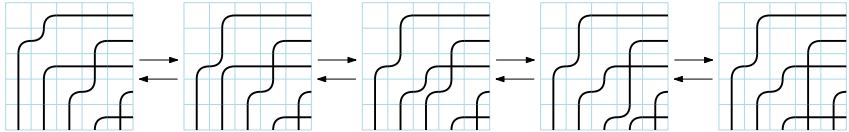}
    \caption{Left insertion of the biletter $2_3$ into a BPD of 13254. }
    \label{fig:lins1}
\end{figure}

\begin{figure}[h!]
    \centering
    \includegraphics[scale=0.6]{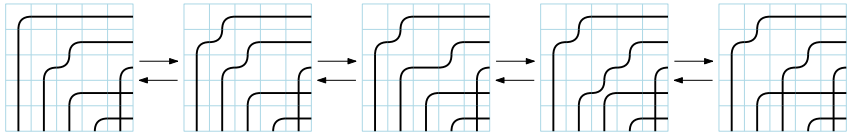}
    \caption{Left insertion of $1_2$ into a BPD of 12534.}
    \label{fig:lins2}
\end{figure}
\end{ex}

\begin{prop} \label{prop:left}
The two algorithms described above are indeed inverses of each other.
\end{prop}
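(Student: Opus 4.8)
The plan is to show that the forward left-insertion algorithm and the described inverse algorithm are step-by-step reverses of one another. Since both procedures are iterative loops that track a "moving" coordinate $(i,j)$ and alternate between droop-type moves and cross-bump-swap/terminate decisions, the natural approach is to set up a bijection between the sequences of intermediate states produced by the two algorithms. Concretely, I would first establish that each elementary move used in the forward algorithm has a well-defined inverse that appears in the reverse algorithm: $\mindroop$ at $(i,j)$ landing at $(i_1,j_1)=\droop(i,j)$ is undone by $\minundroop$ at $(i_1,j_1)$ with $\undroop(i_1,j_1)=(i,j)$ (this is immediate from Definition~\ref{def:basic-monk-moves}, where $\minundroop$ is declared to be the inverse of $\mindroop$); and $\cbswap$ is an involution on the pair of tiles it swaps, so $\swap(\swap(a,b))$ returns the bump tile. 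The $\term$ step, which replaces a $\bt$ by a $\+$ at the end of the forward run, is inverted by the initialization of the reverse algorithm, which replaces the $\+$ where pipes $\pi(\alpha)$ and $\pi(\beta)$ cross by a $\bt$.

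Next I would argue that the branching logic matches up. In the forward algorithm, after a $\mindroop$ we land on either a $\jt$ or a $\bt$; in the reverse algorithm, after a $\minundroop$ we land on either a $\rt$ or a $\bt$. I would check that performing $\minundroop$ immediately after a forward $\mindroop$ recreates exactly the tile configuration that was present before that $\mindroop$, so that a forward step landing on a $\jt$ corresponds in reverse to a step that departs from the $\rt$-turn that the $\mindroop$ had just created, and vice versa. The case split on whether $i_1\le k$ is literally the same predicate in both algorithms, so the key is to verify that the ``move $(i,j)$ to the next $\rt$ with $j_2>j_1$'' in forward step (2)(a) is undone by ``move to the $\jt$ with $j_3>j_1$ minimal'' in reverse step (2)(b), etc.; this is a matter of observing that a $\mindroop$ converts a particular $\rt$-turn into a $\jt$ and shifts the active row's tiles predictably, so the "next/previous $\rt$" and "next/previous $\jt$" bookkeeping on that row are inverse to each other. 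Similarly, forward step (3)(a) — moving into a $\bt$ whose $\rt$-turning pipe exits from a row $\le k$ and continuing — must be matched with reverse step (3)(a), and the "already crossed / not yet crossed" dichotomy of forward (3)(b) must be matched with the corresponding $\cbswap$-or-stop behavior of reverse (3)(b), using that a forward $\cbswap$ turns a below-row crossing into the current bump while the reverse $\cbswap$ operates on an above-row crossing.

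To make this rigorous without drowning in cases, I would prove a single lemma: \emph{if $D \xrightarrow{\text{one forward step}} D'$ with active coordinate moving from $(i,j)$ to $(i',j')$, then one reverse step applied to $D'$ with active coordinate $(i',j')$ returns to $D$ with active coordinate $(i,j)$, and moreover the termination test triggers in the reverse algorithm at $D'$ exactly when the forward algorithm did not terminate at the step producing $D'$.} Given this lemma, an induction on the number of steps shows the full algorithms are inverse: the reverse algorithm, started from the output BPD with its canonical initial coordinate (the crossing of pipes $\pi(\alpha)$ and $\pi(\beta)$), retraces the forward run backward and outputs the original biletter $(b)_k$ — here one also checks that the initial $\rt$-turn selection in the forward algorithm (leftmost $\rt$ on row $b$) is exactly the terminal condition detected by reverse step (2)(a), "no $\jt$ with $j_2<j_1$ exists on row $i_1$," producing biletter $(i_1)_k = b_k$.

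The main obstacle I anticipate is the bookkeeping around how a $\mindroop$/$\minundroop$ rearranges the tiles \emph{along the active row and column} — in particular verifying that the "next $\rt$ to the right," "previous $\rt$ to the left," "next $\jt$," etc., are genuinely inverse selection rules after the droop, and that the guard conditions ($i_1\le k$ versus $i_1 > k$, and which pipe has the $\rt$-turn versus $\jt$-turn at a bump) are preserved under reversal. A secondary subtlety is ensuring that the reverse algorithm never gets "stuck" or exits the grid — i.e., that the input hypotheses $\rho=\pi t_{\alpha\beta}$, $\alpha\le k<\beta$, $\ell(\rho)=\ell(\pi)+1$ guarantee that the crossing of $\pi(\alpha)$ and $\pi(\beta)$ exists and that the below/above-row crossing claims in step (3) actually hold; these should follow from the reducedness of the BPDs and the length condition, but they need to be spelled out. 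Everything else is a routine, if lengthy, verification of matching cases in the two algorithm definitions.
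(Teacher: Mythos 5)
Your proposal is correct and takes essentially the same route as the paper: both arguments reduce to checking that each elementary step ($\mindroop$/$\minundroop$, $\cbswap$, and the $\term$/initialization pair) inverts the corresponding step of the other algorithm, with the branching predicates matching case by case. The paper packages the invariant you call ``guard conditions are preserved under reversal'' into an explicit notion of \emph{activated BPD} (in particular, that the pipe at the active elbow always exits in a row $\le k$), which is the one piece of bookkeeping worth making explicit if you flesh out your single-step lemma.
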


\begin{proof}
For a given $k$, define {\it {activated BPD}} to be a BPD where 
\begin{itemize}
    \item one of the tiles is allowed to be a $\bt$;
    \item one of the tiles of the form $\rt$, $\jt$, or $\bt$ is considered {\it {active}};
    \item if a tile of the form $\bt$ is present, it must be the active one;
    \item the pipe in the active $\jt$ or the upper wire in the $\bt$ exit in row $\leq k$.
\end{itemize}
It is easy to check that all intermediate steps in the algorithms are activated BPDs, where the active tiles is the tile next step is being applied to. Now one can verify that the two algorithms undo each other's steps. 

First, locally  $\minundroop$ and $\mindroop$ are inverses of each other. 

Next, the first step of the inverse algorithm reverses the creation of $\+$ on the last step of the forward algorithm, and vice versa. Note that the condition on two pipes passing through active tile not having any crossings yet corresponds to the condition $\ell(\pi t_{\alpha,\beta}) = \ell(\pi) + 1$ we require at the beginning of the inverse algorithm. 

Step (2a) of the inverse algorithm, when non-terminating, is the reverse of  step (2a) of the forward algorithm, which occurs exactly when the row index of the active tile is $\leq k$. Steps (2b) of both algorithms are inverses of each other. 

Steps (3a) of both algorithms are inverses of each other. Finally, $\cbswap$ operations in steps (3b) of both algorithms are inverses of each other, since both occur when one of the pipes in the active tile exits in a row $>k$.
\end{proof}

Given a biword $Q$, we define a map $\mathcal{L}$ that maps $Q$ to a pair of bumpless pipe dream and a mixed $k$-chain in $k$-Bruhat order. We call this chain the \emph{recording chain for left insertion} of the word $Q$. \[\mathcal{L}(Q):=\begin{cases} (I,\id) & \text{ if } Q\text{ is the empty biword}\\
(b_k\rightarrow D, \mathbf{c}\lessdot_k \perm (b_k\rightarrow D))&\text{ if }Q=b_kQ' \text{ and }\mathcal{L}(Q')=(D, \mathbf{c})\end{cases}.\]
Here $I$ denotes the bumpless pipe dream for the identity permutation.

\begin{ex}
Figure \ref{fig:leftRSK} shows the insertion procedure to compute $\mathcal{L}(1_12_31_22_4)$. 
\begin{figure}[h!]
    \centering
    \includegraphics[scale=0.6]{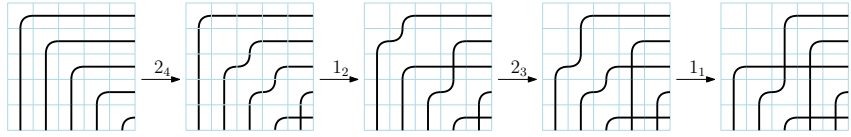}
    \caption{Left insertion of $1_12_31_22_4$}
    \label{fig:leftRSK}
\end{figure}

The recording chain is 
\[12345 \lessdot_4 12354 \lessdot_2 13254 \lessdot_3 14253 \lessdot_1 24153.
\]
\end{ex}

Conversely, given a bumpless pipe dream $D\in\BPD(\pi)$ with $\ell=\ell(\pi)$ and a mixed $k$-chain $\mathbf{c}=(\id\lessdot_{k_1} \pi_1\lessdot \cdots \lessdot_{k_{\ell}}\pi_\ell=\pi)$, we can define
$\overline{\mathcal{L}}(D,\mathbf{c}):=b_k \overline{\mathcal{L}}(D',\mathbf{c}')$ where $b_k$ and $D'$ are the results of the inverse of the left insertion  on the inputs $(\pi_{\ell-1}, \pi, k_{\ell}, D)$,  $\mathbf{c}'=(\id\lessdot_{k_1} \pi_1\lessdot \cdots \lessdot_{k_{\ell-1}}\pi_{\ell-1})$, and $\overline{\mathcal{L}}(I)=\emptyset$, the empty biword. It's clear from construction that $\mathcal{L}$ and $\overline{\mathcal{L}}$ are inverses of each other.

Define the \emph{weight} of a biword $Q=(b_1)_{k_1}\cdots (b_\ell)_{k_\ell}$ as $wt(Q):=\prod x_{b_i}^{\# b_i}$ where $\# b_i$ denotes the number of occurrences of $b_i$ in $Q$ (regardless of the subscripts $k_i$'s). The following theorem relates biwords and Schubert polynomials.
\begin{thm}
\label{thm:leftRSK}
Fix a mixed $k$-chain $\mathbf{c}$ for the permutation $\pi$. The Schubert polynomial $\S_\pi$ is the sum of the weights of all biwords $Q$ whose recording chain for left insertion is $\mathbf{c}$.
\end{thm}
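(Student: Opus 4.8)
The statement to prove is Theorem~\ref{thm:leftRSK}: for a fixed mixed $k$-chain $\mathbf{c}$ for $\pi$, the Schubert polynomial $\S_\pi$ equals $\sum_Q \wt(Q)$, the sum over biwords $Q$ whose recording chain for left insertion is $\mathbf{c}$. The natural approach is to induct on the length $\ell = \ell(\pi)$, using the bijection $\mathcal{L} \leftrightarrow \overline{\mathcal{L}}$ together with the fact, already asserted in the excerpt, that left insertion of a biletter $b_k$ into $D \in \BPD(\pi)$ gives an element of $\BPD(\pi t_{\alpha\beta})$ with $\alpha \le k < \beta$ and $\ell(\pi t_{\alpha\beta}) = \ell(\pi)+1$. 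Thus a single insertion step realizes one cover in $k$-Bruhat order, and the recording chain records exactly the sequence of reflections and descent positions.

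\emph{First}, I would set up the induction. The base case $\ell = 0$ is immediate: $\pi = \id$, the only chain is the empty chain, the only biword is empty, and $\S_{\id} = 1 = \wt(\emptyset)$. For the inductive step, write $\mathbf{c} = (\mathbf{c}' \lessdot_{k} \pi)$ where $\mathbf{c}'$ is a mixed chain for some $\pi'$ with $\pi = \pi' t_{\alpha\beta}$, $\alpha \le k < \beta$, $\ell(\pi) = \ell(\pi')+1$. By the definition of $\mathcal{L}$, a biword $Q$ has recording chain $\mathbf{c}$ if and only if $Q = b_k Q'$ where the recording chain of $Q'$ is $\mathbf{c}'$, $b$ is chosen so that $\perm(b_k \to (\text{BPD of }Q')) = \pi$, and (crucially) the biletter $b_k$ can be validly left-inserted with that specific $k$. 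So I would partition the biwords with recording chain $\mathbf{c}$ according to their first letter's row index $b$, and observe $\wt(b_k Q') = x_b \cdot \wt(Q')$.

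\emph{Second} — and this is where the real content lies — I need: for each $D' \in \BPD(\pi')$, the left insertions $b_k \to D'$ that land in $\BPD(\pi)$ (with descent position $k$) are in bijection with the \bl-tiles in some row of $D'$, in such a way that summing $x_b$ over the valid row indices $b$, then over all $D' \in \BPD(\pi')$, reproduces $(x_1 + \cdots + x_k)\,\S_{\pi'}$ restricted to the $\S_\pi$-coefficient. Concretely: the inverse left-insertion map, applied to all of $\BPD(\pi)$ with transposition $t_{\alpha\beta}$ and descent $k$, produces pairs $(b, D')$ with $D' \in \BPD(\pi')$; I must show this is a bijection onto $\{(b,D') : b \le k,\ (b,\cdot)\text{-something}\}$ matching the monomial expansion. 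The cleanest route is to invoke Proposition~\ref{prop:left} (the two algorithms are mutual inverses) to get that $\overline{\mathcal{L}}$ gives a bijection between $\{D \in \BPD(\pi)\}$ and $\{(b_k, D') : D' \in \BPD(\pi'),\ b_k \to D' \text{ valid with this } k\}$, hence $\sum_{D \in \BPD(\pi)} \wt(D) = \sum_{D'} \sum_{b : \text{valid}} x_b \wt(D')$. Then I must identify $\sum_{b:\text{valid}} x_b$, summed appropriately, with the Monk's-rule right-hand side: this is exactly the claim that left insertion is a bijective proof of Monk's rule $(x_1 + \cdots + x_k)\S_{\pi'} = \sum_{\pi' t_{ab} \gtrdot \pi',\, a \le k < b} \S_{\pi' t_{ab}}$. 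In fact, once one knows left insertion is such a bijective proof — inserting $b_k$ with $b$ ranging over $1,\dots,k$ (with multiplicity given by the \bl-tiles) multiplies $\S_{\pi'}$ by $x_1 + \cdots + x_k$ and distributes the result among the $\BPD(\pi'' )$ for the various covers $\pi'' \gtrdot_k \pi'$ — the theorem follows by summing $\wt$ over $\BPD(\pi)$ and matching the coefficient of each monomial.

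\emph{The main obstacle} is precisely establishing that the left-insertion algorithm, as a map on the \emph{multiset} $\{(b,D') : 1 \le b \le k,\ (b,b,\dots)\ \text{with the row-}b\ \text{\bl-tile structure}\}$, is a well-defined bijection onto $\coprod_{\pi'' \gtrdot_k \pi'} \BPD(\pi'')$ that is weight-preserving in the sense $\wt(b_k \to D') = x_b\,\wt(D')$ — i.e., that inserting $b_k$ creates exactly one new \bl-tile in row $b$ and changes the permutation by one $k$-Bruhat cover. This requires tracking, through steps (1)--(3) of the algorithm, that the \mindroop/\cbswap/\term moves preserve reducedness, terminate, add precisely one blank in the starting row $b$, and produce the claimed transposition; the termination and the "exactly one cover" claims are the delicate points, and they are what legitimizes reading the theorem off Monk's rule. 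I would either cite the first author's bijective proof of the one-variable Monk's rule \cite{huang2021bijective} for the structural backbone and verify the $k$-variable refinement here, or prove the weight/length bookkeeping directly by induction on the number of algorithm steps, checking each of the six cases in the case analysis of the algorithm. Given Proposition~\ref{prop:left} is already in hand, the bijectivity is settled; what remains is the weight and $k$-Bruhat-cover accounting, which is routine but must be done carefully.
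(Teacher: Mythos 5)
Your proposal is correct and is essentially the paper's argument: the paper's entire proof consists of observing that $\overline{\mathcal{L}}(\cdot,\mathbf{c})$ is a (weight-preserving) bijection between $\BPD(\pi)$ and the set of biwords with recording chain $\mathbf{c}$, which combined with the definition $\S_\pi=\sum_{D\in\BPD(\pi)}\wt(D)$ immediately gives the theorem. Your induction on $\ell(\pi)$ and the detour through matching coefficients in Monk's rule are unnecessary scaffolding around that same bijection (for a \emph{fixed} chain one never needs to sum over all covers $\pi''\gtrdot_k\pi'$), though you correctly isolate the content the paper leaves implicit, namely that inserting $b_k$ adds exactly one blank in row $b$ and realizes exactly one $k$-Bruhat cover.
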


\begin{proof}
For a fixed mixed $k$-chain $\mathbf{c}$ for the permutation $\pi$, $\BPD(\pi)$ and the set of biwords whose recording chains are $\mathbf{c}$ are in bijection with the via the map $\overline{\mathcal{L}}(\cdot, \mathbf{c})$. 
\end{proof}

\begin{ex}
Let $\mathbf{c}=1234\lessdot_3 1243\lessdot_2 1342\lessdot_2 1432$ be a mixed $k$-chain for 1432. Then $\{2_2 2_2 3_3, 1_2 2_2 3_3, 1_2 1_2 3_3, 2_2 2_2 1_3, 1_2 2_2 1_3\}$ is the set of biwords whose recording chain for left insertion is $\mathbf{c}$. The sum of the weights of these biwords is $x_2^2x_3+x_1x_2x_3+x_1^2x_3+x_1x_2^2+x_1^2x_2=\S_{1432}$.
\end{ex}

\subsection{Right insertion} Let $D\in \BPD(\pi)$ and $b_k$ be a biletter. We define the right-insertion algorithm that produces $(D \leftarrow b_k)\in \BPD(\pi t_{\alpha,\beta})$ where $\alpha \le k < \beta$ as follows. Let $(i, j)$ be the position of the $\rt$ with on the row $b$ with $j$ maximal.  
\begin{enumerate}[(1)]
\item  Perform a $\mindroop$ at $(i,j)$ and let $(i_1, j_1)=\droop(i,j)$. 
\item If $(i_1,j_1)$ is a $\jt$, let $(i_1,j_2)$ to be the $\rt$ with $j_2<j_1$ maximal. Update $(i,j)$ to be $(i_1,j_2)$ to be $(i,j)$ and go back to the beginning of step (1).
\item If $(i_1,j_1)$ is a $\bt$, we check whether the two pipes passing through this tile have already crossed.
\begin{enumerate}
    \item If yes, perform a $\cbswap$ and update $(i,j)$ to be $\swap(i_1,j_1)$, and go to step (1).
    \item Otherwise, we let $p$ denote the pipe of the $\rt$-turn of this tile and check if $p$ exits from row $r \le k$. If so, we update $(i,j)$ to be $(i_1,j_1)$ and go to step (1). Otherwise, we replace the tile at $(i_1,j_1)$ with a $\+$ and terminate the algorithm.
\end{enumerate}

\end{enumerate}
The inverse of this algorithm is described as follows. Given inputs $(\pi, \rho, k, E\in \BPD(\rho))$ such that $\rho=\pi t_{\alpha \beta}$  where $\alpha\le k < \beta$ and $\ell(\pi t_{\alpha,\beta}) = \ell(\pi) + 1$, let $(i,j)$ be the position of the tile where pipes $\pi(\alpha)$ and $\pi(\beta)$ cross. We begin by replacing this tile with a $\bt$.
\begin{enumerate}[(1)]
    \item Perform a $\minundroop$ at $(i,j)$ and let $(i_1,j_1)=\undroop(i,j)$.
    \item If $(i_1,j_1)$ is a $\rt$, let $(i_1,j_2)$  be the $\jt$ with $j_2>j_1$ minimal if it exists. If It does not exist, output the biletter $(i_1)_k$ and terminate the algorithm. If $j_2$ is found, update $(i,j)$ to be $(i_1,j_2)$ and go back to the beginning of step (1).
    \item If $(i_1,j_1)$ is a $\bt$, we check whether the two pipes passing through this tile have already crossed.
    \begin{enumerate}
        \item If yes, perform a $\cbswap$ and update $(i,j)$ to be $\swap(i_1,j_1)$ and go to step (1).
        \item Otherwise, we update $(i,j)$ to be $(i_1,j_1)$ and go back to step (1).
    \end{enumerate}
    
\end{enumerate}

\begin{ex} \label{ex:rins}
Figure \ref{fig:rins} shows the right insertion of the biletter $1_4$ into a BPD of 21435. The insertion algorithm triggers steps (1), (3)(a), (1), 3(b), (1), (3)(b) in this order. The reverse algorithm triggers steps (1), (3)(b), (1), (3)(a), (1), (2) in this order.
\begin{figure}[h!]
    \centering
    \includegraphics[scale=0.6]{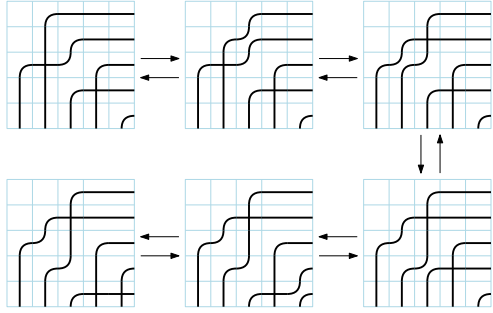}
    \caption{Right insertion of $1_4$ into a BPD of 21435.}
    \label{fig:rins}
\end{figure}
\end{ex}

\begin{prop} \label{prop:right}
The two algorithms described above are indeed inverses of each other.
\end{prop}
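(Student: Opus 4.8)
The plan is to follow the strategy used for Proposition~\ref{prop:left}. I would first introduce a notion of \emph{activated BPD} tailored to right insertion: a tiling that is a BPD except that at most one tile is allowed to be a $\bt$, equipped with a distinguished \emph{active} tile of the form $\rt$, $\jt$, or $\bt$, subject to the conditions that any $\bt$ present must be the active tile and that, whenever the active tile is a $\bt$ whose two pipes have not yet crossed, the pipe making the $\rt$-turn there exits in a row $\le k$. One then checks that every intermediate configuration produced by either the forward or the inverse algorithm is an activated BPD whose active tile is exactly the tile to which the next step applies; as in the left case this is a routine local verification using the descriptions of $\mindroop$, $\minundroop$, and $\cbswap$ from \cite{huang2021bijective}, together with the fact that two pipes of a reduced BPD cross at most once.

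Granting this, the proof reduces to pairing up the steps of the two algorithms and checking that they undo one another. Locally $\mindroop$ and $\minundroop$ are inverse, so step~(1) of each algorithm reverses step~(1) of the other. The replacement of a $\bt$ by a $\+$ in the terminating branch of forward step~(3)(b) is undone by the initial replacement, in the inverse algorithm, of the crossing of the two designated pipes by a $\bt$, and vice versa; here the hypothesis $\ell(\pi t_{\alpha,\beta}) = \ell(\pi)+1$ is exactly the assertion that those two pipes have not yet crossed, which is what distinguishes the terminating branch of step~(3)(b) from its continuing branch. The first action of the forward algorithm --- selecting the $\rt$ on row $b$ with maximal column and drooping it --- is reversed by the terminating branch of inverse step~(2), where no $\jt$ lies to the right of the active $\rt$ and the biletter $(i_1)_k$ with $i_1 = b$ is emitted. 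Forward step~(2), which moves from a $\jt$ to the $\rt$ to its left with maximal column, is inverse to inverse step~(2), which moves from an $\rt$ to the $\jt$ to its right with minimal column; forward and inverse steps~(3)(a) are both a $\cbswap$ at a $\bt$ whose two pipes have already crossed, hence mutually inverse; and the continuing branch of forward step~(3)(b), applied at an uncrossed $\bt$ whose $\rt$-turn pipe exits in a row $\le k$, matches inverse step~(3)(b) verbatim.

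The main obstacle is verifying that the column-selection rules in the $\jt$/$\rt$ steps genuinely pick out mutually inverse tiles: that after the $\mindroop$ performed at the $\rt$ chosen as ``$j_2 < j_1$ maximal'', the tile that was at $(i_1,j_1)$ is still a $\jt$ and is the one the inverse recovers as the $\jt$ with minimal column exceeding $j_2$, and symmetrically when running the inverse. This comes down to pinning down which tiles of row $i_1$ are altered by a $\mindroop$ based at $(i_1,j_2)$, something the maximality and minimality built into the selection rules are designed to control, combined with the local form of $\mindroop$. One must likewise check that the second crossing of the two pipes passing through an active $\bt$ always lies on the expected side --- below row $i_1$ when running the forward algorithm and above it when running the inverse --- so that the forward and inverse $\cbswap$ moves act on matching pairs of tiles; this follows from the activated-BPD invariant and the at-most-one-crossing property. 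Once these local facts are established the remaining verifications are routine, and the forward and inverse algorithms are inverse bijections as claimed.
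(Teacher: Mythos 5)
Your proposal is correct and follows essentially the same route as the paper: define an activated-BPD invariant adapted to right insertion, verify that all intermediate configurations satisfy it, and pair off the forward and inverse steps one by one. The one point the paper makes explicit that you fold into your invariant is why the ``exits from row $\le k$'' condition never fails during the inverse algorithm after its first step --- namely that step (3a) preserves and step (3b) strictly decreases the exit row of the $\rt$-turn pipe --- but this is the same observation in different packaging.
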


\begin{proof}
Define {{activated BPD}} and its active tile similarly to the proof of Proposition \ref{prop:left}. It is easy to check that all intermediate steps in the algorithms are activated BPDs, where the active tiles is the tile next step is being applied to.  

The first step of the inverse algorithm reverses the creation of $\+$ on the last step of the forward algorithm, and vice versa. Note that the condition on two pipes passing through active tile not having any crossings yet corresponds to the condition $\ell(\pi t_{\alpha,\beta}) = \ell(\pi) + 1$ we require at the beginning of the inverse algorithm. 

Step (2a) of the reverse algorithm, when non-terminating, is the reverse of  step (2a) of the forward algorithm.

Steps (3a) of both algorithms are inverses of each other. 

Finally, step (3b) of the forward algorithm is inverse of step (3b) of the inverse algorithm. Note that on every step except the very first one of the inverse algorithm the condition that the pipe of the $\rt$-turn of the active tile exits from row $\le k$ will be satisfied, since step (3a) does not change where this pipe exits, while step (3b) makes the exit index even smaller. 
\end{proof}
Parallel to the left insertion, given a biword $Q$, we define a map $\mathcal{R}$ that maps $Q$ to a pair of bumpless pipe dream and a mixed $k$-chain in $k$-Bruhat order. We call this chain the \emph{recording chain for right insertion} of the word $Q$. 

\[\mathcal{R}(Q):=\begin{cases} (I,\id) & \text{ if } Q\text{ is the empty biword}\\
( D\leftarrow b_k, \mathbf{c}\lessdot_k \perm (D\leftarrow b_k))&\text{ if }Q=Q'b_k \text{ and }\mathcal{L}(Q')=(D, \mathbf{c})\end{cases}.\]

\begin{ex}
Figure \ref{fig:leftRSK} shows the insertion procedure to compute $\mathcal{R}(1_12_31_22_4)$. 
\begin{figure}[h!]
    \centering
    \includegraphics[scale=0.6]{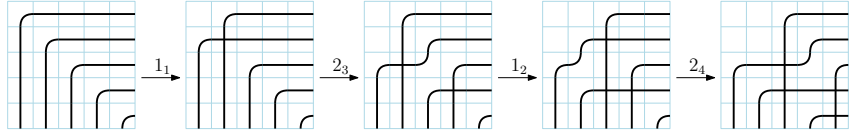}
    \caption{Right insertion of $1_12_31_22_4$}
    \label{fig:rightRSK}
\end{figure}

The recording chain is 
\[12345\lessdot_1 21345 \lessdot_3 21435 \lessdot_2 31425 \lessdot_4 31524.\]
\end{ex}

Conversely, given $D\in\BPD(\pi)$ with $\ell=\ell(\pi)$ and a mixed $k$-chain $\mathbf{c}=(\id \lessdot_{k_1}\pi_1\lessdot_{k_2}\cdots \lessdot_{k_\ell}\pi_\ell=\pi)$, we can define $\overline{\mathcal{R}}(D,\mathbf{c}):=\overline{\mathcal{R}}(D',\mathbf{c}')b_k$ where $b_k$ and $D'$ are the results of the inverse of the right insertion on the inputs $(\pi_{\ell-1},\pi,k_\ell, D)$, $\mathbf{c}'=(\id \lessdot_{k_1}\pi_1\lessdot_{k_2}\cdots \lessdot_{k_{\ell-1}}\pi_{\ell-1})$ and $\overline{R}(I)=\emptyset$, the empty biword. It's clear from the construction that $\mathcal{R}$ and $\overline{\mathcal{R}}$ are inverses of each other.  

Similar to Theorem~\ref{thm:leftRSK}, we have also the statement for  right insertion.
\begin{thm} \label{thm:rightRSK}
Fix a mixed $k$-chain $\mathbf{c}$ for the permutation $\pi$. The Schubert polynomial $\S_\pi$ is the sum of the weights of all biwords $Q$ whose recording chain for right insertion is $\mathbf{c}$.
\end{thm}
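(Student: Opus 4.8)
The plan is to mirror the proof of Theorem~\ref{thm:leftRSK} essentially verbatim, since all the structural work has already been done. By Proposition~\ref{prop:right}, the right-insertion algorithm and its inverse are mutually inverse bijections between, on one side, triples $(\pi, k, D)$ with $D\in\BPD(\pi)$ together with a biletter $b_k$, and on the other side, pairs $(\rho, E)$ with $\rho = \pi t_{\alpha\beta}$, $\alpha\le k<\beta$, $\ell(\rho)=\ell(\pi)+1$, and $E\in\BPD(\rho)$. Iterating this, exactly as in the construction of $\overline{\mathcal L}$, the map $\overline{\mathcal R}(\cdot,\mathbf c)$ is a bijection between $\BPD(\pi)$ and the set of biwords whose recording chain for right insertion equals the fixed mixed $k$-chain $\mathbf c$ for $\pi$: peeling off the last biletter of the biword corresponds to running one step of the inverse right insertion governed by the last cover $\pi_{\ell-1}\lessdot_{k_\ell}\pi_\ell$ of $\mathbf c$, and $\mathcal R$, $\overline{\mathcal R}$ undo each other step by step by construction.

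First I would observe that this bijection is weight-preserving in the appropriate sense. Each application of the right-insertion algorithm adds exactly one biletter $b_k$ to the biword and, on the BPD side, changes the weight by a factor of $x_b$: tracing through steps (1)--(3), the only net change to the set of $\bl$-tiles is the single $\bl$ that gets consumed at the first $\mindroop$ on row $b$ (and the cascade of droops/swaps/termination rearranges tiles without changing the multiset of blank-tile rows except for removing that one blank in row $b$). Hence if $\overline{\mathcal R}(D,\mathbf c)=Q$ then $\wt(D) = x_b\cdot\wt(D')$ where $D'$ is obtained by stripping the last biletter $b_k$, and by induction $\wt(D)=\wt(Q)$. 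This is the one point that is genuinely specific to right insertion rather than formally inherited, but it is the same computation as for left insertion with ``$j$ maximal'' in place of ``$j$ minimal'', so I would state it briefly and refer back.

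Then I would conclude: since $\S_\pi = \sum_{D\in\BPD(\pi)}\wt(D)$ is our definition of the Schubert polynomial, and $\overline{\mathcal R}(\cdot,\mathbf c)$ is a weight-preserving bijection from $\BPD(\pi)$ onto the biwords with recording chain $\mathbf c$, we get $\S_\pi = \sum_{Q\,:\,\mathcal R\text{-recording chain}=\mathbf c}\wt(Q)$, as claimed. I do not expect any real obstacle here; the only thing to be careful about is that the recording chain is a genuine mixed $k$-chain for $\pi$, which follows because each right-insertion step produces a BPD for $\pi t_{\alpha\beta}$ with $\alpha\le k<\beta$ and length increased by one, i.e.\ a $k$-Bruhat cover, so the proof can be kept to a couple of sentences in parallel with Theorem~\ref{thm:leftRSK}.
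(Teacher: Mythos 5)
Your proposal is correct and follows essentially the same route as the paper, which likewise proves the theorem by observing that $\overline{\mathcal R}(\cdot,\mathbf c)$ is a bijection between $\BPD(\pi)$ and the biwords with recording chain $\mathbf c$; you additionally spell out the weight-preservation (each insertion of $b_k$ telescopes the blank-tile changes down to a net single blank in row $b$, multiplying $\wt$ by $x_b$), which the paper leaves implicit. The only nitpick is the phrasing ``the $\bl$ that gets consumed'': in the forward insertion a blank is \emph{created} in row $b$ (and one is consumed at each droop target, telescoping away), but your conclusion $\wt(D)=x_b\cdot\wt(D')$ is the right one.
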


\begin{proof}
For a fixed mixed $k$-chain $\mathbf{c}$ for the permutation $\pi$, $\BPD(\pi)$ and the set of biwords whose recording chains are $\mathbf{c}$ are in bijection with the via the map $\overline{\mathcal{R}}(\cdot, \mathbf{c})$. 
\end{proof}

\begin{ex}
Let $\mathbf{c}=1234\lessdot_3 1243\lessdot_2 1342\lessdot_2 1432$ be a mixed $k$-chain for 1432. Then $\{3_3 2_2 2_2, 3_3 1_2 2_2, 3_3 1_2 1_2, 2_3 1_2 2_2, 2_3 1_2 1_2\}$ is the set of biwords whose recording chain for right insertion is $\mathbf{c}$. The sum of the weights of these biwords is $x_2^2x_3+x_1x_2x_3+x_1^2x_3+x_1x_2^2+x_1^2x_2=\S_{1432}$.
\end{ex}

\begin{rmk}
Both of our left and right RSK recover the classical RSK when we restrict to the set of biwords whose biletters all have a fixed $k$ as subscripts.
\end{rmk}

We now state our key technical result regarding commutativity of the left and right insertion algorithms and defer its technical proof until a later section. 

\begin{thm} \label{thm:comm}
Let $D\in \BPD(\pi)$. Let $x_k$ and $y_l$ be two biletters such that $l\le k$. If $\pi\neq \id$, let $d_1$ be the first descent position of $\pi$ and $d_2$ be the last descent position of $\pi$. Suppose furthermore that $k\ge d_2$ and $l\le d_1$. Then left insertion of $x_k$ commutes with right insertion of $y_l$, namely $(x_k\rightarrow D)\leftarrow y_l = x_k\rightarrow (D\leftarrow y_l)$.
\end{thm}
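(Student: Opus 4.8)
The plan is to track the two insertion procedures simultaneously and argue that the hypotheses $l \le d_1$ and $k \ge d_2$ force their active tiles to stay ``out of each other's way'' until both terminate. The key structural observation to establish first is that, because $l \le d_1$, a right insertion of $y_l$ only ever operates on pipes that turn (have an $\rt$-turn) in rows $\le l \le d_1$, and the droops it performs push into columns that lie weakly to the right of where any activity of the $x_k$-insertion can reach; dually, because $k \ge d_2$, the left insertion of $x_k$ only manipulates pipes exiting in rows $\ge \cdots$ and columns in the region ``below/left'' of the $y_l$-activity. Concretely, I would introduce a notion of the \emph{support region} of each insertion (the set of tiles that are ever active, or ever modified, during the run), and prove by induction on the steps that for the $x_k$ run this region lies in rows $> l$ or to one side, while for the $y_l$ run it lies in rows $\le l$; the separated-descent hypothesis is exactly what guarantees these regions are disjoint, or meet only in a controlled ``corner'' configuration where the two moves commute locally.

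The main steps, in order, would be: (1) Unwind the definitions to describe, for a BPD $D\in\BPD(\pi)$ with first descent $d_1$ and last descent $d_2$, where the $\rt$ with minimal column in row $b\le l$ sits and which pipes pass through it — here I would use that below row $d_1$ the permutation $\pi$ is increasing, so the relevant pipes behave ``Grassmannian-like.'' (2) Prove a \emph{locality lemma}: a single left-insertion step of $x_k$ starting from an active tile in some region $U$, and a single right-insertion step of $y_l$ starting from an active tile in a region $V$ with $U,V$ appropriately separated, can be performed in either order with the same result (this is essentially checking that $\mindroop$, $\cbswap$, and $\term$ at one location commute with the same operations at a non-interfering location — a finite tile-configuration check, aided by the fact that droops of one never enter the rows/columns the other uses). (3) Run the induction: assuming the active tiles of the two partially-completed insertions remain separated, show one more step of each preserves separation, invoking the $k\ge d_2$, $l\le d_1$ conditions at the point where a droop or swap could in principle cross the boundary row — and show it cannot, because crossing would require a descent strictly between $d_1$ and $d_2$ to be ``used,'' contradicting separation. (4) Conclude that the terminating $\+$-creation steps also commute, so the two final BPDs agree; the permutations agree automatically since $t_{\alpha\beta}$ with $\alpha\le k$ and $t_{\gamma\delta}$ with $\delta > l$... (more precisely, the two transpositions act on disjoint sets of values/positions under the hypotheses, another place to invoke separated descents).

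I expect the main obstacle to be step (3), the propagation of separation through a full run: a single droop can move the active tile down many rows and right many columns, and one must rule out that the $x_k$-insertion's active tile ever descends into rows $\le l$ (where the $y_l$-insertion lives) or that the $y_l$-insertion's droops climb into the region the $x_k$-insertion needs. The delicate case is a $\cbswap$, which can move the active tile to a crossing that is ``below row $i_1$'' (forward) or ``above row $i_1$'' (inverse) — potentially a long-range jump — so I would need a careful invariant, probably phrased in terms of which pipes are allowed to be touched (indexed by their exit rows relative to $d_1,d_2$) rather than purely in terms of coordinates, to show that such a crossing still respects the boundary. I would also need to handle the subtlety that after one insertion the permutation's descent set changes, so ``$d_1$'' and ``$d_2$'' for the intermediate BPD are not literally the same; the fix is that the inserted transpositions only create or destroy descents at positions $\le l$ or $\ge k$ respectively (by Monk's rule, $a\le k<b$), so the separated-descent structure is preserved throughout, which is what keeps the induction going. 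Finally, I would want to record that this commutativity specializes to the classical left/right Schensted commutation when all subscripts equal a single $k$ (the Grassmannian case), as a sanity check on the argument.
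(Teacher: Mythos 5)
Your central premise --- that $l\le d_1$ and $k\ge d_2$ force the active tiles of the two insertions to stay ``out of each other's way,'' with disjoint support regions --- is false, and this is where the proposal breaks down. The two insertions genuinely collide: their markers can land on the same row, the same column, adjacent tiles, and even the very same tile (the paper's proof has an entire case ``$\red=\yellow$'' where both insertions are simultaneously active at one $\bt$). The hypotheses on descents do not buy disjointness; what they buy is that each individual collision is \emph{locally confluent}. Concretely, the paper first proves two lemmas you partially anticipate (with $k\ge d_2$ the left insertion never needs a $\cbswap$ and is just a chain of $\mindroop$s ending in a $\+$; with $l\le d_1$ every droop of the right insertion is confined to a width-$2$ rectangle), then defines ``doubly activated'' BPDs carrying one marker for each insertion, a transition relation advancing either marker, and a list of forbidden configurations and state identifications. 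The theorem is then obtained from the Diamond Lemma: the relation is well-founded (an area-under-the-pipe statistic strictly decreases) and locally confluent, the latter verified by an exhaustive case analysis of every way the two moves can overlap (same row, same column, equal markers, marker NW/NE/SE/SW of the other, terminal versus non-terminal moves, pipes crossing or not). Your step (2) ``locality lemma'' only covers the trivial non-interfering case, and your step (3) induction cannot be run because the separation invariant it tries to propagate does not hold even at the start (e.g.\ in the case $\red=\yellow$). Your step (4) is also off: the two Monk transpositions need not act on disjoint positions or values; that the resulting permutations agree is governed by the growth-diagram square condition (Lemma~\ref{lem:square}), not by disjointness.

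A secondary gap: even if one granted some separation, commuting two long runs of moves requires knowing that the \emph{interleavings} all terminate in the same state, which is exactly what a confluence/Newman-type argument supplies; a step-by-step ``perform one step of each and swap them'' induction does not obviously compose into $(x_k\rightarrow D)\leftarrow y_l = x_k\rightarrow(D\leftarrow y_l)$ without an argument that the intermediate mixed states are well-defined independently of the order of past moves. Your observations about the intermediate descents being preserved and about $\cbswap$ being the delicate long-range move are correct and do appear in the paper (as a lemma on $d_1,d_2$ of $\perm(x_k\rightarrow D)$ and $\perm(D\leftarrow y_l)$, and in the forbidden-configuration bookkeeping), but they are supporting facts, not a substitute for the confluence analysis.
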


\begin{ex}
We show an example in Figure \ref{fig:inscommute} to illustrate the commutativity stated in Theorem \ref{thm:comm}. The right insertion of $1_2$ and the left insertion of $2_4$ into a BPD of 213645 commute.
\begin{figure}[h!]
    \centering
    \includegraphics[scale=0.6]{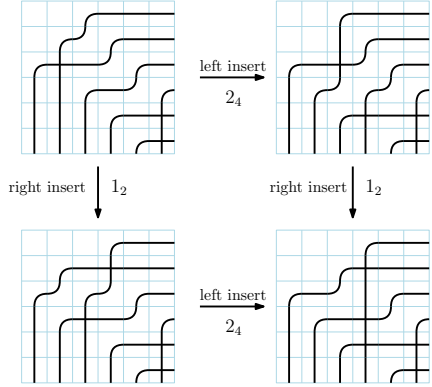}
    \caption{Commutativity of left and right insertions}
    \label{fig:inscommute}
\end{figure}
\end{ex}

\section{Growth Diagrams}
We start with a simple but useful lemma on a single square in a growth diagram. Given a permutation $\pi$, let $d_1(\pi)$ denote the first descent position of $\pi$ and $d_2(\pi)$ denote the last descent position of $\pi$. 
\begin{lemma}\label{lem:square}
Let $\pi,\sigma,\delta,\rho$ be permutations such that $\delta\lessdot_l \pi$, $\rho\lessdot_l \sigma$, $\delta\lessdot_k\rho$, and $\pi\lessdot_k\sigma$ where $l\le k$. If $\delta\neq \id$, suppose furthermore that $l\le d_1(\delta)$ and $k\ge d_2(\delta)$. Then 
\begin{enumerate}[(a)]
    \item $k\ge d_2(\pi)$  and $l\le d_1(\pi)$
    \item  $l\le d_1(\rho)$ and $k\ge d_2(\rho)$. 
\end{enumerate}
 
 % \[\begin{tikzcd}
%	\pi & \sigma \\
%	\delta& \rho
%	\arrow["k"', no head, from=2-1, to=2-2]
%	\arrow["l", no head, from=2-1, to=1-1]
%	\arrow["k", no head, from=1-1, to=1-2]
%	\arrow["l", no head, from=1-2, to=2-2]
%\end{tikzcd}\]
\end{lemma}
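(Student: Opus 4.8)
The plan is to analyze the local square directly in terms of the one-step relations $\delta \lessdot_l \pi$, $\delta \lessdot_k \rho$, $\pi \lessdot_k \sigma$, $\rho \lessdot_l \sigma$, and to track how first and last descent positions change under a single $k$-Bruhat cover. The key observation I would extract first is a ``monotonicity of descents under $k$-Bruhat covers'': if $\mu \lessdot_m \nu$, so $\nu = \mu\, t_{\alpha\beta}$ with $\alpha \le m < \beta$ and $\ell(\nu) = \ell(\mu)+1$, then $\nu$ is obtained from $\mu$ by moving a smaller value at position $\alpha$ to the right past a larger value at position $\beta$ (in one-line notation, values $\mu(\alpha) < \mu(\beta)$ get swapped, with $\alpha \le m < \beta$). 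I would prove the elementary facts: (i) such a cover cannot destroy a descent that lies at a position $\le m$ in $\mu$ and does not involve $\alpha$; more carefully, I want to show $d_1(\nu) \le m$ whenever $\mu \ne \id$ forces $d_1(\mu)\le m$, and symmetrically $d_2(\nu) \ge m$ — actually the cleaner statements are $d_1(\nu) \le d_1(\mu)$ is \emph{false} in general, so I need to be careful and instead aim only for the one-sided bounds that are actually claimed, namely $d_1(\pi) \ge l$, $d_2(\pi) \le k$, etc. Wait — the claim is $l \le d_1(\pi)$, i.e. $\pi$ has \emph{no} descent strictly before position $l$. So the real content is: a $k$-Bruhat cover with cut $l$ cannot \emph{create} a descent before position $l$, and a $k$-Bruhat cover with cut $k$ cannot create a descent after position $k$. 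That is the lemma I would isolate and prove first.

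Concretely, I would prove: \emph{if $\mu \lessdot_l \nu$ and $\mu$ has no descent at any position $< l$ (vacuously true if $\mu = \id$, and hypothesized if $\mu = \delta$), then $\nu$ has no descent at any position $< l$; and if $\mu \lessdot_k \nu$ and $\mu$ has no descent at any position $> k$, then $\nu$ has no descent at any position $> k$.} For the first: $\nu = \mu\, t_{\alpha\beta}$ with $\alpha \le l < \beta$, and $\nu$ agrees with $\mu$ outside positions $\alpha, \beta$. A descent of $\nu$ at position $p < l$ would involve positions $p, p+1 \le l < \beta$, so position $\beta$ is not among them; the only way such a descent is new is if $p$ or $p+1$ equals $\alpha$. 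Since $\mu(\alpha) < \mu(\beta) = \nu(\alpha)$ has its value \emph{increased} at $\alpha$, and the neighbor value is unchanged, one checks case by case ($\alpha = p$ vs.\ $\alpha = p+1$) that no new descent at $p < l$ can appear, using that $\mu$ had no descent there and that $\alpha < \beta$ together with reducedness of the cover pins down the relative order. The ``no descent after $k$'' statement is the mirror image (value at $\beta$ is decreased, $\beta > k$, positions $> k$). This is exactly the kind of short combinatorial case-check that should go through cleanly once set up.

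Given that lemma, part (a) is immediate: $\pi$ is obtained from $\delta$ by a cover with cut $l$, and $\delta$ (if not $\id$) has no descent before $d_1(\delta) \ge l$ hence none before $l$; so $\pi$ has no descent before $l$, i.e.\ $d_1(\pi) \ge l$. Dually $\sigma$ is obtained from $\pi$ by a cover with cut $k$, but I want a bound on $\pi$, not $\sigma$ — so instead I use that $\pi$ comes from $\delta$ by the cut-$l$ cover and $\delta$ has no descent after $d_2(\delta) \le k$; the cut-$l$ statement doesn't directly control descents after $k$. The fix: apply the ``no descent after $k$'' half of the lemma to the cover $\delta \lessdot_l \pi$? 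That requires the cover's cut to be $k$, which it isn't. So I actually need the sharper fact that \emph{any} cover $\mu \lessdot_m \nu$ with $m \le k$ preserves ``no descent after $k$'' — which is true by the same argument since moving a value rightward past position $m \le k$ can only affect positions $\le m$ together with position $\beta$; a new descent at $p > k \ge m$ would need $p$ or $p+1$ to equal $\beta$, and since $\nu(\beta) = \mu(\alpha) < \mu(\beta)$ the value at $\beta$ is \emph{decreased}, again ruling out a new descent after $k$ by a short case-check. With this strengthened lemma (a cover with cut $\le k$ can't create a descent after $k$; a cover with cut $\ge l$ can't create a descent before $l$ — here ``cut $\ge l$'' just means $l \le m$, and since $\beta > m \ge l$ the argument runs), both (a) and (b) follow: for (b), $\rho$ comes from $\delta$ by the cut-$k$ cover ($k \ge l$, so it also has ``cut $\ge l$''), giving no descent before $l$ and no descent after $k$ for $\rho$.

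\textbf{Main obstacle.} I expect the real work to be the case analysis in the elementary descent-monotonicity lemma: one must handle the subcases where the moved position $\alpha$ (or $\beta$) coincides with $p$ or $p+1$ at the boundary of the forbidden zone, and correctly use reducedness ($\ell(\nu) = \ell(\mu)+1$, equivalently no position strictly between $\alpha$ and $\beta$ carries a value between $\mu(\alpha)$ and $\mu(\beta)$) to rule out the spurious new descents. Everything else — deducing (a) and (b) from the lemma — is a one-line application once the lemma is phrased as ``covers with cut in $[l, \infty)$ preserve no-descent-before-$l$, and covers with cut in $(-\infty, k]$ preserve no-descent-after-$k$,'' which is the shape dictated by the four edges of the square.
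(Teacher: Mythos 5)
Your proposal is correct and follows essentially the same route as the paper: both arguments analyze a single reduced cover $\delta\, t_{\alpha\beta}$ directly, observe that a new descent in the forbidden zone could only appear at a position adjacent to $\alpha$ or $\beta$, and use reducedness (no value between $\delta(\alpha)$ and $\delta(\beta)$ occupies a position strictly between $\alpha$ and $\beta$) to rule it out. Your packaging as two uniform preservation lemmas (a cover with cut $\ge l$ preserves no-descent-before-$l$; a cover with cut $\le k$ preserves no-descent-after-$k$) is a slightly cleaner organization than the paper's case split on where $b$ sits relative to $d_2(\delta)$, but the content is the same.
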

\begin{proof}
To show $k\ge d_2(\pi)$, we claim that $d_2(\pi)\le d_2(\delta)$. Since $\delta \lessdot_l\pi$, $\pi=\delta t_{ab} $  for some $a\le l <b$. If $b< d_2(\delta)$, then $d_2(\pi)=d_2(\delta)$. 
If $b=d_2(\delta)$, then $\pi(b)=\delta(a)<\delta(b)$ and $\pi(i)=\delta(i)$ for all $i>b$, so $d_2(\pi)\le d_2(\delta)$. 
If $b> d_2(\delta)$, suppose to the contrary that $d_2(\pi)>d_2(\delta)$. Then since $\delta$ is strictly increasing after $d_2(\delta)$, for $\pi$ to have a descent after $d_2(\delta)$ it must be the case that $a=b-1=d_2(\pi)>d_2(\delta)\ge d_1(\delta)\ge l$, which is a contradiction.

To show $l\le d_1(\pi)$, the claim is immediate if $a=l$. Suppose $a < l$. We know that $\delta(b)>\delta(a)$, and since $\delta(a)<\delta(a+1)$, we must have $\delta(a+1)>\delta(b)$. Therefore $\pi(a)=\delta(b)<\delta(a+1)=\pi(a+1)$. In this case $d_1(\pi)=d_1(\delta)$. 

The argument for $\rho$ is similar.
\end{proof}

The following statement is immediate from Lemma~\ref{lem:square}. An example of a growth diagram as described in this corollary is shown in Example \ref{ex:sepdescgrowth}.

\begin{cor}
\label{cor:allsquares}
Let $(\pi_{i,j},k_i,l_j)$ be a growth diagram with $i\le m$ and $j\le n$. Suppose the $k_i$'s are weakly increasing, the $l_j$'s are weakly decreasing, and $l_1 \le k_1$. Then for each $1\le j\le n$, $l_j$ is no greater than the first descent of $\pi_{i,j-1}$ for all $i$, and for each $1\le i \le m$, $k_i$ is no smaller than the last descent of $\pi_{i-1,j}$ for all $j$.
\end{cor}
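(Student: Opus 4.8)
The plan is to deduce Corollary~\ref{cor:allsquares} by induction on the growth diagram, using Lemma~\ref{lem:square} as the inductive step propagated across a single square. The statement we actually want to maintain as an invariant is: for every square indexed by $(i,j)$ (with $1\le i\le m$, $1\le j\le n$), the bottom-left corner $\pi_{i-1,j-1}$ satisfies $l_j\le d_1(\pi_{i-1,j-1})$ and $k_i\ge d_2(\pi_{i-1,j-1})$ (whenever $\pi_{i-1,j-1}\neq\id$). Once this invariant holds at every square, conclusion (a) of Lemma~\ref{lem:square} applied to the square at $(i,j)$ gives $l_j\le d_1(\pi_{i-1,j})$ and $k_i\ge d_2(\pi_{i-1,j})$, which, as $i$ ranges over $1,\dots,m$ and $j$ over $1,\dots,n$, is exactly the assertion of the corollary. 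So the real content is establishing the invariant.

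First I would set up the base case: at the square $(1,1)$ the bottom-left corner is $\pi_{0,0}=\id$, so there is nothing to check. Then I would propagate along the first row ($i=1$) and first column ($j=1$), and then fill in the rest. For the propagation I need a compatibility observation: to apply Lemma~\ref{lem:square} to a square, its two labels $l_j$ (vertical) and $k_i$ (horizontal) must satisfy $l_j\le k_i$. This is where the hypotheses ``$k_i$'s weakly increasing, $l_j$'s weakly decreasing, $l_1\le k_1$'' enter: for any $i,j$ we have $l_j\le l_1\le k_1\le k_i$, so every square legitimately satisfies $l_j\le k_i$ and Lemma~\ref{lem:square} is applicable throughout. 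Next, to move the invariant from the square at $(i,j)$ to its right neighbor at $(i,j+1)$: the bottom-left corner of the new square is $\pi_{i-1,j}$, and Lemma~\ref{lem:square}(a) for square $(i,j)$ tells us $l_j\le d_1(\pi_{i-1,j})$ and $k_i\ge d_2(\pi_{i-1,j})$; since $l_{j+1}\le l_j$ and $k_i=k_i$, the corner $\pi_{i-1,j}$ satisfies the invariant for square $(i,j+1)$. Moving upward from square $(i,j)$ to square $(i+1,j)$: the new bottom-left corner is $\pi_{i,j-1}$, and Lemma~\ref{lem:square}(b) for square $(i,j)$ gives $l_j\le d_1(\pi_{i,j-1})$ and $k_i\ge d_2(\pi_{i,j-1})$; since $k_{i+1}\ge k_i$ and $l_j=l_j$, the corner $\pi_{i,j-1}$ satisfies the invariant for square $(i+1,j)$. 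A clean way to organize this is a double induction on $i+j$, or equivalently to observe that each square receives the invariant either from the square below it (via part (b)) or from the square to its left (via part (a)), with the corner $(0,0)=\id$ seeding everything.

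The one genuine subtlety — and the step I expect to be the main obstacle to write carefully rather than conceptually hard — is the bookkeeping around the ``$\pi_{i-1,j-1}\neq\id$'' caveat in Lemma~\ref{lem:square} and the degenerate case where an entire initial segment of the diagram equals the identity. If $\pi_{i-1,j-1}=\id$ then $d_1$ and $d_2$ are undefined, but the lemma's conclusions still hold vacuously as inequalities are not required, or more precisely one checks that $\pi_{i-1,j}$ and $\pi_{i,j-1}$ are either $\id$ again or have their unique descent at a position $\geq$ some bound forced by $k_i$ and $\leq$ a bound forced by $l_j$ — this is exactly what the proof of Lemma~\ref{lem:square} shows when $\delta=\id$, tracing through $\pi=\delta t_{ab}$ with $a\le l_j<b$. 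I would handle this by stating the invariant as ``$\pi_{i-1,j-1}=\id$, or ($l_j\le d_1(\pi_{i-1,j-1})$ and $k_i\ge d_2(\pi_{i-1,j-1})$)'' and noting that Lemma~\ref{lem:square} (including its proof in the $\delta=\id$ case) preserves this disjunction across each square.

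Putting it together, the write-up is short:

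\begin{proof}[Proof of Corollary~\ref{cor:allsquares}]
We show by induction that for every square, indexed by $1\le i\le m$ and $1\le j\le n$, the corner $\pi_{i-1,j-1}$ is either $\id$, or satisfies $l_j\le d_1(\pi_{i-1,j-1})$ and $k_i\ge d_2(\pi_{i-1,j-1})$. Granting this, Lemma~\ref{lem:square}(a) applied to the square at $(i,j)$ yields $l_j\le d_1(\pi_{i-1,j})$ and $k_i\ge d_2(\pi_{i-1,j})$ (the identity case being handled as in the proof of Lemma~\ref{lem:square}), which is precisely the asserted statement.

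For the induction, note first that for all $i,j$ we have $l_j\le l_1\le k_1\le k_i$ since the $l_j$'s are weakly decreasing, the $k_i$'s weakly increasing, and $l_1\le k_1$; hence Lemma~\ref{lem:square} applies to every square. The corner $\pi_{0,0}=\id$ gives the base case. Now consider a square at $(i,j)$ and assume the invariant at all squares $(i',j')$ with $i'+j'<i+j$. If $i\ge 2$, the square at $(i-1,j)$ satisfies the invariant, so by Lemma~\ref{lem:square}(b) its application gives $l_j\le d_1(\pi_{i-1,j-1})$ and $k_{i-1}\ge d_2(\pi_{i-1,j-1})$ (or $\pi_{i-1,j-1}=\id$); since $k_i\ge k_{i-1}$, the corner $\pi_{i-1,j-1}$ satisfies the invariant for the square at $(i,j)$. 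If instead $i=1$ and $j\ge 2$, the square at $(1,j-1)$ satisfies the invariant, so Lemma~\ref{lem:square}(a) gives $l_{j-1}\le d_1(\pi_{0,j-1})=d_1(\pi_{i-1,j-1})$ and $k_1\ge d_2(\pi_{i-1,j-1})$ (or $\pi_{i-1,j-1}=\id$); since $l_j\le l_{j-1}$, the corner satisfies the invariant for the square at $(i,j)$. This completes the induction.
\end{proof}
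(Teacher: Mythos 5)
Your inductive propagation of the invariant across squares is exactly the argument the paper has in mind (the paper simply declares the corollary ``immediate from Lemma~\ref{lem:square}'' and writes nothing), and the induction itself is sound: the monotonicity hypotheses guarantee $l_j\le k_i$ at every square, and parts (a) and (b) of the lemma hand the invariant to the right and upper neighbors respectively. The one thing to repair is the final assembly: conclusion (a) at square $(i,j)$ controls the \emph{top-left} corner $\pi_{i-1,j}$, whereas the corollary's first family of inequalities, $l_j\le d_1(\pi_{i,j-1})$, concerns the \emph{bottom-right} corner and therefore comes from conclusion (b) at square $(i,j)$ (for $i\ge 1$) together with the invariant at square $(1,j)$ for the $i=0$ entry; symmetrically, $k_i\ge d_2(\pi_{i-1,j})$ comes from conclusion (a) for $j\ge 1$ and from the invariant at square $(i,1)$ for $j=0$. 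All of these facts are already established by your induction, so this is a citation fix rather than a missing idea, but as written the claim that conclusion (a) alone ``is exactly the assertion of the corollary'' is not literally true.
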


Combining Lemma~\ref{lem:square} and Theorem~\ref{thm:comm}, we can conclude the following:

\begin{cor}
\label{cor:uniquebpd}
Let $\pi,\sigma,\rho,\delta$ be permutations and $k,l$ be integers that satisfy the conditions in Lemma~\ref{lem:square}. Then for every $D_\sigma\in\BPD(\sigma)$, there exists unique $D_\delta\in\BPD(\delta)$, $x_k$, and $y_l$ such that $(x_k\rightarrow D_\delta)\leftarrow y_l = x_k\rightarrow (D_\delta\leftarrow y_l)=D_\sigma.$
\end{cor}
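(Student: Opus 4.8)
The plan is to reduce the statement to the combination of Lemma~\ref{lem:square} and Theorem~\ref{thm:comm} together with the bijectivity of the inverse insertion procedures (Propositions~\ref{prop:left} and \ref{prop:right}). First I would note that, by Lemma~\ref{lem:square}, the hypotheses on $\delta,k,l$ propagate: $k\ge d_2(\pi)$, $l\le d_1(\pi)$, $k\ge d_2(\rho)$, $l\le d_1(\rho)$, and in particular the pair $(k,l)$ satisfies the descent conditions of Theorem~\ref{thm:comm} relative to each of the permutations $\delta$, $\pi$, $\rho$ appearing on the two sides of the diagram. This is exactly what is needed for the commutativity theorem to apply along whichever order of insertions we carry out.

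Next I would construct the candidate $(D_\delta, x_k, y_l)$ by running the inverse algorithms. Starting from $D_\sigma\in\BPD(\sigma)$: since $\sigma\gtrdot_k\pi$ we may apply the inverse of right insertion (valid because $\ell(\sigma)=\ell(\pi)+1$, which holds as $\sigma$ covers $\pi$) with parameter $k$ to obtain a biletter $x_k$ and a BPD $E_\pi\in\BPD(\pi)$; here the output biletter genuinely has subscript $k$ by construction of the inverse algorithm. Then, since $\pi\gtrdot_l\delta$, apply the inverse of left insertion with parameter $l$ to $E_\pi$, obtaining a biletter $y_l$ and a BPD $D_\delta\in\BPD(\delta)$. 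By Propositions~\ref{prop:left} and \ref{prop:right}, these inverse steps recover exactly the forward insertions, so $(x_k\rightarrow D_\delta)\leftarrow y_l$ — wait, more precisely $D_\delta\leftarrow$ then $\rightarrow$: we get $E_\pi = (D_\delta \leftarrow y_l)$ from the inverse of left insertion being inverse to left insertion, and then $D_\sigma = (x_k\rightarrow E_\pi)$; but this is the wrong parenthesization, so instead I run the inverses in the other order: apply the inverse of left insertion first is not possible since $\sigma$ is obtained from $\pi$ by a $k$-cover, not an $l$-cover. So the correct order is: inverse right insertion $D_\sigma\mapsto(x_k,E_\pi)$, then inverse left insertion $E_\pi\mapsto(y_l,D_\delta)$, which yields $D_\sigma=(E_\pi\leftarrow x_k)$... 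Let me restate cleanly: the output of the construction is a triple with $E_\pi=(D_\delta \leftarrow y_l)$ and $D_\sigma = (E_\pi \leftarrow x_k)$? No — $x_k$ is extracted by inverse \emph{right} insertion from $D_\sigma$, so $D_\sigma=(E_\pi\leftarrow x_k)$ only if that was a right insertion, but it was; hmm, the subtlety is that we want $x_k$ inserted on the left. The resolution: apply Theorem~\ref{thm:comm} to $E_\pi$ once we know $E_\pi = (D_\delta\leftarrow y_l)$ and we have the freedom to recover $x_k$ via \emph{left} insertion instead. Concretely: from $D_\sigma$, since $\sigma\gtrdot_l\rho$, run inverse left insertion with parameter $l$ to get $(y_l, F_\rho)$ with $F_\rho\in\BPD(\rho)$; then since $\rho\gtrdot_k\delta$, run inverse right insertion with parameter $k$ to get $(x_k, D_\delta)$ with $D_\delta\in\BPD(\delta)$. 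Then by the bijectivity propositions, $D_\sigma = (F_\rho\leftarrow y_l)$ and $F_\rho = (x_k\rightarrow D_\delta)$? That gives $D_\sigma = (x_k\rightarrow D_\delta)\leftarrow y_l$, which is one of the two desired expressions. Then Theorem~\ref{thm:comm} applied to $D_\delta$ with biletters $x_k$, $y_l$ (whose descent conditions are verified via Lemma~\ref{lem:square}) gives $(x_k\rightarrow D_\delta)\leftarrow y_l = x_k\rightarrow(D_\delta\leftarrow y_l)$, establishing both equalities with $D_\sigma$.

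For uniqueness, suppose $(D_\delta', x_k', y_l')$ is another triple with $(x_k'\rightarrow D_\delta')\leftarrow y_l' = D_\sigma$. Running the inverse left insertion with parameter $l$ on $D_\sigma$ is a well-defined single-valued operation (Proposition~\ref{prop:left}), and it must return $(y_l', (x_k'\rightarrow D_\delta'))$; since it also returns $(y_l, F_\rho)$, we get $y_l'=y_l$ and $x_k'\rightarrow D_\delta' = F_\rho$. Then running inverse right insertion with parameter $k$ on $F_\rho$ is single-valued and returns both $(x_k', D_\delta')$ and $(x_k, D_\delta)$, so $x_k'=x_k$ and $D_\delta'=D_\delta$. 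The same argument starting from the expression $x_k\rightarrow(D_\delta\leftarrow y_l)$ handles triples presented in that form, using commutativity to pass between the two.

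The main obstacle I anticipate is purely bookkeeping: making sure that at each application of an inverse algorithm the required hypotheses actually hold — namely that the relevant cover relation ($\sigma\gtrdot_l\rho$, $\rho\gtrdot_k\delta$, etc.) supplies both the length condition $\ell(\cdot)=\ell(\cdot)+1$ and the descent condition needed so that Theorem~\ref{thm:comm} can be invoked on $D_\delta$. The length conditions are automatic from the cover relations in $k$-Bruhat order, and the descent conditions are precisely the content of Lemma~\ref{lem:square}, so no genuinely new ideas are required; the only care needed is to track which permutation plays the role of ``$\pi$'' in Theorem~\ref{thm:comm} at each step and to confirm that $x_k$ and $y_l$ carry the correct subscripts (guaranteed by the construction of the inverse algorithms, which output biletters with prescribed second coordinate). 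I would therefore present this as a short formal argument citing Lemma~\ref{lem:square}, Propositions~\ref{prop:left}--\ref{prop:right}, and Theorem~\ref{thm:comm}.
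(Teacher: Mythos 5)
Your argument is essentially the paper's: run the two inverse insertion algorithms in sequence along one side of the commuting square to recover a triple $(D_\delta,x_k,y_l)$ realizing one parenthesization of $D_\sigma$, then invoke Theorem~\ref{thm:comm} (whose hypotheses are supplied by Lemma~\ref{lem:square}) to get the other; the paper happens to peel off the edges via $\pi$ (inverse left insertion for $\pi\lessdot_k\sigma$, then inverse right insertion for $\delta\lessdot_l\pi$) while you go via $\rho$, which is immaterial. One correction to your final ``clean'' construction: in this paper the $k$-edges are \emph{left} insertions and the $l$-edges are \emph{right} insertions, so to undo $\sigma\gtrdot_l\rho$ (i.e.\ to peel off $\leftarrow y_l$) you must run the inverse of the \emph{right} insertion with parameter $l$, and to undo $\rho\gtrdot_k\delta$ (peeling off $x_k\rightarrow$) the inverse of the \emph{left} insertion with parameter $k$ --- you have these two labels swapped, although the identities you then write down, $D_\sigma=(F_\rho\leftarrow y_l)$ and $F_\rho=(x_k\rightarrow D_\delta)$, are the ones the correctly labelled construction yields. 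Your explicit uniqueness paragraph, via single-valuedness of the inverse algorithms, is a point the paper leaves implicit and is a sound addition.
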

\begin{proof}

Starting from $D_\sigma$, we run the inverse of the left insertion algorithm on the inputs $(\pi, \sigma,k, D_\sigma)$ and obtain a biletter $x_k$ and $D_\pi\in \BPD(\pi)$. We then run the inverse of the right insertion algorithm on the inputs $(\delta, \pi, l, D_\delta)$ and obtain a biletter $y_l$ and $D_\delta\in \BPD(\delta)$. Then by Theorem~\ref{thm:comm}, $(x_k\rightarrow D_\delta)\leftarrow y_l = x_k\rightarrow (D_\delta\leftarrow y_l)=D_\sigma.$
\end{proof}

\subsection{Two mixed chains in $k$-Bruhat order} Given a permutation $w\in S_n$, we define the up-chain and the down-chain for $w$.
\subsubsection{Up-chain}
Let $w \in S_n$ be a permutation. Let $\phi(w):=s_{w(i)}w$, where $i$ is the largest possible position for which $s_{w(i)}w\lessdot w$. Let $\kappa(w):=i-1$. Clearly, $s_{w(i)}w\lessdot_{\kappa(w)} w$. Let $\ell:=\ell(w)$ and $w_j=\phi^{\ell-j}(w)$ for  each $0\le j\le \ell$. We define a mixed chain of permutations in $k$-Bruhat order for weakly increasing values of $k$'s as follows:
\[\ch(w)=(\id=w_0\lessdot_{\kappa(w_1)} w_1\lessdot_{\kappa(w_2)}\cdots \lessdot_{\kappa(w_{\ell-1})} w_{\ell-1}\lessdot_{\kappa(w)} w=w_\ell)\]
Notice that by construction, $d_1(w)\le\kappa(w_1)\le \kappa(w_2)\le \cdots \le \kappa(w_{\ell-1})\le \kappa(w)$.
\begin{ex}
$\ch(13542) = 12345 \lessdot_3 12435\lessdot_{4}12534\lessdot_{4}12543\lessdot_4 13542.$
\end{ex}
\subsubsection{Down-chain} Let $w\in S_n$ be a permutation. Let $\psi(w)=wt_{ij}$, where $i$ is the smallest possible index with $w(i)\neq i$, and $j>i$ the smallest possible index where $wt_{ij}\lessdot w$. Let $\gamma(w)=i$. Clearly $wt_{ij}\lessdot_i w$. Let $\ell:=\ell(w)$ and $w_j=\psi^{\ell-j}(w)$ for each $0\le j\le \ell$. We define a mixed chain of permutations in $k$-Bruhat order for weakly decreasing values of $k$'s as follows:
\[\chd(w)=(\id=w_0\lessdot_{\gamma(w_1)} w_1\lessdot_{\gamma(w_2)}\cdots \lessdot_{\gamma(w_{\ell-1})} w_{\ell-1}\lessdot_{\gamma(w)} w=w_\ell)\]
Notice that by construction, $d_2(w)\ge \gamma(w_1)\ge \gamma(w_2)\ge \cdots \ge \gamma(w_{\ell-1})\ge \gamma(w)$.
\begin{ex}
$\chd(1432)=1234\lessdot_3 1243\lessdot_2 1342\lessdot_2 1432.$
\end{ex}

\subsection{Separated descent Schubert calculus} 
\begin{thm}
\label{thm:rule}
Let $w$ and $v$ be permutations such that $d_1(w)\ge d_2(v)$.
Let $\mathcal{G}_{w,v}^u$ denote the set of growth diagrams $(\pi_{i,j}, k_i, l_j)$ that satisfy the conditions that $0\le i\le \ell(w)$, $0\le j\le \ell(v)$, $\pi_{\ell(w),\ell(v)}=u$, and
\[(\pi_{0,0}\lessdot_{k_1} \pi_{1,0}\lessdot_{k_2}\cdots \lessdot_{k_{\ell(w)}}\pi_{\ell(w),0})=\ch(w),\]
\[(\pi_{0,0}\lessdot_{l_1} \pi_{0,1}\lessdot_{l_2}\cdots \lessdot_{l_{\ell(v)}}\pi_{0,\ell(v)})=\chd(v).\]
Then $c_{w,v}^u=|\mathcal{G}_{w,v}^u|.$ Equivalently, $c_{w,v}^u$ is the number of mixed $k$-chains $\mathbf{d}$ from $w$ to $u$ such that $\jdt_{\ch(w)}(\mathbf{d})=\chd(v)$.
\end{thm}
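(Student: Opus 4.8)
## Proof Plan for Theorem~\ref{thm:rule}

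The plan is to combine the two RSK-type correspondences (Theorems~\ref{thm:leftRSK} and~\ref{thm:rightRSK}) with the growth-diagram machinery, using the separated-descent hypothesis $d_1(w)\ge d_2(v)$ to guarantee that the local commutativity of Theorem~\ref{thm:comm} propagates across every square of a growth diagram. Concretely, I would argue that $c_{w,v}^u$ counts pairs consisting of a BPD $D\in\BPD(w)$ together with a choice of "insertion data" that builds $v$ on top of it, and then show via a transpose/$\jdt$ argument that this count is independent of $D$ and equals $|\mathcal{G}_{w,v}^u|$.

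First I would set up the algebraic identity. Using the BPD model, $\S_w\S_v=\sum_u c_{w,v}^u\S_u$, and by Theorem~\ref{thm:leftRSK} applied with the fixed recording chain $\ch(w)$ (an increasing mixed $k$-chain, legitimate since $d_1(w)\le\kappa(w_1)$), $\S_w$ is the generating function of biwords with recording chain $\ch(w)$; each such biword produces, under $\mathcal{L}$, exactly the BPDs in $\BPD(w)$. Dually, by Theorem~\ref{thm:rightRSK} with recording chain $\chd(v)$ (a decreasing mixed $k$-chain, legitimate since $d_2(v)\ge\gamma(v_1)$), $\S_v$ is the generating function of biwords with right-recording chain $\chd(v)$. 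The hypothesis $d_1(w)\ge d_2(v)$ means that every biletter $x_k$ appearing in a $\ch(w)$-biword has $k\ge d_1(w)\ge d_2(v)\ge$ every subscript $l$ appearing in a $\chd(v)$-biword — this is exactly the hypothesis $l\le k$ of Theorem~\ref{thm:comm}, and moreover $k\ge d_2$ and $l\le d_1$ will hold for all the intermediate permutations by Lemma~\ref{lem:square} / Corollary~\ref{cor:allsquares}. So I would interpret $\S_w\S_v$ as a sum of weights over pairs of biwords $(P,Q)$ with $\mathcal{L}$-recording chain of $P$ equal to $\ch(w)$ and $\mathcal{R}$-recording chain of $Q$ equal to $\chd(v)$, and then perform all right insertions of $Q$ followed by all left insertions of $P$ into the identity BPD.

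Next comes the core step: fixing a BPD $D\in\BPD(w)$ coming from $P$, I want to show that the right insertions of the letters of $Q$ applied to $D$ are governed by a growth diagram. Each right insertion of a letter $y_l$ into a BPD of some permutation $\delta$ produces a BPD of a permutation $\delta t_{\alpha\beta}$ with $\alpha\le l<\beta$, i.e.\ a cover $\delta\lessdot_l \delta t_{\alpha\beta}$; running through $Q$ builds a mixed $k$-chain $\mathbf{d}$ from $w$ to $u$. The assertion is that this chain, together with $\ch(w)$, fills a growth diagram whose left column is $\jdt_{\ch(w)}(\mathbf{d})$. To see this, I would induct on the squares of the diagram using Corollary~\ref{cor:uniquebpd}: in each square the local hypotheses of Lemma~\ref{lem:square} hold (propagated by Corollary~\ref{cor:allsquares} from $d_1(w)\ge d_2(v)$), hence left insertion of the relevant $x_k$ and right insertion of the relevant $y_l$ commute and the BPD at the top-right corner of the square is determined; chasing the permutations around the square shows the permutation data satisfies the growth-diagram square condition (d). Thus the map $(P,Q,D)\mapsto$ (the growth diagram) together with the residual BPD data is well-behaved, and by Theorem~\ref{thm:comm} the residual BPD is independent of which square-filling order we used.

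Finally I would assemble the count. For a fixed growth diagram $G\in\mathcal{G}_{w,v}^u$, the left column is some chain $\mathbf{e}=\chd(v)$ (by the defining condition of $\mathcal{G}_{w,v}^u$) and the bottom row is $\ch(w)$; by the bijections $\overline{\mathcal{L}}(\cdot,\ch(w))$ and $\overline{\mathcal{R}}(\cdot,\chd(v))$, the number of $(P,Q,D)$ yielding $G$, weighted by $\wt(P)\wt(Q)$, is $\sum_{D'\in\BPD(u)}\wt(D')=\S_u$ — here one uses that reading the top row of $G$ as a recording chain and applying the whole sequence of insertions to the identity lands, bijectively with the weight bookkeeping, in $\BPD(u)$, because repeatedly applying Theorem~\ref{thm:comm} lets us reorder all left insertions before all right insertions (or vice versa). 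Summing over $G\in\mathcal{G}_{w,v}^u$ gives $\S_w\S_v=\sum_u|\mathcal{G}_{w,v}^u|\,\S_u$, whence $c_{w,v}^u=|\mathcal{G}_{w,v}^u|$ by linear independence of Schubert polynomials. The equivalent reformulation in terms of mixed $k$-chains $\mathbf{d}$ from $w$ to $u$ with $\jdt_{\ch(w)}(\mathbf{d})=\chd(v)$ is immediate from the fact that a growth diagram is uniquely determined by its bottom row and right column, so $G\leftrightarrow\mathbf{d}$ (its right column) is a bijection onto exactly those chains whose $\jdt$ along $\ch(w)$ is $\chd(v)$.

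The step I expect to be the main obstacle is showing rigorously that the right-insertion sequence for $Q$ applied to a fixed $D\in\BPD(w)$ actually traces out the right column $\to$ left column of a genuine growth diagram — i.e.\ that the permutation-level square condition (d) is forced by the BPD-level commutativity of Theorem~\ref{thm:comm} together with the length/cover bookkeeping. This requires carefully matching the "unique $x$ in the open Bruhat interval" description in condition (d) with the permutation output of the commuting insertions in Corollary~\ref{cor:uniquebpd}, and checking the degenerate case where $x$ does not exist (so the two diagonal neighbors coincide) corresponds to the insertion simply passing a letter through unchanged. A secondary subtlety is verifying that $\ch(w)$ and $\chd(v)$ are valid recording chains for left and right insertion respectively in the sense required by Theorems~\ref{thm:leftRSK} and~\ref{thm:rightRSK} — namely that every mixed $k$-chain arises as a recording chain — but this should follow directly from the surjectivity half of those bijections.
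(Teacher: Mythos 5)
Your plan is correct and follows essentially the same route as the paper: both reduce the theorem to a weight-preserving bijection between $\BPD(w)\times\BPD(v)$ and $\coprod_u\BPD(u)\times\mathcal{G}_{w,v}^u$, built square-by-square from Corollary~\ref{cor:allsquares}, Corollary~\ref{cor:uniquebpd} and Theorem~\ref{thm:comm}, with your biword/linear-independence phrasing being equivalent to the paper's direct bijection on bumpless pipe dreams. The obstacle you flag (that the insertion-generated permutation array satisfies the growth-diagram square condition) is the one point the paper treats by running the construction primarily in the opposite direction --- starting from a genuine growth diagram together with $D_u$ and applying the inverse insertions of Corollary~\ref{cor:uniquebpd} --- so that the forward insertions are its inverse by construction.
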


\begin{proof}
 Since $d_1(w)\ge d_2(v)$, $l_1\le k_1$ by the construction of the up-chain and the down-chain. Then by Corollary~\ref{cor:allsquares}, for each $1\le j\le \ell(v)$ we have $l_j\le d_1(\pi_{i,j-1})$ for all $i$, and for each $1\le i\le \ell(w)$ we have $k_i\ge d_2(\pi_{i-1,j})$ for all $j$. Namely, every square satisfies the conditions in Lemma~\ref{lem:square}.

We are then ready to construct a bijection between $\BPD(w)\times \BPD(v)$ with the set $\coprod_u  \BPD(u)\times \mathcal{G}_{w,v}^u$. 

Let $u$ be a permutation such that $\mathcal{G}_{w,v}^u\neq \emptyset$, and suppose $(\pi_{i,j},k_i,l_j)\in\mathcal{G}_{w,v}^u$. Let $D_u\in \BPD(u)$. By Corollary~\ref{cor:allsquares}, we may inductively apply Corollary~\ref{cor:uniquebpd} to each square in the growth diagram and find a unique $D_{i,j}\in \BPD(\pi_{i,j})$ for each $\pi_{i,j}$, a unqiue biletter $(a_{i})_{k_i}$ for each column of horizontal edges $\pi_{i-1,j}\lessdot_{k_i}\pi_{i,j}$, and a unique biletter $(b_{j})_{l_j}$ for each row of vertical edges $\pi_{i,j-1}\lessdot_{l_j}\pi_{i,j}$. Now pick any $\pi_{x,y}$ in the growth diagram. The construction ensures that starting at the identity BPD, for each lattice path consisting of north steps and east steps from $\id=\pi_{0,0}$ to  $\pi_{x,y}$ in the growth diagram, if whenever we take an east step we perform a left insertion with the biletter $a_k$ for that edge, and whenever we take a north step we perform a right insertion with the biletter $b_l$ for that edge, we will always arrive at the same $D_{x,y}\in\BPD(\pi_{x,y})$, regardless of the path taken. In particular, this uniquely determines $D_{\ell(w),0}\in \BPD(\pi_{\ell(w),0})=\BPD(w)$ and $D_{0,\ell(v)}\in\BPD(\pi_{0,\ell(v)})=\BPD(v)$.

Conversely, let $D_w=D_{\ell(w),0}\in \BPD(\pi_{\ell(w),0})=\BPD(w)$ and $D_v=D_{0,\ell(v)}\in\BPD(\pi_{0,\ell(v)})=\BPD(v)$. Compute $\overline{\mathcal{L}}(D_w,\ch(w))=(a_{\ell(w)})_{k_{\ell(w)}}\cdots(a_2)_{k_2}(a_1)_{k_1}$ and $\overline{\mathcal{R}}(D_v,\chd(v))=(b_1)_{l_1}(b_2)_{l_2}\cdots (b_{\ell(v)})_{l_{\ell(v)}}$. (Note that these computations are the contents of Theorem~\ref{thm:leftRSK} and \ref{thm:rightRSK}.) Now pick any $\pi_{x,y}$ in the growth diagram. By Theorem~\ref{thm:comm} and Corollary~\ref{cor:allsquares}, starting from the identity BPD, for each lattice path consisting of north steps and east steps from $\id=\pi_{0,0}$ to $\pi_{x,y}$, in the growth diagram, if whenever we take an east step from $\pi_{i-1,j}$ to $\pi_{i,j}$ we perform a left insertion with the biletter $(a_i)_{k_i}$, and whenever we take a north step from $\pi_{i,j-1}$ to $\pi_{i,j}$ we perform a right insertion with the biletter $(b_j)_{l_j}$, we will always arrive at the same $D_{x,y}\in \BPD(\pi_{x,y})$, regardless of the path taken. In particular, this uniquely determines $D_u=D_{\ell(w),\ell(v)}\in \BPD(u)$.

It is clear by construction that the two directions are inverses of each other. 
\end{proof}
\begin{rmk}
The choices of chains for $w$ and $v$ are not unique, and we gave one specific construction above. In general, when $w$ and $v$ have separated descents with $d_1(w)\ge d_2(v)$, as long as we choose a chain for $w$ where the $k$'s are weakly increasing, a chain for $v$ where the $l$'s are weakly decreasing, and $k_1\ge l_1$, the theorem will still hold.
\end{rmk}
\begin{ex}
\label{ex:sepdescgrowth}
In Figure~\ref{fig:sepdescgrowth} we show an example of a growth diagram for $w=13542$ and $v=1432$. 
The Schubert product expansion for $\S_w\S_v$ is
\[\S_{13542}\S_{1432}=\S_{34521}+\S_{25431}+\S_{35412}+\S_{246315}+\S_{263415}+\S_{156324}+\S_{164325}.\]
\end{ex}
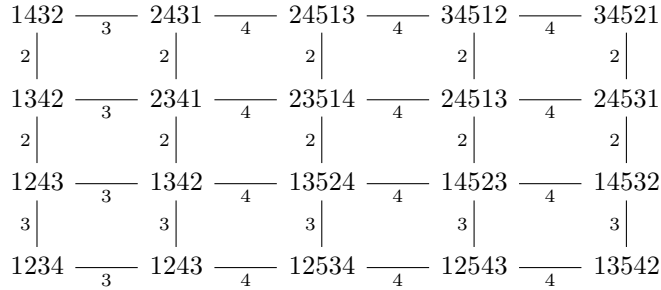
\begin{figure}[h!]
    \centering
    \[\begin{tikzcd}
	1432 & 2431 & 24513 & 34512 & 34521 \\
	1342 & 2341 & 23514 & 24513 & 24531 \\
	1243 & 1342 & 13524 & 14523 & 14532 \\
	1234 & 1243 & 12534 & 12543 & 13542
	\arrow["3", no head, from=4-1, to=3-1]
	\arrow["2", no head, from=3-1, to=2-1]
	\arrow["2", no head, from=2-1, to=1-1]
	\arrow["3", no head, from=4-2, to=3-2]
	\arrow["2", no head, from=3-2, to=2-2]
	\arrow["2", no head, from=2-2, to=1-2]
	\arrow["3", no head, from=4-3, to=3-3]
	\arrow["2", no head, from=3-3, to=2-3]
	\arrow["2", no head, from=2-3, to=1-3]
	\arrow["2", no head, from=2-4, to=1-4]
	\arrow["2", no head, from=3-4, to=2-4]
	\arrow["3", no head, from=4-4, to=3-4]
	\arrow["2", no head, from=2-5, to=1-5]
	\arrow["2", no head, from=3-5, to=2-5]
	\arrow["3", no head, from=4-5, to=3-5]
	\arrow["3"', no head, from=1-1, to=1-2]
	\arrow["3"', no head, from=2-1, to=2-2]
	\arrow["3"', no head, from=3-1, to=3-2]
	\arrow["3"', no head, from=4-1, to=4-2]
	\arrow["4"', no head, from=1-2, to=1-3]
	\arrow["4"', no head, from=2-2, to=2-3]
	\arrow["4"', no head, from=3-2, to=3-3]
	\arrow["4"', no head, from=4-2, to=4-3]
	\arrow["4"', no head, from=4-3, to=4-4]
	\arrow["4"', no head, from=3-3, to=3-4]
	\arrow["4"', no head, from=2-3, to=2-4]
	\arrow["4"', no head, from=1-3, to=1-4]
	\arrow["4"', no head, from=1-4, to=1-5]
	\arrow["4"', no head, from=2-4, to=2-5]
	\arrow["4"', no head, from=3-4, to=3-5]
	\arrow["4"', no head, from=4-4, to=4-5]
\end{tikzcd}\]
    \caption{A growth diagram for $w=13524$ and $v=1432$}
    \label{fig:sepdescgrowth}
\end{figure}

\section{Commutativity of Left and Right Insertion}
In this section, we prove Theorem~\ref{thm:comm}.

\begin{lemma}
\label{lem:leftnoswap}
Let $D\in\BPD(\pi)$. Let $x_k$ be a biletter with $k\ge d_2(\pi)$. Then left insertion of $x_k$ into $D$ never goes into step 2(b), or $\cbswap$ in step 3(b).
\end{lemma}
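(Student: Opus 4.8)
\textbf{Proof proposal for Lemma~\ref{lem:leftnoswap}.}

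The plan is to analyze the left insertion algorithm by tracking an invariant on the row index of the ``active'' tile throughout the process, and show that this invariant forces the algorithm to avoid step 2(b) and the $\cbswap$ branch of step 3(b). The key observation I would exploit is that step 2(b) is precisely the branch where the active $\jt$ sits in a row $i_1 > k$, and the $\cbswap$ branch of step 3(b) is precisely where one of the two pipes through the active $\bt$ exits from a row $> k$; in both cases the hypothesis $k \ge d_2(\pi)$ should be incompatible with the configuration actually reached. First I would set up notation: as we run the algorithm, we maintain a position $(i,j)$, and the algorithm performs a $\mindroop$ which sends $(i,j) \mapsto (i_1, j_1) = \droop(i,j)$ with $i_1 > i$. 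So the row index of the active tile is \emph{non-decreasing} only across $\mindroop$ moves, but steps 2 and 3 may move it around within the same row $i_1$. The real content is to understand what rows can be reached at all.

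Next, I would invoke the structure of a bumpless pipe dream of $\pi$ in rows below $d_2(\pi)$. Since $d_2(\pi)$ is the last descent of $\pi$, we have $\pi(d_2(\pi)+1) < \pi(d_2(\pi)+2) < \cdots$, i.e. $\pi$ is increasing after position $d_2(\pi)$. A standard fact about BPDs (which I would state and justify briefly, or cite from \cite{LLS}) is that in any $D \in \BPD(\pi)$, the tiles in rows strictly below the last descent are highly constrained: roughly, below row $d_2(\pi)$ there are no $\bl$-tiles in the relevant region and the pipes are already ``sorted,'' so that a pipe whose $\rt$-turn lies in such a low row must exit from a correspondingly low row. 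The precise claim I want is: whenever the active tile during left insertion lies in row $r$, the pipe carrying its $\rt$-turn (or the upper pipe of a $\bt$) exits in a row $\le \max(r, \text{something} \le k)$, and more to the point, the algorithm never needs to place the active tile in a row $> k$. I would prove this by induction on the steps of the algorithm: the initial $\rt$ is on row $b \le k$ (since $b_k$ is a biletter, $b \le k$), so the active row starts $\le k$; then I check that none of steps 1, 2(a), 3(a) — the steps we are allowed to use — can push the active row above $k$ given $k \ge d_2(\pi)$, essentially because a $\mindroop$ landing in a row $> k > d_2(\pi)$ would have to land on a $\rt$ or $\vtile$ (not a $\jt$ or $\bt$) by the sortedness of $D$ below $d_2(\pi)$, which means we never enter the branches 2(b) or 3(b)-swap.

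I expect the main obstacle to be making precise and rigorous the claim that in a BPD of $\pi$, rows below $d_2(\pi)$ are ``sorted'' in the sense needed — specifically, that no $\jt$ or $\bt$ whose distinguished pipe exits above row $k$ can appear as a droop target once $k \ge d_2(\pi)$. This requires careful bookkeeping with how pipes behave in the NE-going BPD convention: a $\jt$ at $(i_1,j_1)$ means two pipes turn there, and I need to argue about where they exit relative to $i_1$ and $d_2(\pi)$. I would handle this by a local analysis of the pipe that has the $\rt$-turn at the relevant tile: if it exits from row $r$, then because the algorithm only ever droops a pipe that turns on row $b \le k$ and the droop moves strictly downward and rightward, combined with the no-double-crossing condition, one shows $r \le k$ throughout — so the ``$r \le k$'' sub-case of 3(b) (which is harmless, it just continues) is the only one that triggers, and 2(b) never triggers since that requires $i_1 > k$. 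A clean way to phrase the induction is to carry the hypothesis ``every active tile is in a row $\le k$, and every pipe we have drooped so far exits from a row $\le k$'' and verify it is preserved by steps 1, 2(a), 3(a); then 2(b) and 3(b)-swap are unreachable by inspection of their triggering conditions.
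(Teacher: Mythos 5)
Your proposal has a genuine gap: it never engages with the condition that actually triggers the $\cbswap$ in step 3(b). Entering step 3(b) --- the pipe with the $\rt$-turn at the active $\bt$ exiting from a row $>k$ --- is not what the lemma forbids; the algorithm \emph{must} enter 3(b) in order to terminate. The $\cbswap$ fires only when, in addition, the two pipes at that $\bt$ have \emph{already crossed}, and the content of this half of the lemma is that this extra condition can never hold. Your proposal contains no argument for it (your description of the ``$\cbswap$ branch'' is really the condition for entering 3(b) at all). The paper's proof handles it by first recording that $D$ has no $\bl$- and no $\jt$-tiles strictly below row $d_2(\pi)$, and then observing that the first active tile strictly below row $d_2(\pi)$ must be a $\bt$, and that two pipes meeting there with a crossing further below would force a $\jt$-turn strictly below row $d_2(\pi)$ --- a contradiction. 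You would need to supply this (or an equivalent monotonicity argument about the two pipes through the $\bt$) to close the proof.

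Second, your central invariant --- that the active tile never reaches a row $>k$ --- is both stronger than needed and unsupported as stated. The justification offered, that a $\mindroop$ landing strictly below row $d_2(\pi)$ ``would have to land on a $\rt$ or $\vtile$ (not a $\jt$ or $\bt$),'' confuses pre- and post-droop tile types: a droop landing on an existing $\rt$-tile is exactly what \emph{produces} a $\bt$ and sends the algorithm into step (3), so droops below row $d_2(\pi)$ routinely create $\bt$-tiles there, and ruling out the swap at such a $\bt$ is precisely the open issue above. What is true, and is all you need for step 2(b), is that a droop landing strictly below row $d_2(\pi)$ cannot land on a former $\bl$ and hence cannot produce a $\jt$, so step (2) is only ever entered in rows $\le d_2(\pi)\le k$. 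That salvages the 2(b) half of your argument (and is essentially the paper's argument for that half), but you should state the structural fact about rows below the last descent precisely rather than ``roughly,'' since the absence of $\jt$-tiles there is also exactly what the missing crossing argument relies on.
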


\begin{proof}
Since $d_2$ is the last descent of $\pi$, $D$ has no $\bl$ of $\jt$ strictly below row $d_2$. Therefore, step (2) can only be entered when the active tile is at or above $d_2$, and since $d_2\le k$, (2)(b) is never triggered. Consider the first time an active tile reaches below row $d_2$. This tile must be a $\bt$. If a $\cbswap$ is needed at this point, then there must be a $\jt$ below $d_2$, which is impossible. Therefore, once the active tile reaches below $d_2$, the procedure would enter (3)(a) for zero or more times, and terminate once it enters (3)(b).
\end{proof}

\begin{lemma}
\label{lem:droopwid}
Suppose $D\in\BPD(\pi)$, and let $y_l$ be a biletter with $l\le d_1(\pi)$. Then during the right insertion of $y_l$ into $D$, every $\mindroop$ is performed on a pipe $p$ such that $p$ does not contain any horizontal segment in a $\+$. In particular, this means that every $\mindroop$ is bounded by a width 2 rectangle. 
\end{lemma}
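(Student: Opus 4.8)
The statement is Lemma~\ref{lem:droopwid}: during a right insertion of $y_l$ with $l \le d_1(\pi)$ into $D \in \BPD(\pi)$, every $\mindroop$ is performed on a pipe $p$ that contains no horizontal segment sitting inside a $\+$ tile, so the droop is confined to a $2$-wide rectangle. I plan to prove this by induction on the steps of the right insertion algorithm, tracking an invariant on the active tile: namely that the pipe having its $\rt$-turn at the active tile, call it $p$, has no horizontal crossing segment to the \emph{left} of the active tile in the rows it occupies, so that the $\mindroop$ at the active tile extends $p$ only one column to the right (equivalently, the ``$y$'' in the $\droop$ computation equals the distance to the nearest tile that is not a $\+$, and that tile is reached after exactly one column because the row of the $\rt$-turn has no $\+$ crossings immediately to its right in the relevant range).

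\textbf{Key steps.} First I would establish the base case: the initial active tile is the $\rt$ on row $b = b(y_l)$ with $j$ maximal. Because $l \le d_1(\pi)$, the rows $1, \dots, d_1(\pi)$ of any BPD of $\pi$ look like the identity to the left of the ``staircase'' — more precisely, since $\pi$ is increasing on positions $1, \dots, d_1(\pi)$, pipe $b$ (for $b \le d_1(\pi)$, which holds because $b \le l \le d_1(\pi)$) goes straight up then turns once, and there are no $\+$ tiles with a horizontal segment on pipe $b$ before its rightmost $\rt$-turn; so the first $\mindroop$ is width-$2$-bounded. Second, for the inductive step, I would analyze how the active tile moves: in step (2) the active tile moves from a $\jt$ to the $\rt$ with $j_2 < j_1$ maximal on the same row $i_1$; in step (3a) a $\cbswap$ moves it to the crossing location; in step (3b) it moves down to a $\bt$ on the same column. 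In each case I must check that the new active tile's $\rt$-turn pipe still has no ``bad'' horizontal segment in the rows it will droop through. The crucial geometric fact I'd use is Lemma~\ref{lem:droopwid}'s own consequence recursively: once every prior $\mindroop$ was width-$2$, the pipes stay ``thin,'' and the descent hypothesis $l \le d_1(\pi)$ combined with the fact that right insertion only ever moves the active tile into rows $\ge$ the current one (droops go down) means we never create or encounter a wide horizontal run on the active pipe. I would also invoke Lemma~\ref{lem:leftnoswap}'s dual reasoning if needed — that the structure above $d_1$ is rigid.

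\textbf{Main obstacle.} The hard part will be the inductive step at $\cbswap$ (step 3a) and at the $\jt$-to-$\rt$ transition (step 2): I need to argue that after a $\cbswap$, or after hopping left along a row to the previous $\rt$, the new pipe carrying the $\rt$-turn still has the no-horizontal-$\+$-segment property in precisely the rows its next droop will traverse. This requires a careful local analysis of the two pipes through the $\bt$ and how their crossing sits relative to the column of the active tile, and showing that the $l \le d_1(\pi)$ hypothesis prevents the droop's target rows from containing a $\+$ on the active pipe. I expect to phrase the invariant precisely as: ``at every stage, between the row of the active tile's $\rt$-turn and the row immediately below, the active pipe occupies a single column on the turn row and the next non-$\+$ tile down the column is in the adjacent column,'' and then verify this is preserved. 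The termination-side steps (3b and the $\+$-creation) are easy since they only shrink things. Once the invariant is established, ``bounded by a width-$2$ rectangle'' is immediate from the definition of $\mindroop$: $\droop(a,b) = (a+x, b+y)$ with $y = 1$ precisely when the tile at $(a, b+1)$ is not a $\+$, which is what the invariant guarantees.
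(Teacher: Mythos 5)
Your overall strategy---an induction over the steps of the algorithm carrying a local geometric invariant about the shape of the active pipe---is not how the paper argues, and as written it contains a genuine gap: the entire content of the lemma is concentrated in the inductive step that you explicitly defer (preservation of the invariant under a $\cbswap$ and under the hop from a $\jt$ back to the previous $\rt$ in step (2)), and that verification is never carried out. Several of the auxiliary claims you lean on are also imprecise or false as stated: rows $1,\dots,d_1(\pi)$ of a BPD need not ``look like the identity'' (for $\pi=1432$ the pipe exiting at row $2$ enters at column $4$); the initial active pipe need not ``go straight up then turn once''; and the active tile does not only move weakly downward, since step (2) moves it left along a row and a $\cbswap$ relocates it to the crossing tile, so that monotonicity would itself require proof.

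The paper's proof avoids the induction entirely and is essentially three lines; the contrast is instructive. It combines two global facts. First, throughout right insertion the active pipe $p$ always exits from a row at or above $l$ (the initial $\rt$ is in row $b\le l$, and the algorithm---see the proof of Proposition~\ref{prop:right}---only preserves or decreases the exit row of the active pipe; step (3b) even tests this condition explicitly before continuing). Second, if $p$ occupied the horizontal segment of some $\+$, the pipe $q$ in its vertical segment would exit strictly above $p$ while entering strictly to its right, so the exit rows of $q$ and $p$ form an inversion of $\pi$ and force a descent strictly before the exit row of $p$; hence $d_1(\pi)<\pi^{-1}(p)\le l\le d_1(\pi)$, a contradiction. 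Note that the hypothesis $l\le d_1(\pi)$ enters only here, through the exit row of the active pipe, not through any statement about the local geometry of rows $1,\dots,d_1(\pi)$. If you wish to keep your step-by-step framing, the invariant worth carrying is precisely ``the active pipe exits from a row $\le l$,'' after which the width-$2$ conclusion is immediate with no case analysis on $\cbswap$ or the $\jt$-to-$\rt$ transitions.
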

\begin{proof}
Suppose that during the right insertion of $y_l$ into $D$, the active tile is at $(i,j)$ and a $\mindroop$ at $(i,j)$ is the next move. Suppose the pipe that passes through the $\rt$-turn at $(i,j)$ is $p$. 
Then if $p$ passes through a horizontal segment in a $\+$, the pipe $q$ that contains the vertical segment of this tile must satisfy $p<q$ and $\pi^{-1}(q) < \pi^{-1}(p)$. Therefore, $d_1(\pi)$ must be at most $\pi^{-1}(q)$. However, during the right insertion of $y_l$, the active pipe $p$ must always exits from a row at or above $l$. Therefore, $\pi^{-1}(p)\le l\le d_1(\pi) \le \pi^{-1}(q)$, which is a contradiction.
\end{proof}

\begin{lemma}
Suppose $D\in\BPD(\pi)$, $d_1:=d_1(\pi)$, $d_2:=d_2(\pi)$, $k\ge d_2$, and $l\le d_1$. Then
the first descent of $\perm(x_k\rightarrow D)$ is no smaller than $d_1$, and the last descent of $\perm(D\leftarrow y_l)$ is not larger than $d_2$. 
\end{lemma}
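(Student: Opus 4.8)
The plan is to analyze how the single transposition introduced by each insertion can affect the extreme descent positions of the underlying permutation, and to show that the constraints $k \ge d_2$ and $l \le d_1$ prevent the descent set from growing in the wrong direction. I would state the goal as two symmetric claims: if $\perm(x_k \rightarrow D) = \pi t_{\alpha\beta}$ with $\alpha \le k < \beta$, then $d_1(\pi t_{\alpha\beta}) \ge d_1(\pi)$; and if $\perm(D \leftarrow y_l) = \pi t_{\alpha'\beta'}$ with $\alpha' \le l < \beta'$, then $d_2(\pi t_{\alpha'\beta'}) \le d_2(\pi)$. These are purely statements about covers in $k$-Bruhat order, so the insertion algorithms themselves are only needed to certify that $x_k \rightarrow D$ is a BPD for $\pi t_{\alpha\beta}$ with $\alpha \le k < \beta$ (which is the content of Proposition~\ref{prop:left}) — the actual geometry of the tiles should not matter for this lemma.

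First I would handle the right-insertion half. Write $\rho = \pi t_{\alpha\beta}$ with $\alpha \le l < \beta$. The values $\rho$ and $\pi$ agree outside positions $\alpha$ and $\beta$, and $\rho(\alpha) = \pi(\beta) > \pi(\alpha) = \rho(\beta)$ (the cover condition forces $\pi(\alpha) < \pi(\beta)$). I want to show $\rho$ is strictly increasing on positions $> d_2 := d_2(\pi)$. If $\beta \le d_2$, the positions beyond $d_2$ are untouched, so there is nothing to prove. If $\beta > d_2$, then since $\pi$ is strictly increasing past $d_2$, the only positions where a descent of $\rho$ past $d_2$ could appear are near $\alpha$ or near $\beta$; but $\alpha \le l \le d_1(\pi) \le d_2$, so $\alpha$ is not past $d_2$, and the only candidate is $\beta$ itself or $\beta - 1$. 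A short case check on $\rho(\beta-1)$ versus $\rho(\beta)$ and $\rho(\beta)$ versus $\rho(\beta+1)$ — using that $\pi$ is increasing on $[d_2+1, n]$ and $\rho(\beta) = \pi(\alpha)$ — rules out a descent there. This is essentially the same bookkeeping already carried out in the proof of Lemma~\ref{lem:square}, so I would either mimic that argument or, better, directly invoke Lemma~\ref{lem:square}: taking $\delta = \pi$, the hypotheses $l \le d_1(\pi)$, $k \ge d_2(\pi)$ are exactly what that lemma needs, and its conclusion gives the descent bounds on the relevant neighbor of $\delta$ in the square.

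The left-insertion half is symmetric: with $\rho = \pi t_{\alpha\beta}$, $\alpha \le k < \beta$, I want $\rho$ strictly increasing on $[1, d_1(\pi) - 1]$, equivalently $d_1(\rho) \ge d_1(\pi)$. The relevant case is $\alpha < d_1(\pi)$ (if $\alpha \ge d_1$ there is nothing below $d_1$ that changes except possibly at $\alpha$, and one checks that too); here $\pi$ is increasing on $[1, d_1]$, $\rho(\alpha) = \pi(\beta) > \pi(\alpha)$, and since $\pi(\alpha) < \pi(\alpha+1)$ one gets $\pi(\beta) > \pi(\alpha+1)$ as well, so no descent is created at $\alpha$; positions left of $\alpha$ are unchanged. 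Again this matches the "$l \le d_1(\pi)$" computation in Lemma~\ref{lem:square}, applied with the roles of the two directions swapped. The main — and really the only — obstacle is making sure the hypotheses of Lemma~\ref{lem:square} (or the ad hoc case analysis) are set up with the correct $\delta$, $\pi$, $l$, $k$ so that the cover produced by the insertion algorithm is the cover whose descent behavior that lemma controls; once the correspondence $x_k \rightarrow D \in \BPD(\pi t_{\alpha\beta})$ with $\alpha \le k < \beta$ is in hand from Proposition~\ref{prop:left} (and the analogous fact for right insertion from Proposition~\ref{prop:right}), the descent estimate is forced. I would therefore present the proof as: "Apply Proposition~\ref{prop:left}/\ref{prop:right} to see the insertions correspond to $k$-Bruhat covers, then apply the computation in the proof of Lemma~\ref{lem:square} with $\delta = \pi$," spelling out only the handful of inequalities on permutation values.
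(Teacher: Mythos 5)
Your approach is essentially the paper's: both reduce the statement to a fact about a single cover $\pi\lessdot\pi t_{\alpha\beta}$ in $k$-Bruhat order (the insertion algorithms enter only through the guarantee, from Propositions~\ref{prop:left} and \ref{prop:right}, that they output a BPD of such a cover) and then check where a new extreme descent could appear. Two corrections are needed, though. First, in the left-insertion half you claim that $\pi(\alpha)<\pi(\alpha+1)$ gives $\pi(\beta)>\pi(\alpha+1)$ ``so no descent is created at $\alpha$''; the inequality you need is the reverse one, $\pi(\beta)<\pi(\alpha+1)$ (as written, $\rho(\alpha)=\pi(\beta)>\pi(\alpha+1)=\rho(\alpha+1)$ would \emph{be} a descent at $\alpha$), and it follows not from monotonicity of $\pi$ alone but from the cover condition: if $\pi(\alpha)<\pi(\alpha+1)<\pi(\beta)$ with $\alpha+1<\beta$, then $\pi t_{\alpha\beta}$ would not cover $\pi$. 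The same no-intermediate-value argument is what rules out a descent at $\beta-1$ in the right-insertion half; ``$\pi$ is increasing on $[d_2+1,n]$'' only gives $\pi(\beta-1)<\pi(\beta)$, not the needed $\pi(\beta-1)<\pi(\alpha)$. Second, you cannot literally invoke the \emph{statement} of Lemma~\ref{lem:square}: its conclusion only bounds the descents of the new permutation by $l$ and $k$, whereas the present lemma asserts the stronger monotonicity $d_1(\perm(x_k\rightarrow D))\ge d_1(\pi)$ and $d_2(\perm(D\leftarrow y_l))\le d_2(\pi)$; what you actually need is the computation inside that lemma's proof (which you do offer as the alternative), and that computation is exactly the contradiction argument the paper runs here.
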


\begin{proof}
Suppose $\perm(x_k\rightarrow D)=\pi\,t_{\alpha,\beta}$ where $\alpha\le k<\beta$. If the first descent of  $\pi\,t_{\alpha\beta}$ is smaller than $d_1$, then it must be the case that $\alpha<d_1$. But since $\pi\,t_{\alpha\beta}\gtrdot \pi$ and $\pi(\alpha)<\pi(d_1)$, it must be the case that $\beta \le d_1$, which is a contradiction since $\beta > k\ge d_2\ge d_1$. The second part follows by a similar argument.
\end{proof}

 By Lemma~\ref{lem:leftnoswap}, the left insertion  of $x_k$ into $D$ (or into $D\leftarrow y_l$) is a sequence of $\mindroop$s, followed by a terminal move that swaps a $\bt$ for a $\+$. 
 %We call these moves \emph{left moves}. 
 For right insertion, we define $\maxdroop$ as a maximal sequence of consecutive $\mindroop$s at tiles in a same column, and $\maxd$ its corresponding partial map on coordinates. See Figure~\ref{fig:minmaxdroop} for an illustration. By Lemma~\ref{lem:droopwid}, every $\maxdroop$ is bounded by a width $2$ rectangle. We consider the right insertion of $x_k$ into $D$ (or $(x_k\rightarrow D)$) as a sequence of $\maxdroop$ and $\cbswap$, followed by a terminal move that swaps a $\bt$ for a $\+$. %We call these moves \emph{right moves}. 

\begin{figure}[h!]
    \centering
    \includegraphics[scale=0.6]{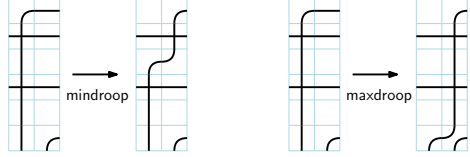}
    \caption{Comparing $\mindroop$ and $\maxdroop$}
    \label{fig:minmaxdroop}
\end{figure}
We define a \emph{doubly activated BPD} to be a bumpless pipe dream where at most two tiles are allowed to be $\bt$. For a doubly activated BPD $E$, let $E(i,j)$ denote the tile at position $(i,j)$ of $E$.  In order to reason about commutativity of the left and right insertion algorithms, we keep track of a marker for right insertion and a marker for left insertion.   We define an \emph{insertion state} to be a tuple $(E, \red, \yellow,u)$ where $E$ is a doubly activated bumpless pipe dream and $\red$ and $\yellow$ are coordinates of markers for left and right insertions such that $E(\red)=\rt$, $\jt$, or $\bt$; $E(\yellow)=\rt$ or $\bt$; $u=\rt$ or $\jt$, with constraints that when $u=\rt$, $E(\red)$ must be $\rt$ or $\bt$, and when $u=\jt$, $E(\red)$ must be $\jt$ or $\bt$. In other words, we insist that the left marker is always on a $\rt$-turn of a pipe in a $\rt$- or $\bt$-tile, whereas the right marker  is on a $\rt$-turn or $\jt$-turn of a pipe in a $\rt$, $\jt$, or $\bt$-tile. For this reason, we let $u=\rt$ or $\jt$  record whether the right marker is on the SE or NW elbow in the case when $\red$ is a $\bt$-tile. The reason we don't make this distinction for the left marker is because of Lemma~\ref{lem:leftnoswap}. We also allow $\red$ and $\yellow$ to take $\emptyset$ as a value, signaling that the corresponding right or left insertion has terminated. Finally, if $E(i,j)=\bt$, we require that $\red=(i,j)$ or $\yellow=(i,j)$. Namely, any $\bt$-tile must contain at least one marker. In all the pictures that follow, the right marker is represented with a solid dot and the left marker is represented with a circle.

\begin{ex}
We show some examples of doubly activated BPDs. For example in the second diagram, $\red=(2,4)$, $\yellow=(2,2)$, and $u=\jt$.
\ \\

\noindent%
\begin{minipage}{\linewidth}% to keep image and caption on one page
\makebox[\linewidth]{%        to center the image
  \includegraphics[scale=0.6]{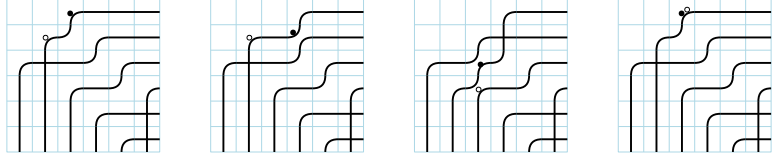}}
\captionof{figure}{Doubly activated BPDs}\label{fig:doublyactivated}%      only if needed  
\end{minipage}
\end{ex}

We say that an insertion state $(E,\red,\yellow, u)$ is \emph{admissible} if the following forbidden configurations (Figure~\ref{fig:forbidden}) do not exist. 
\begin{enumerate}[(1)]
    \item $\red=(i,j)$, $E(i,j)=\rt$ and $E(i,j+1)=\jt$
    \item $\red=\yellow=(i,j)$, $u=\rt$, $E(i,j)=\bt$, and $E(i,j+1)=\jt$.
    \item If the right marker is on pipe $p$ and pipe $p$ occupies the $\htile$-segment in a $\+$.
\end{enumerate}
 \begin{figure}[h]
     \centering
     \begin{subfigure}{.5\textwidth}
     \centering
     \includegraphics[scale=0.9]{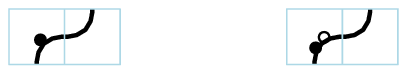}
     \caption*{ (1)}
     \label{fig:forbidden1}
      \end{subfigure}%
     \begin{subfigure}{.5\textwidth}
     \centering
     \includegraphics[scale=0.9]{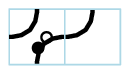}
     \caption*{ (2)}
     \label{fig:forbidden2}
     
     \end{subfigure}
    \caption{Forbidden configurations (1) and (2)}
     \label{fig:forbidden}
 \end{figure}
Note that (1) and (2) come from the definition of $\maxdroop$, and (3) come from the condition on descents for right insertion, 
Given two admissible insertion states $(E_1,\red_1,\yellow_1,u_1)$ and $(E_2,\red_2,\yellow_2,u_2)$, we identify them to be the same state if $E_1=E_2$,  $\red_1=\yellow_2$, $\yellow_1=\red_2$, $\red_1$ and $\yellow_1$ are both $\bt$, $u_1=\rt$, $u_2=\jt$ and either of the following conditions is satisfied:
\begin{enumerate}[(1)]
    \item $\red_1=(i,j)$ for some $(i,j)$, $\yellow_1=(i',j)$ with $i'>i$, and all tiles $(i'',j)$ with $i<i''<i$ are $\+$;
    \item $\red_1=(i,j)$ for some $(i,j)$ and $\yellow_1=(i,j+1)$.
\end{enumerate}
See Figure~\ref{fig:identification}.
\begin{figure}[h]
    \centering
    \includegraphics[scale=0.8]{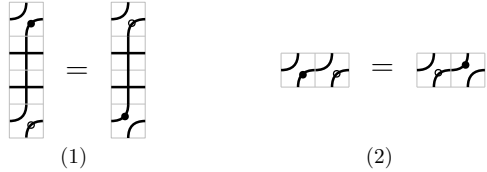}
    \caption{Identification of states}
    \label{fig:identification}
\end{figure}

We will now define a relation $\Rightarrow$ on the set of admissible insertion states that expresses transition rules. A rough overview is as follows. Given an insertion state $(E, \red, \yellow,u)$, typically we may either let the right insertion algorithm advance a step at $\red$, and transition to the state $(E',\red',\tilde{\yellow}, u')$, or let the left insertion algorithm advance a step at $\yellow$ and transition to the state $(E',\tilde{\red},\yellow', \tilde{u})$. In these cases, typically when the left (resp. right) marker advances, the right (resp. left) marker stays put, but sometimes we need to shift the other marker. Occasionally only a left move or only a right move is available from a given state. When both $\red$ and $\yellow$ are $\emptyset$, no more moves apply and the state is terminal.

If $\pi\neq\id$, let $d_1$ be the first descent of $\pi$ and $d_2$ the last descent of $\pi$. Let $l\le d_1$ and $k\ge d_2$. If $\pi=\id$, let $l\le k$. We first illustrate the cases where $\Rightarrow$ is defined by advancing the right marker. Suppose $\red\neq\emptyset$. Let $\pi$ be the permutation associated to the doubly activated BPD $E$. 
\vskip 0.5em

\textbf{Case Rr} (right marker advances from a $\rt$-turn). Suppose $\yellow=(a,b)$. If $E(\red)$ is a $\bt$ or $\rt$, $u=\rt$, let $E':=\maxdroop_E(\red)$, $R':=\maxd_E(\red)$, $u':=\jt$, and (Rr1) if $\yellow \neq \red$, let  $\tilde{\yellow}:=\yellow$, otherwise (Rr2) let $\tilde{\yellow}:=(a,b+1)$. See Figure~\ref{fig:Rr12}.

\begin{figure}[h]
    \centering
    \includegraphics[scale=0.8]{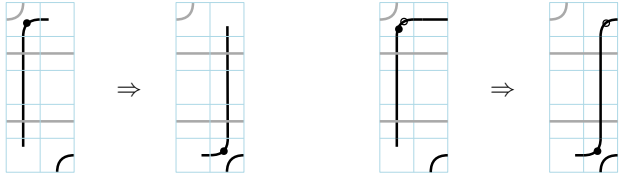}
    \caption{Case Rr1 and Rr2}
    \label{fig:Rr12}
\end{figure}
\vskip 0.5em

\textbf{Case Rj} (right marker advances from a $\jt$-turn).
\vskip 0.5em

(Rj1). Suppose $E(\red)=\jt$ where $\red=(a,b)$. Let $b'<b$ be maximal where $E(a,b')=\rt$ or $\bt$. Then $E':=E$, $\red':=(a,b')$,   $\tilde{\yellow}:=\yellow$, $u':=\rt$. See Figure~\ref{fig:Rj1}.
\begin{figure}[h]
    \centering
    \includegraphics[scale=0.8]{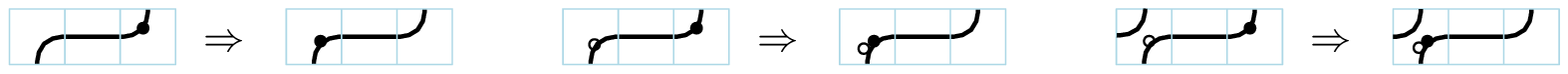}
    \caption{Case Rj1}
    \label{fig:Rj1}
\end{figure}

For (Rj2)--(Rj4) Suppose $E(\red)=\bt$, $u=\jt$, and $\yellow \neq \red$. Suppose $s$ is the pipe that contains the $\jt$-turn in $E(\red)$ and $t$ is the pipe that contains the $\rt$-turn in $E(\red)$. Notice that we must have $\pi^{-1}(s)<\pi^{-1}(t)$ because otherwise we are in the forbidden configuration (3). See Figure~\ref{fig:Rj2-4}.
\vskip 0.5em

(Rj2). Suppose $s<t$ and $\pi^{-1}(t)\le l$. Then let $E':=E$, $\red':=\red$, $\yellow':=\yellow$, and $u'=\rt$.
\vskip 0.5em

(Rj3). Suppose $s<t$ and $\pi^{-1}(t)> l$. Then let $E':=\term_E(\red)$,  $\red':=\emptyset$, $\tilde{\yellow}:=\yellow$, and $u'=\emptyset$. 
\vskip 0.5em

(Rj4). Suppose $s>t$, then the pipes $s$ and $t$ must cross at a tile SW to $\red$. Let $E':=\cbswap_E(\red)$, $\red':=\swap_E(\red)$, $\tilde{\yellow}:=\yellow$, and $u'=:\rt$. 
\begin{figure}[h]
    \centering
    \includegraphics[scale=0.8]{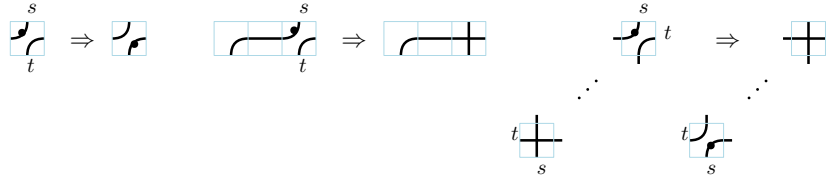}
    \caption{Case Rj2--Rj4}
    \label{fig:Rj2-4}
\end{figure}
\vskip 0.5em

(Rj5). Suppose $\red=(a,b)$, $E(\red)=\bt$, $u=\jt$, and $\yellow= \red$. Let $b'<b$ be maximal where $E(a,b')=\rt$ or $\bt$. Then $E':=E$, $\red':=(a,b')$, $\tilde{\yellow}:=\yellow$, $u':=\rt$. See Figure~\ref{fig:Rj5}.
\begin{figure}[h!]
    \centering
    \includegraphics[scale=0.8]{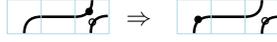}
    \caption{Case Rj5}
    \label{fig:Rj5}
\end{figure}
\vskip 0.5em

We now discuss the cases where $\Rightarrow$ is defined by advancing the left marker. 
\vskip 0.5em
\textbf{Case L} (left marker advances). Suppose $\yellow=(a,b)$.

\vskip 0.5em
(L1). Suppose $E(a,b+1)=\+$, and let $b'>b$ be smallest such that $E(a,b')\neq \+$. Suppose furthermore that $E(a,b')\neq \bt$. Note that in this case it is impossible for $\red=\yellow$ because of forbidden configuration (3). Since $E(a,b')\neq \+$, $\mindroop_E(a,b)$ is defined. Let $\tilde{E}=\mindroop_E(a,b)$ and $(c,d):=\droop_E(a,b)$. If $\tilde{E}(c,d)=\bt$, suppose $t$ is the pipe that contains the SE elbow in $\tilde{E}(c,d)$.  If
$\pi^{-1}(t)>k$ (left insertion is about to terminate), notice that in this case it is impossible to have $\red=(c,d)$, because the right marker cannot be on a pipe that exits from a row after the first descent. We then let $E':=\term_{\tilde{E}}(c,d)$, $\yellow':=\emptyset$, $\tilde{\red}:=\red$, and $\tilde{u}:=u$. In all other cases, left insertion does not terminate, and we let $E':=\tilde{E}$, $\tilde{\red}:=\red$,  $\tilde{u}:=u$, $\yellow':=\nextl(a,b)$,
where $\nextl(a,b)$ is obtained by first computing $(c,d):=\droop_E(a,b)$, then finding $d'>d$ smallest such that $E(c,d')$ is a $\rt$, and finally setting $\nextl(a,b):=(c,d')$. See Figure~\ref{fig:L1}.

\begin{figure}[h!]
    \centering
    \includegraphics[scale=0.6]{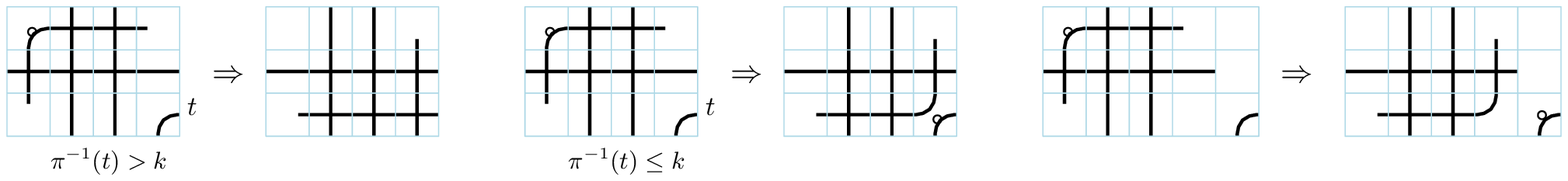}
    \caption{Case L1}
    \label{fig:L1}
\end{figure}

\vskip 0.5em
(L2). Suppose $E(a,b+1)=\htile$. Let $E':=\mindroop_E(a,b)$. If $\yellow=\red$, let $\tilde{\red}:=(a,b+1)$; otherwise $\tilde{\red}:=\red$. Also let $\tilde{u}:=u$. If left insertion does not terminate, we let $\yellow':=\nextl(a,b)$; otherwise we go through the same reasoning as in the previous case and let $E':=\term_{\tilde{E}}(c,d)$, $\yellow':=\emptyset$ where $\tilde{E}$ and $c,d$ are defined the same way as before. See Figure~\ref{fig:L2}.
\begin{figure}[h!]
    \centering
    \includegraphics[scale=0.6]{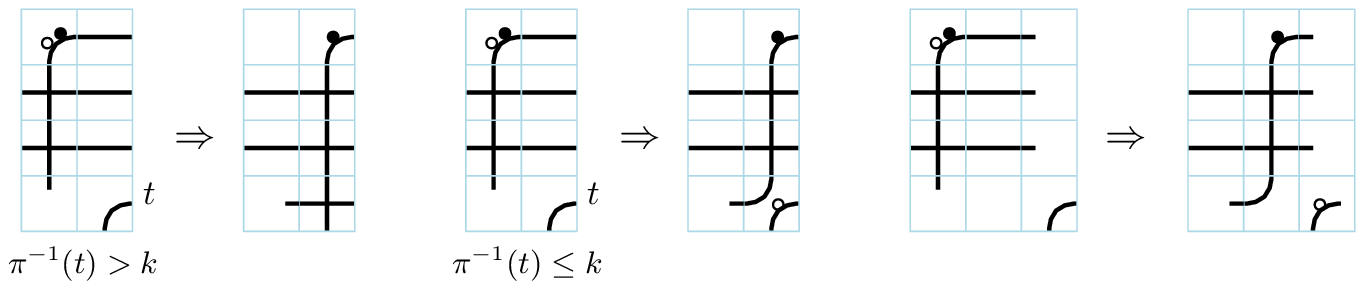}
    \caption{Case L2}
    \label{fig:L2}
\end{figure}

\vskip 0.5em
(L3). Suppose $E(a,b+1)=\jt$. By forbidden configuration (1), we know that $\red\neq \yellow$. Let $\tilde{\red}:=\red$ and $\tilde{u}:=u$. If left insertion does not terminate, we let $E':=\mindroop_E(a,b)$,  $\yellow':=\nextl(a,b)$; otherwise we let $E':=\term_{\tilde{E}}(c,d)$ where $\tilde{E}$ and $c,d$ are defined in the same way as in case (L1). See Figure~\ref{fig:L3}.
\vskip 0.5em

\begin{figure}[h!]
    \centering
    \includegraphics[scale=0.6]{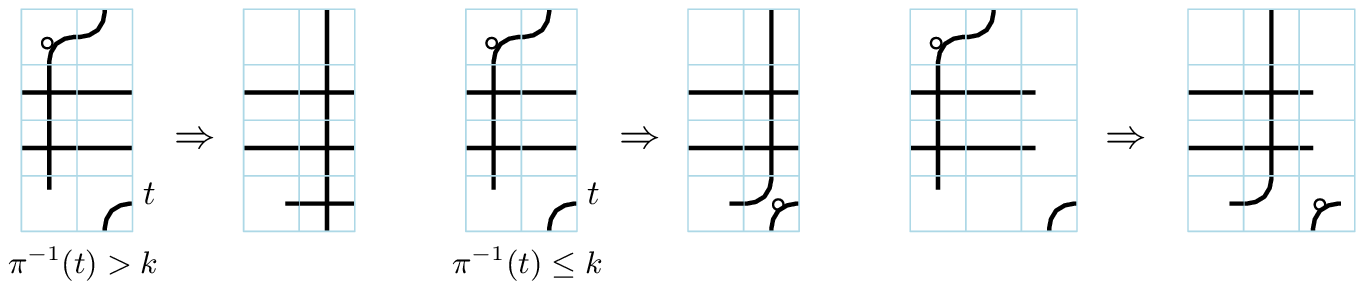}
    \caption{Case L3}
    \label{fig:L3}
\end{figure}
%if $\red\neq \emptyset$, we may apply a right move at $\red$ and transition to the state $(E',\red', \tilde{\yellow})$, where $E'$ is the resulting BPD and $\red'$ is the new position where a right move shall be applied as determined by the right insertion algorithm. If a terminal move is applied, $\red'=\emptyset$. Similarly, if $\yellow\neq \emptyset$, we may apply a left move at $\yellow$ and transition to the state $(E'', \tilde{\red},\yellow')$. It remains to explain what $\tilde{\yellow}$ and $\tilde{\red}$ are. In most cases, $\tilde{\yellow} = \yellow$ and $\tilde{\red} = \red$. The exceptional cases are shown in the following Figure. \daoji{need to illustrate here with pictures the exceptional cases and write just a little explanation for what those pictures mean}

In the following cases, there is a unique state that $(E,\red,\yellow,u)$ can transition to.
\begin{enumerate}[(1)]
    \item Only one of $\red$ and $\yellow$ is $\neq\emptyset$.
    \item $E(\yellow)=\rt$, $E(\red)=\bt$, $\yellow$ and $\red$ are on the same row, and all tiles between $\yellow$ and $\red$ are $\+$. Only the right marker can move in this case.
    \item $E(\red)=\rt$, $E(\yellow)=\bt$, $\yellow$ is immediately to the right of $\red$. (Note that there cannot be $\+$'s between $\yellow$ and $\red$ by forbidden configuration (3).) Only the left marker can move in this case. 
    \item $E(\yellow)=\rt$, $E(\red)=\bt$, $\yellow$ and $\red$ are on the same column, and all tiles between $\yellow$ and $\red$ are $\+$. Only the right marker can move in this case.
    \item $E(\red)=\rt$, $E(\yellow)=\bt$, $\red$ and $\yellow$ are on the same column, and all tiles between $\red$ and $\yellow$ are $\+$. Only the left marker can move in this case.
\end{enumerate}
\begin{figure}[h]
    \centering
    \includegraphics[scale=0.65]{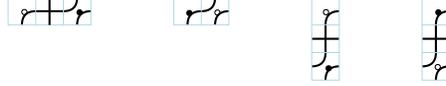}
    \caption{Only one marker is allowed to move}
    \label{fig:one-move-not-allowed}
\end{figure}
\vskip 0.5em
We are ready to prove Theorem \ref{thm:comm}.

\begin{proof}
We structure our proof using  Diamond Lemma. Let $\red:=(l,c)$ where $c$ is maximal such that $D(l,c)=\rt$. Let $\yellow=(k,b)$ where $b$ is minimal such that $D(k,b)=\rt$. Let $u=\rt$. Let $S_0$ be the initial admissible state $(D,\red,\yellow, u)$, and $\mathcal{S}:=\{S: S_0 \xRightarrow{*} S\}$, where $\xRightarrow{*}$ denotes the transitive closure of the relation $\Rightarrow$. We wish to show that there exist a unique $T\in \mathcal{S}$ such that for all $S\in\mathcal{S}$, $S\xRightarrow{*}T$. By Diamond Lemma, we need to argue that $\Rightarrow$ is well-founded (i.e., there are no infinite chains) on $\mathcal{S}$, as well as locally confluent.

To see $\Rightarrow$ is well-founded, we notice that whenever the $\yellow$ marker is moved, the area under the pipe with the $\yellow$ marker strictly decreases, and whenever the $\red$ marker is moved, either the area under the pipe with the $\red$ marker strictly decreases, or if the pipe itself doesn't move, the length of the pipe from the southern border to the $\red$ marker strictly decreases. (If two states are identified as equal, we assign the smaller statistics of the two states to this equivalence class.)

Let $(E,\red,\yellow,u)\in S$ such that neither $\red$ nor $\yellow$ is $\emptyset$.
 First we notice that if advancing the left marker at $\yellow$ and advancing the right marker at $\red$ affect disjoint sets of tiles, they obviously commute. It suffices to consider the cases when the two moves interfere with each other.

\textbf{Case ($\red \neq \yellow$ on the same row).} Suppose that $\red=(a,b)$ and $\yellow=(a,c)$. We first consider the case that $c=b+1$, $E(\red)=E(\yellow)=\bt$. This state is identified with the state $(E, (a,c), (a,b),\jt)$. Let $s$ denote the pipe that contains the $\rt$-turn in $E(a,b)$ and $t$ denote the pipe that contains the $\rt$-turn in $E(a,b+1)$. We organize our cases as follows:
\begin{itemize}
    \item $s$ and $t$ intersect (Figure~\ref{fig:Hstcross})
    \item $s$ and $t$ do not intersect
    \begin{itemize}
        \item Right move is a terminal move. Left move could be terminal or non-terminal, but the two cases are similar (Figure~\ref{fig:Hst-nocross-rterm})
        \item Right move is not a terminal move
        \begin{itemize}
        \item Left move is not a terminal move
        \begin{itemize}
            \item $E(\droop_E(a,b+1))=\bl$ (Figure~\ref{fig:Hst-nocross-noterm})
            \item $E(\droop_E(a,b+1))=\rt$ (Figure~\ref{fig:Hst-nocross-noterm2})
        \end{itemize}
        \item Left move is a terminal move. This will force the right move to be terminating in two steps. (Figure~\ref{fig:Hst-nocross-lterm})
        \end{itemize}
    \end{itemize}
\end{itemize}

\begin{minipage}{\linewidth}% to keep image and caption on one page
%\makebox[\linewidth]{%        to center the image
%  \includegraphics[scale=0.6]{doublyactivated.eps}}
%\captionof{figure}{Doubly activated BPDs}\label{fig:doublyactivated}%      only if needed  
    \centering
    \includegraphics[scale=0.6]{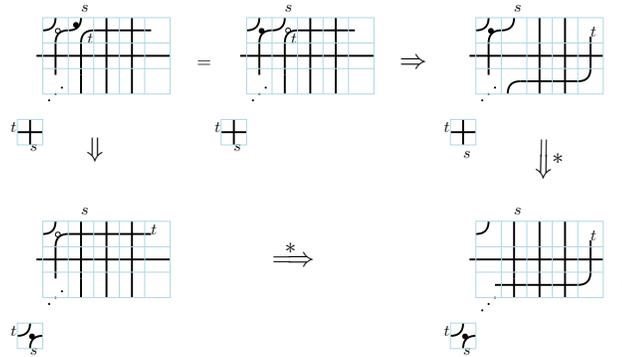}
    \captionof{figure}{$s$ and $t$ cross. Note that the two horizontal arrows could also both be terminal left moves, but this does not affect commutativity.}
    \label{fig:Hstcross}
\end{minipage}
\vskip 1em

\noindent%
\begin{minipage}{\linewidth}
    \centering
    \includegraphics[scale=0.6]{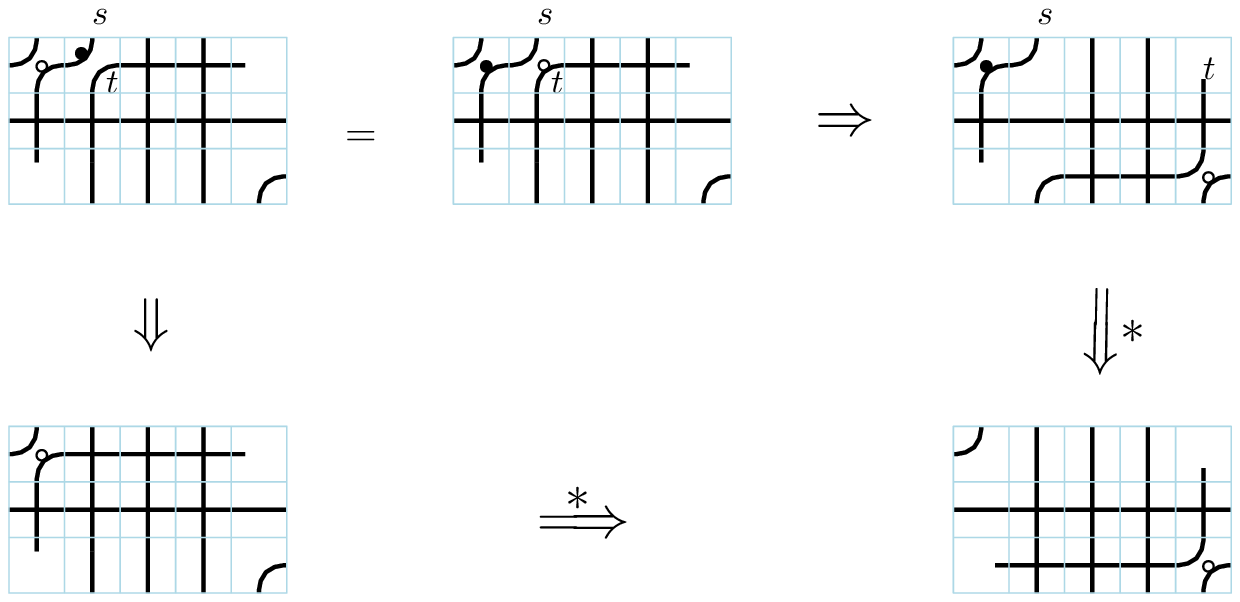}
    \captionof{figure}{ $s$ and $t$ do not cross, right move is terminal. Note that the two horizontal arrows could also both be terminal left moves, but this does not affect commutativity.}
    \label{fig:Hst-nocross-rterm}
\end{minipage}
\vskip 1em

\noindent%
\begin{minipage}{\linewidth}
    \centering
    \includegraphics[scale=0.55]{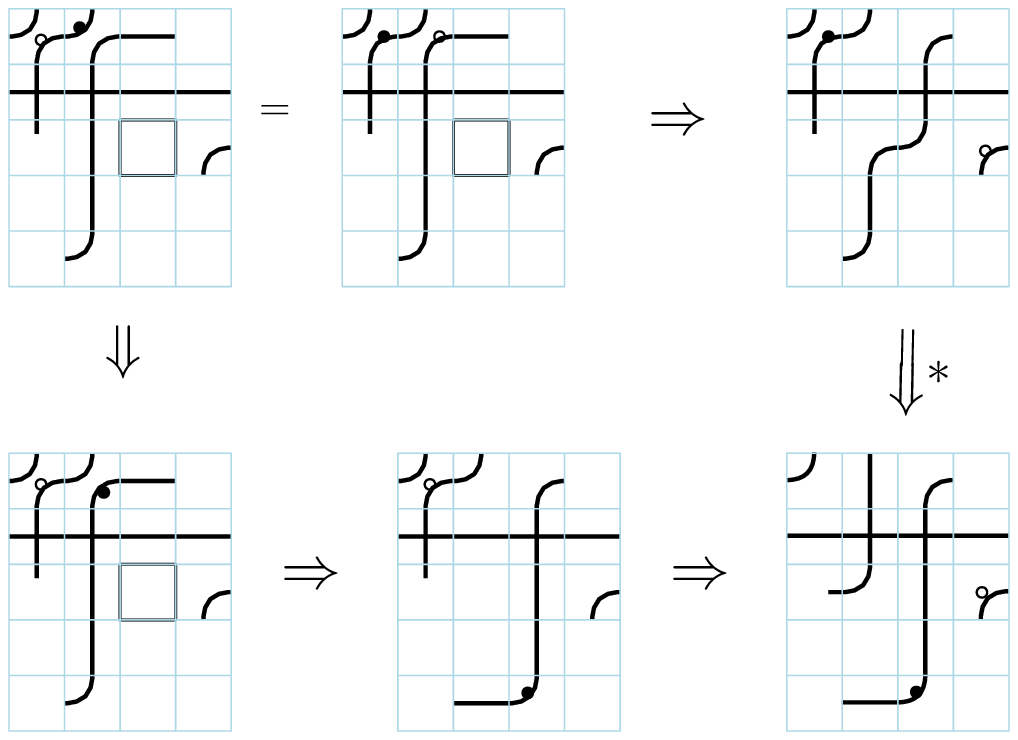}
    \captionof{figure}{$s$ and $t$ do not cross, neither left nor right move is terminal, and $E(\droop_E(a,b+1))=\bl$.}
    \label{fig:Hst-nocross-noterm}
\end{minipage}

\noindent%
\begin{minipage}{\linewidth}
    \centering
    \includegraphics[scale=0.6]{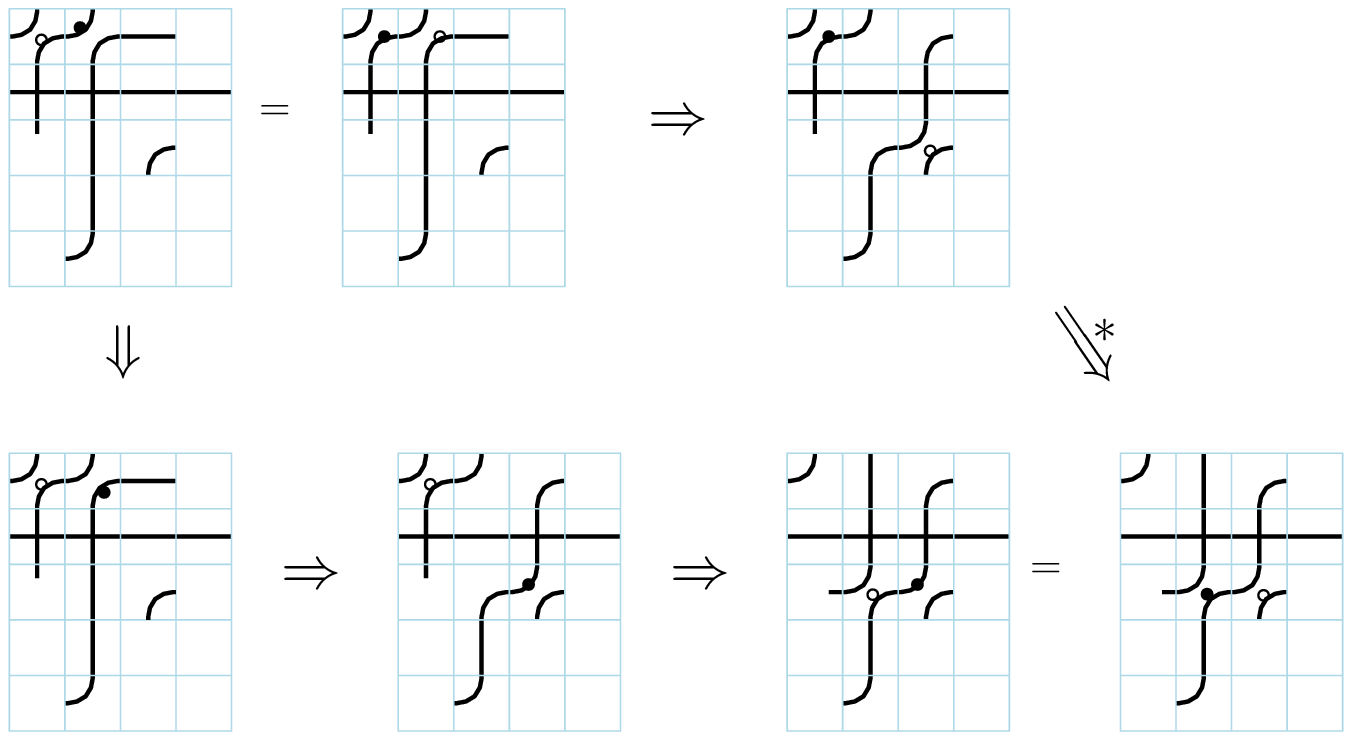}
    \captionof{figure}{$s$ and $t$ do not cross, neither left nor right move is terminal, and $E(\droop_E(a,b+1))=\rt$.}
    \label{fig:Hst-nocross-noterm2}
\end{minipage}
\vskip 1em
\noindent%
\begin{minipage}{\linewidth}
    \centering
    \includegraphics[scale=0.7]{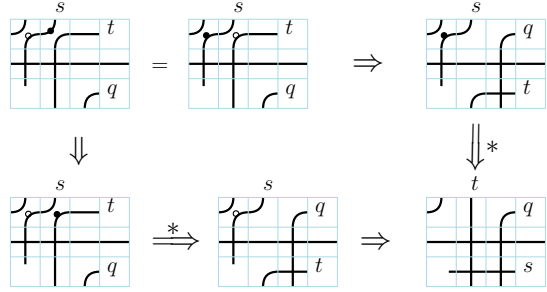}
    \captionof{figure}{$s$ and $t$ do not cross and left move is terminal. Let $q$ be the pipe that contains the $\rt$ in $E(\droop_E(a,b+1))$. It must be the case that $\pi^{-1}(q)>k\ge d_2\ge d_1\ge l$. Therefore a right move that droops $t$ into $q$ must be followed by a terminal move.}
    \label{fig:Hst-nocross-lterm}
\end{minipage}
\vskip 2em

It is easy to see that in other cases when $\red$ and $\yellow$ are on the same row, either the moves at $\red$ and $\yellow$ are disjoint, or only one of the markers is allowed to move.
\vskip 1em
%Consider the case that $b<c$ and $c-b>1$. 
%\begin{itemize}
 %   \item If $u=\jt$, it is easy to see from the cases in (Rj1)--(Rj4) that the right move at $(a,b)$ and the left move at $(a,c)$ are disjoint.
  %  \item If $u=\rt$, then by forbidden configuration (1) and (3) we know either $E(a,b+1)=\htile$, or $E(a,b)=\bt$ and $E(a,b+1)=\jt$. In either case, the right move at $(a,b)$ and the left move at $(a,c)$ are disjoint.
%\end{itemize}
%Now consider $b>c$ and $b-c>1$. 

\textbf{Case ($\red\neq\yellow$ on the same column).} Suppose that $\red=(a,b)$, $\yellow=(c,b)$, $c>a$. We consider the case when $E(\red)=E(\yellow)=\bt$, and the tiles strictly between $\red$ and $\yellow$ in column $c$ are $\+$. This state is identified with the state $(E,(c,b),(a,b),\jt)$. Let $s$ denote the pipe that contains the $\rt$-turn in $E(a,b)$, $t$ denote the pipe that contains the $\rt$ in $E(c,b)$. We organize the cases as follows:
\begin{itemize}
    \item $s$ and $t$ intersect (Figure~\ref{fig:Vstcross})
    \item $s$ and $t$ do not intersect
    \begin{itemize}
        \item Left move is a terminal move
        \begin{itemize}
            \item Right move is not a terminal move (Figure~\ref{fig:Vst-nocross-lterm-rnoterm})
            \item Right move is a terminal move (Figure~\ref{fig:Vst-nocross-lterm-rterm})
        \end{itemize}
        \item Left move is not a terminal move
        \begin{itemize}
            \item Right move is not a terminal move
            \begin{itemize}
                \item $E(\droop_E(c,b))=\bl$ (Figure~\ref{fig:Vst-nocross-noterm1})
                \item $E(\droop_E(c,b))=\rt$ (Figure~\ref{fig:Vst-nocross-noterm2})
            \end{itemize}
            \item Right move is a terminal move
        \end{itemize}
    \end{itemize}
\end{itemize}
\vskip 2em
\noindent%
\begin{minipage}{\linewidth}
    \centering
    \includegraphics[scale=0.6]{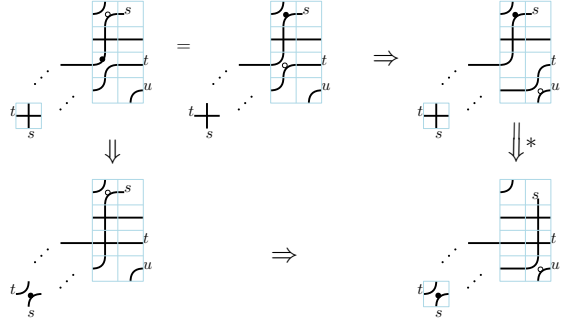}
    \captionof{figure}{$s$ and $t$ cross. Let $(d,e):=\droop_E(c,b)$. Notice that in column $b$ there must be a $\jt$ at or below row $d$, since $s$ and $t$ cross must cross below row $c$. If $E(d,e)=\rt$, let $u$ be the pipe that contains this $\rt$-turn. If $\pi^{-1}(u)>d_2$, then there must be no $\jt$ at or below row $\pi^{-1}(u)\le d$. This is a contradiction, so it must be the case that $\pi^{-1}(u)\le d_2\le k$. This means that the left move from $(E,\red,\yellow,u)$ must not be terminal.}
    \label{fig:Vstcross}
\end{minipage}

\vskip 2em
\noindent%
\begin{minipage}{\linewidth}
    \centering
    \includegraphics[scale=0.6]{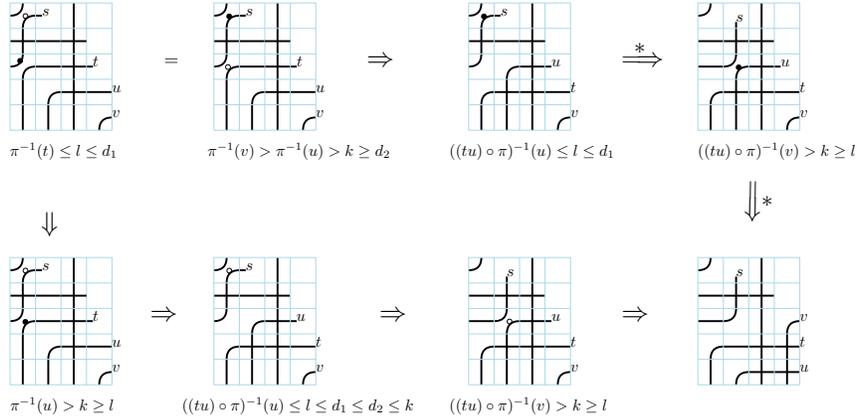}
    \captionof{figure}{$s$ and $t$ do not cross, left move is terminal, right move is not terminal. Let $(d,e):=\droop_E(c,b)$. Since the left move is terminal, it must be that $E(d,e)=\rt$ and  the pipe $u$ that contains this $\rt$-turn satisfies $\pi^{-1}(u)>k$. Furthermore, there are no $\jt$ or $\bl$ at or below row $d$. Since the right move is not terminal, it must be that $\pi^{-1}(t)\le l$. Let $v$ be the pipe that contains the $\rt$-turn at $E(\droop_E(d,e))$. The diagram shows that the states must converge in the terminal state.}
    \label{fig:Vst-nocross-lterm-rnoterm}
\end{minipage}

\vskip 2em
\noindent%
\begin{minipage}{\linewidth}
    \centering
    \includegraphics[scale=0.6]{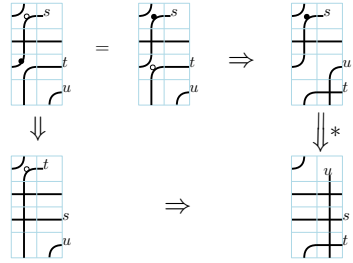}
    \captionof{figure}{$s$ and $t$ do not cross, both left and right moves are terminal.}
    \label{fig:Vst-nocross-lterm-rterm}
\end{minipage}

\vskip 1em
\noindent%
\begin{minipage}{\linewidth}
\centering
    \includegraphics[scale=0.6]{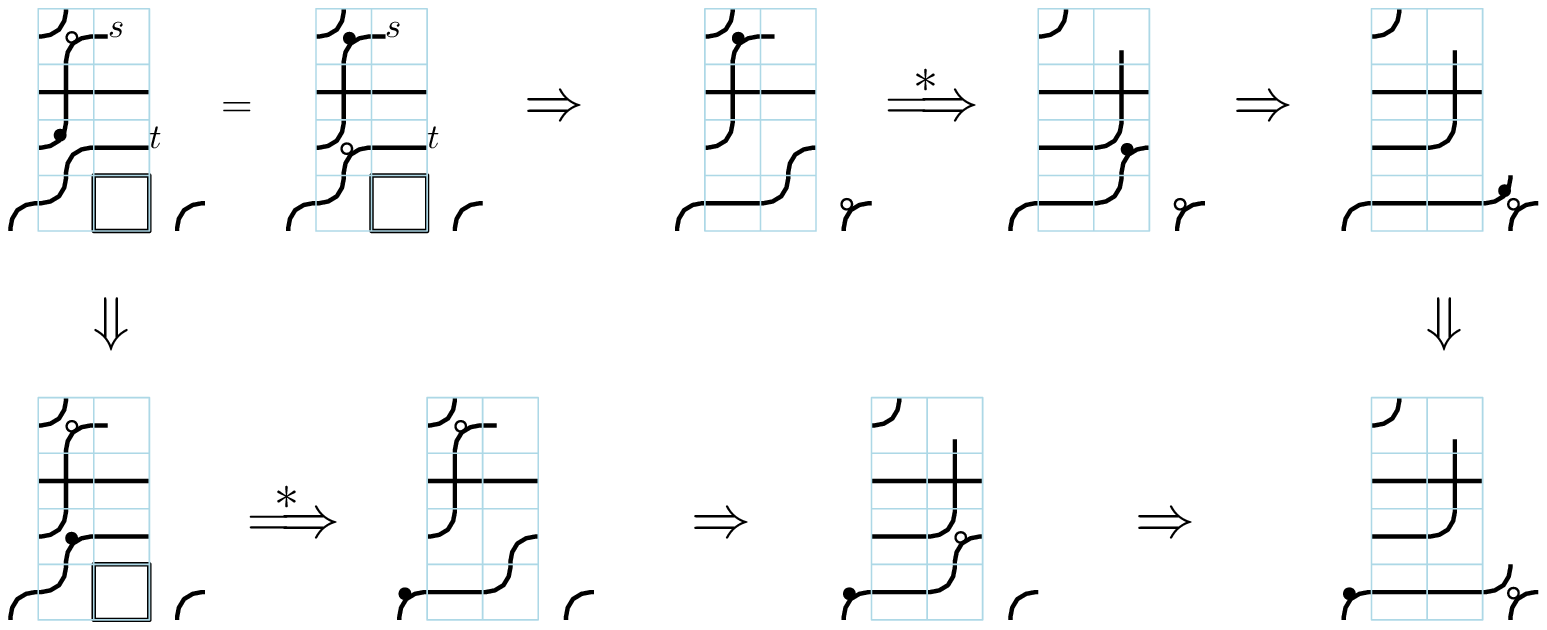}
    \captionof{figure}{$s$ and $t$ do not cross, neither left nor right move is terminal, and $E(\droop_E(c,b))=\bl$}
    \label{fig:Vst-nocross-noterm1}
\end{minipage}

\vskip 1em
\noindent%
\begin{minipage}{\linewidth}
\centering
    \includegraphics[scale=0.6]{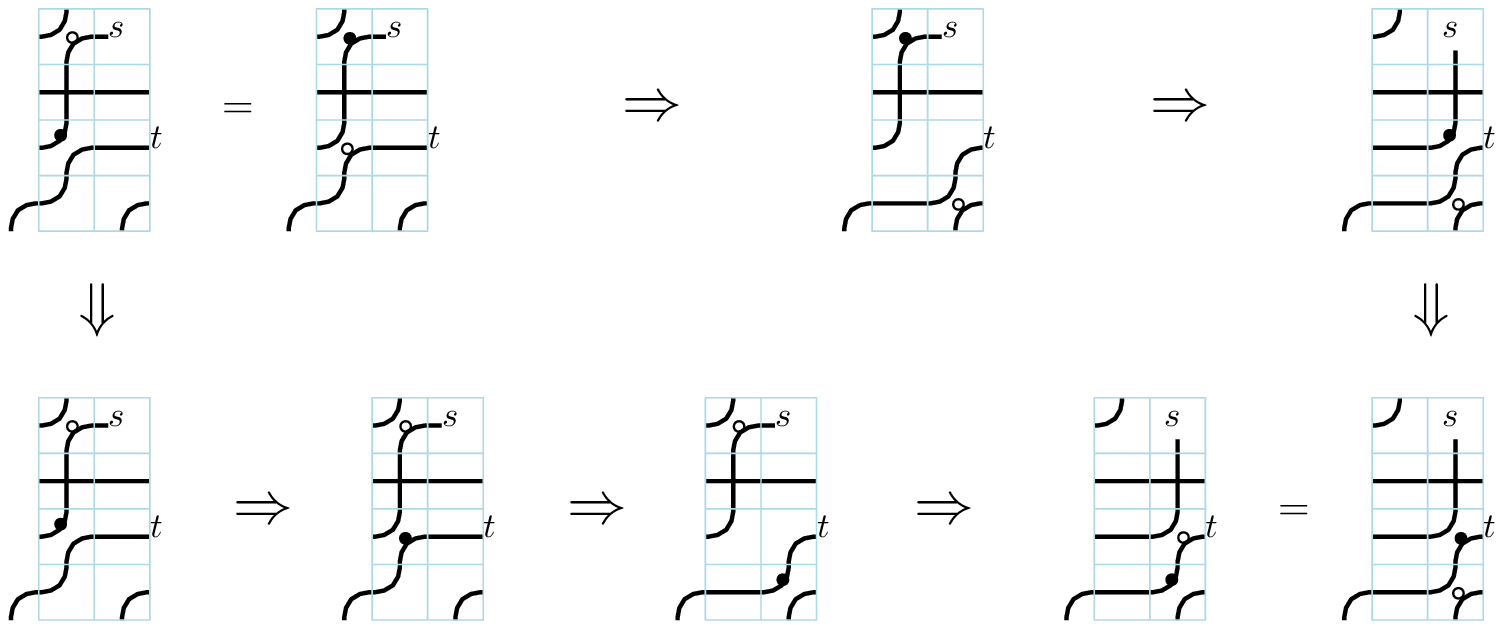}
    \captionof{figure}{$s$ and $t$ do not cross, neither left nor right move is terminal, and $E(\droop_E(c,b))=\rt$}
    \label{fig:Vst-nocross-noterm2}
\end{minipage}

\vskip 1em
\noindent%
\begin{minipage}{\linewidth}
\centering
    \includegraphics[scale=0.6]{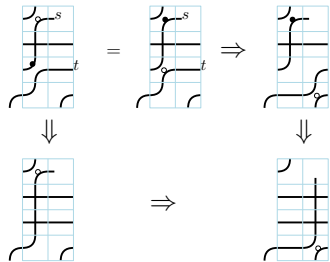}
    \captionof{figure}{$s$ and $t$ do not cross, left move is non-terminal, right move is terminal.}
    \label{fig:Vst-nocross-lnoterm-rterm}
\end{minipage}
\vskip 1em    
\textbf{Case ($\red=\yellow$).} Suppose $\red=\yellow=(a,b)$. Let $s$ be the pipe that contains the $\rt$-turn at $(a,b)$. Let $(c,d)=\droop_E(a,b)$. By forbidden configuration (3), $d=b+1$. We organize the cases as follows:
\begin{itemize}
    \item $E(c,d)=\bl$ (Figure~\ref{fig:C-bl})
    \item $E(c,d)=\rt$: let $t$ be the pipe that contains the $\rt$-turn at $(c,d)$
    \begin{itemize}
        \item $s$ and $t$ intersect
        \item $s$ and $t$ do not intersect
        \begin{itemize}
            \item Neither left nor right move is terminal
            \begin{itemize}
                \item $E(\droop_E(c,d))=\bl$ (Figure~\ref{fig:C-nocross-noterm})
                \item $E(\droop_E(c,d))=\rt$ (Figure~\ref{fig:C-nocross-noterm2})
            \end{itemize}
            \item Left move is terminal. In this case right move must also be terminal. (Figure~\ref{fig:C-nocross-lterm})
            \item Left move is not terminal, but right move is terminal.
        \end{itemize}
    \end{itemize}
\end{itemize}

\vskip 1em
\noindent%
\begin{minipage}{\linewidth}
\centering
    \includegraphics[scale=0.65]{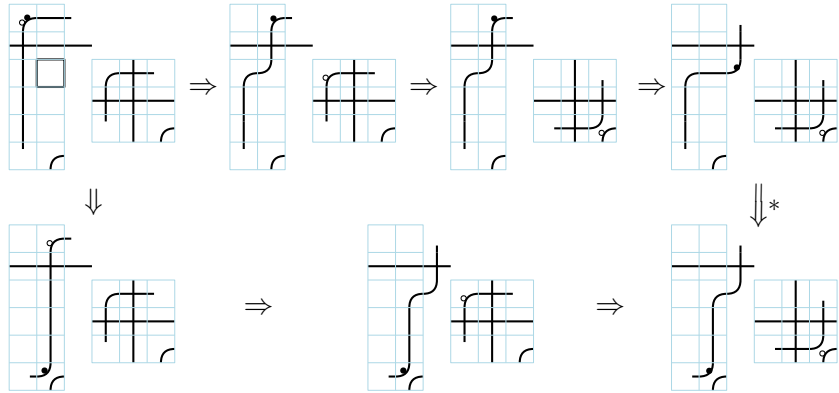}
    \captionof{figure}{$E(c,d)=\bl$. Notice that the state at which the moves converge could also be the result of a terminal left move, but this does not affect commutativity.}
    \label{fig:C-bl}
\end{minipage}

\vskip 1em
\noindent%
\begin{minipage}{\linewidth}
\centering
    \includegraphics[scale=0.6]{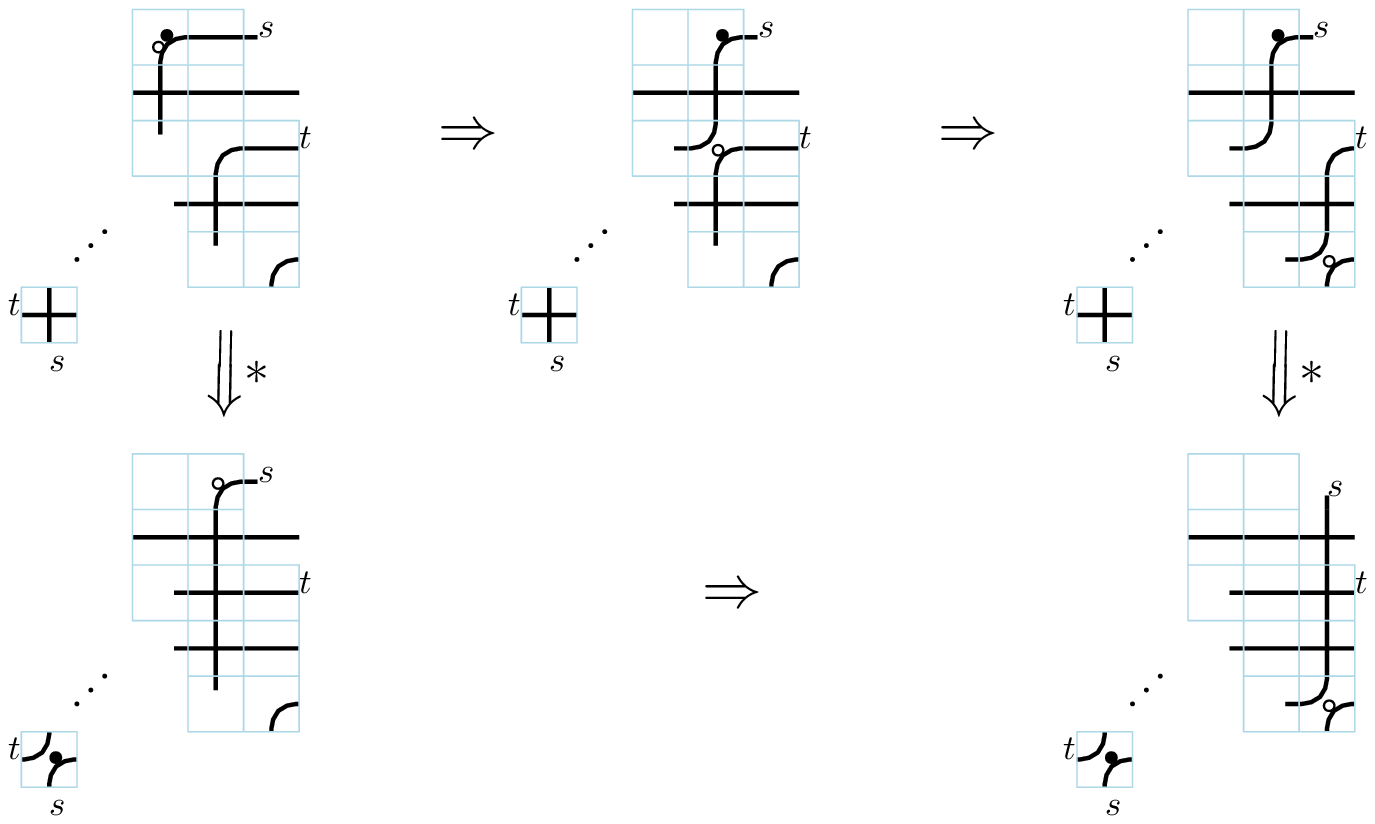}
    \captionof{figure}{$E(c,d)=\rt$, $s$ and $t$ cross}
    \label{fig:C-stcross}
\end{minipage}

\vskip 1em
\noindent%
\begin{minipage}{\linewidth}
\centering
    \includegraphics[scale=0.6]{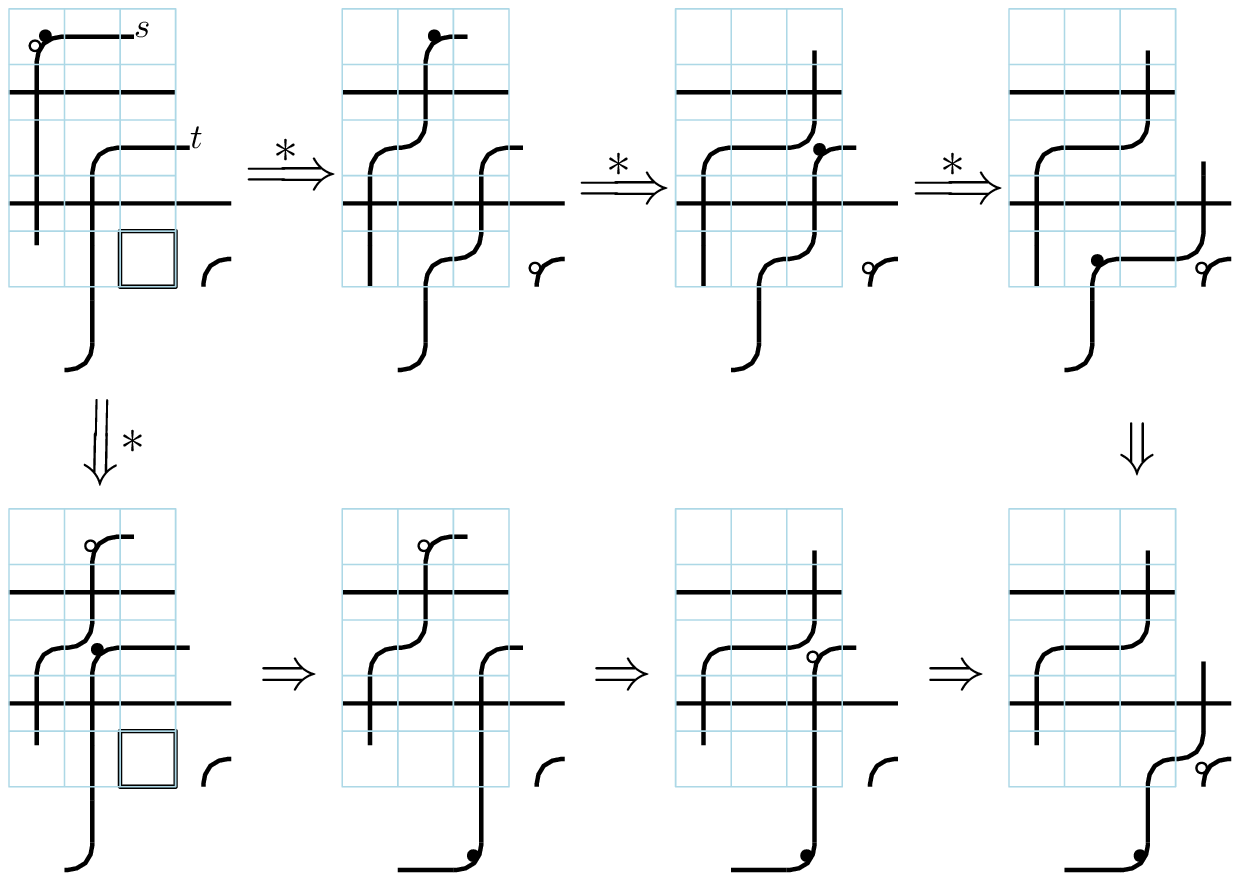}
    \captionof{figure}{$s$ and $t$ do not intersect, neither left nor right move is terminal, $E(\droop_E(c,d))=\bl$}
    \label{fig:C-nocross-noterm}
\end{minipage}

\vskip 1em
\noindent%
\begin{minipage}{\linewidth}    \centering
    \includegraphics[scale=0.6]{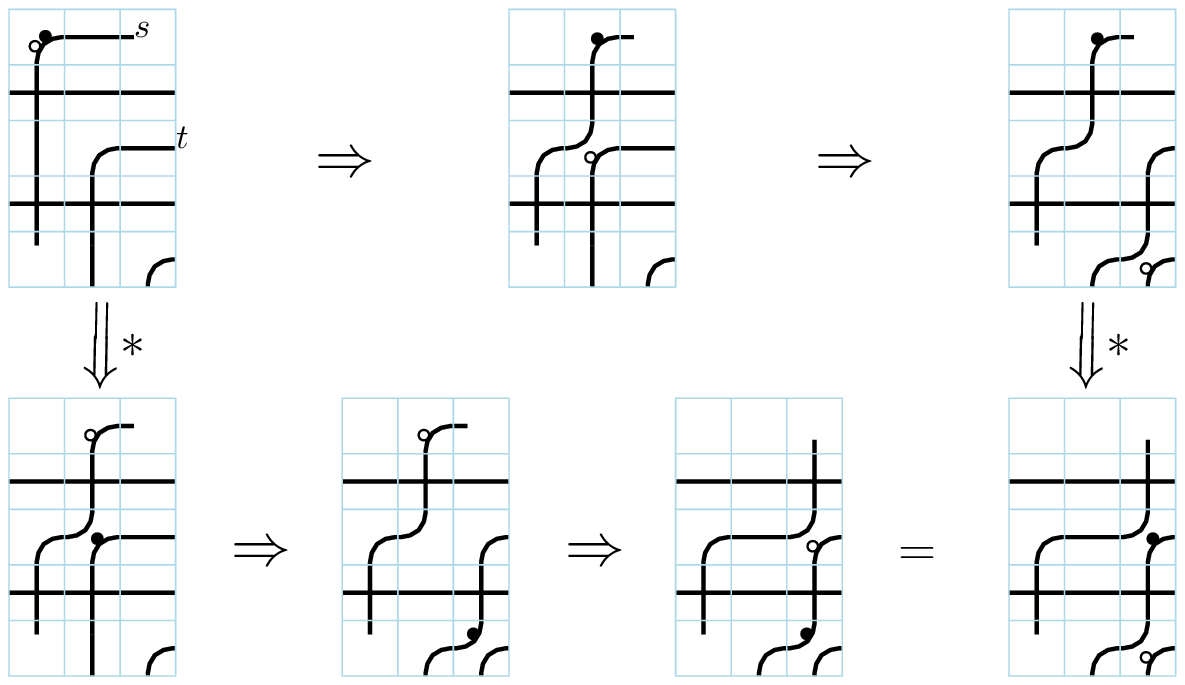}
    \captionof{figure}{$s$ and $t$ do not intersect, neither left nor right move is terminal, $E(\droop_E(c,d))=\rt$}
    \label{fig:C-nocross-noterm2}
\end{minipage}

\vskip 1em
\noindent%
\begin{minipage}{\linewidth}    \centering
    \includegraphics[scale=0.6]{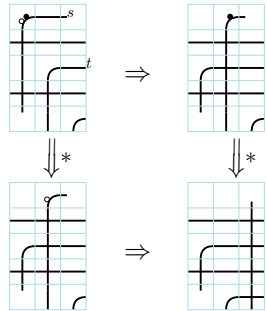}
    \captionof{figure}{$s$ and $t$ do not intersect, left move is terminal. This means $\pi^{-1}(t)>k\ge d_2$, which forces right move to be terminal.}
    \label{fig:C-nocross-lterm}
\end{minipage}

\vskip 1em
\noindent%
\begin{minipage}{\linewidth}    \centering
    \includegraphics[scale=0.6]{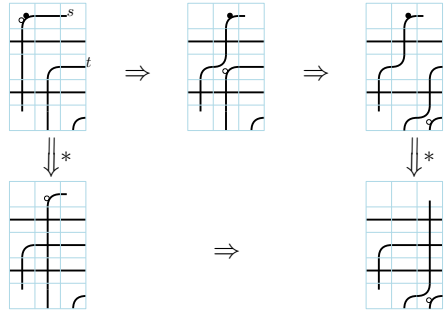}
    \captionof{figure}{$s$ and $t$ do not cross, left move is not terminal, right move is terminal. Notice that the state at which the moves converge could also be the result of a terminal left move, but this does not affect commutativity.}
\end{minipage}
\vskip 1em
\textbf{Case ($\red$ NW of $\yellow$).} Suppose $\red = (a,b)$ and $\maxd(a,b)=\yellow$. In all other situations when $\red$ is NW of $\yellow$, the move at $\red$ and the move at $\yellow$ are disjoint.
Let $(a_1,b_1)=\maxd(a,b)$, and $(a_2,b_2)=\droop(a_1,b_1)$. It must be the case that $b_1=b+1$. We organize the cases as follows:
\begin{itemize}
    \item $E(a_1,b)=\vtile$ 
    \begin{itemize}
        \item $E(a_2,b_1)=\vtile$ (Figure~\ref{fig:NW-1})
        \item $E(a_2,b_1)=\jt$ (Figure~\ref{fig:NW-2})
    \end{itemize}
    \item $E(a_1,b)=\jt$
\end{itemize}

\vskip 1em
\noindent%
\begin{minipage}{\linewidth}
\centering
    \includegraphics[scale=0.55]{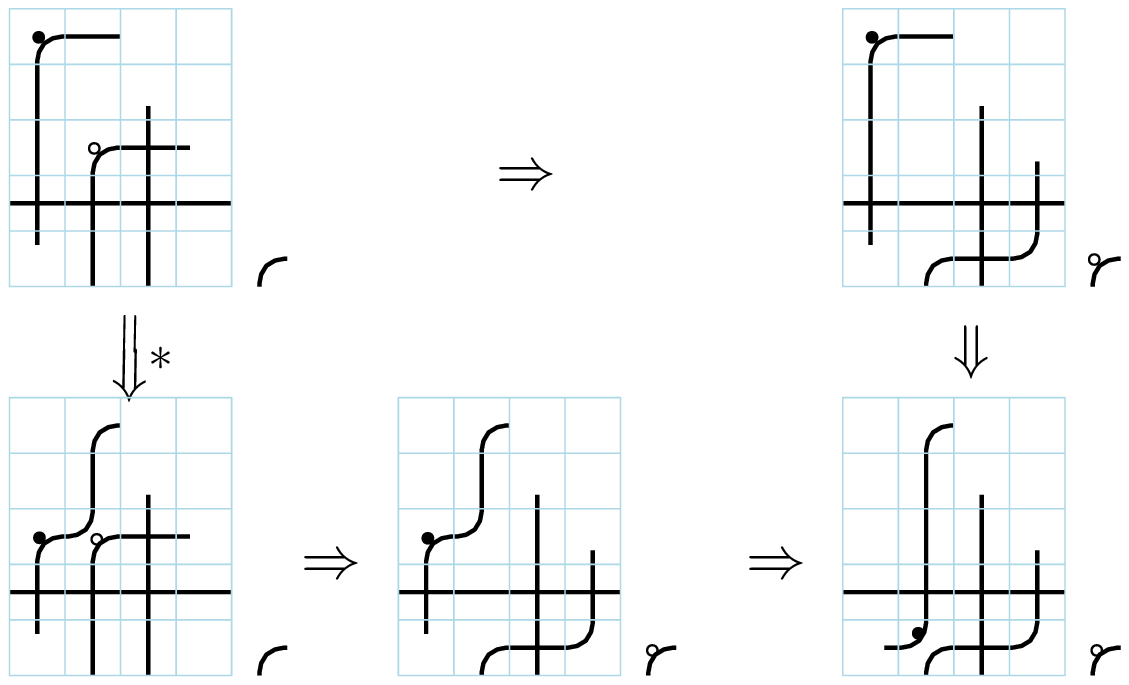}
    \captionof{figure}{$E(a_1,b)=\vtile$, $E(a_2,b_1)=\vtile$}
    \label{fig:NW-1}
\end{minipage}

\vskip 1em
\noindent%
\begin{minipage}{\linewidth}
\centering
    \includegraphics[scale=0.55]{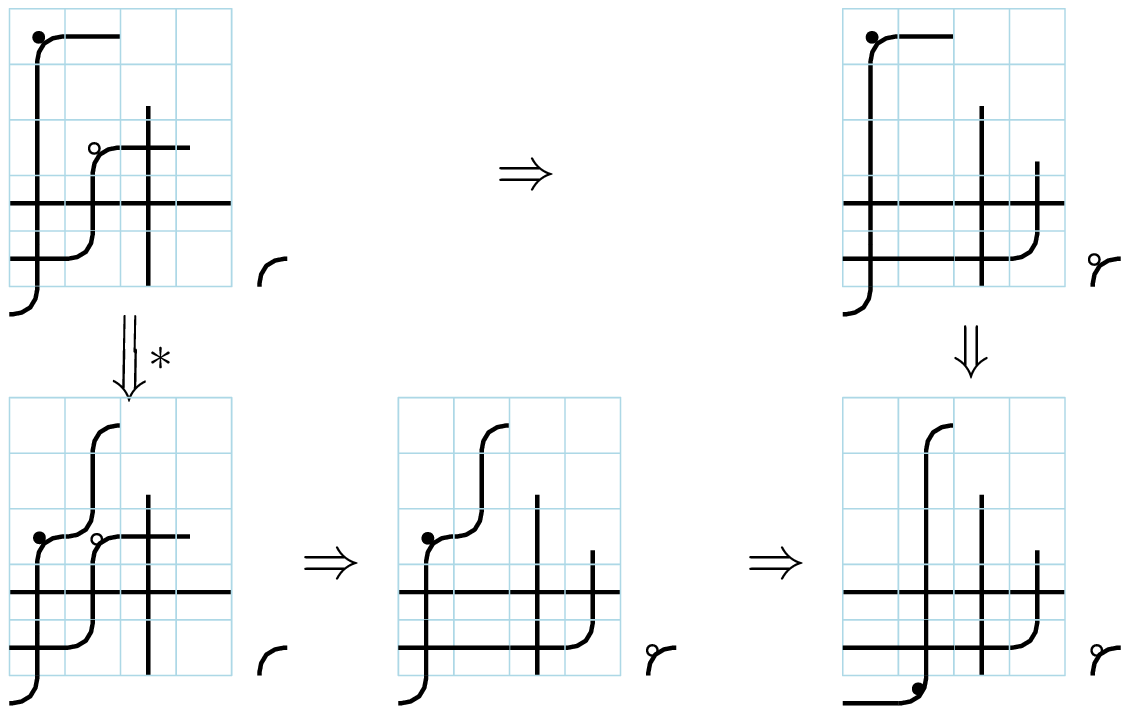}
    \captionof{figure}{$E(a_1,b)=\vtile$, $E(a_2,b_1)=\jt$}
    \label{fig:NW-2}
\end{minipage}

\vskip 1em
\noindent%
\begin{minipage}{\linewidth}
\centering
    \includegraphics[scale=0.55]{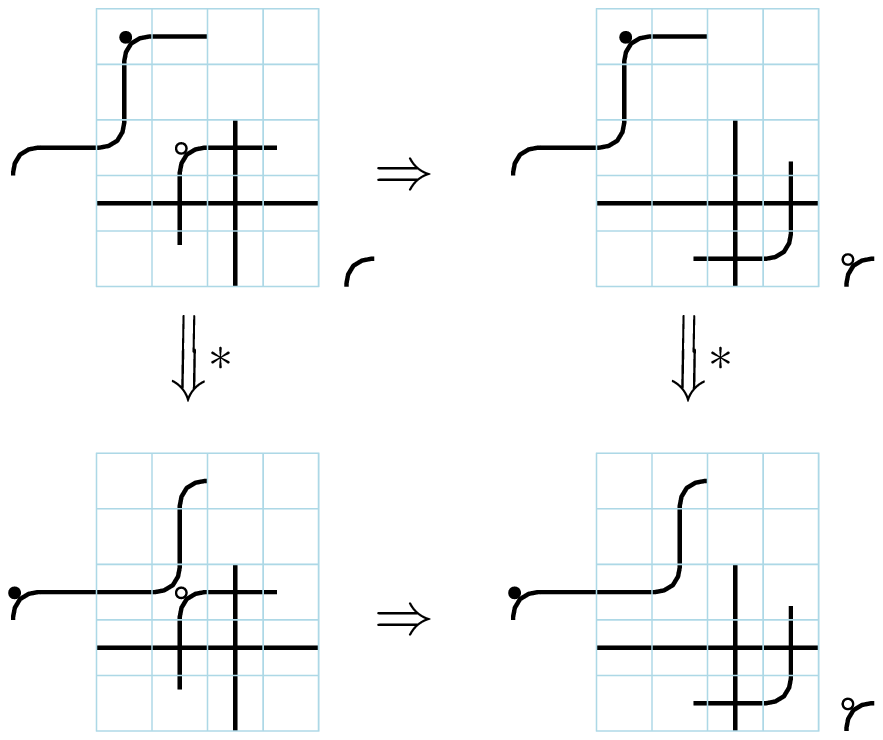}
    \captionof{figure}{$E(a_1,b)=\jt$}
    \label{fig:NW-3}
\end{minipage}
\vskip 1em
Case ($\red$ NE of $\yellow$). We suppose $\droop(\yellow)=\maxd(\red)=(a,b)$. In all other cases, the move at $\red$ and the move at $\yellow$ affect disjoint sets of tiles.  Supose $s$ is the pipe that passes through $\yellow$, $t$ is the pipe that passes through $\red$, and $u$ is the pipe that passes through $(a,b)$. We organize the cases as follows:
\begin{itemize}
    \item $E(a,b)=\bl$ (Figure~\ref{fig:NE-bl})
    \item $E(a,b)=\rt$
        \begin{itemize}
            \item Left move is not terminal (Figure~\ref{fig:NE-r-noterm})
            \item Left move is terminal (Figure~\ref{fig:NE-r-term})
        \end{itemize}
\end{itemize}

\vskip 1em
\noindent%
\begin{minipage}{\linewidth}    \centering
    \includegraphics[scale=0.52]{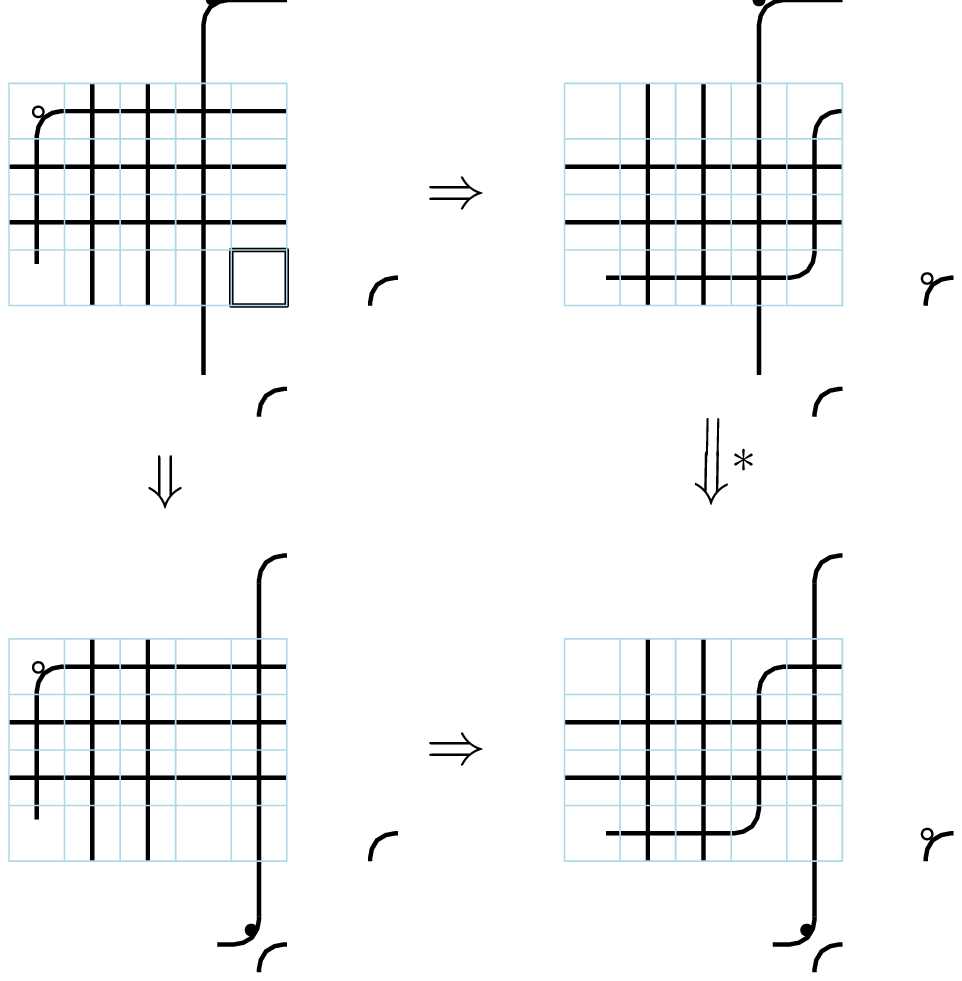}
    \captionof{figure}{$E(a,b)=\bl$}
    \label{fig:NE-bl}
\end{minipage}

\vskip 1em
\noindent%
\begin{minipage}{\linewidth}
\centering
    \includegraphics[scale=0.52]{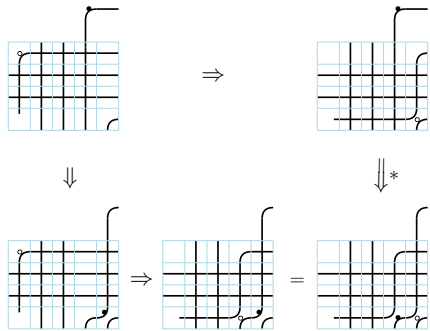}
    \captionof{figure}{$E(a,b)=\rt$. Left move is not terminal.}
    \label{fig:NE-r-noterm}
\end{minipage}

\vskip 1em
\noindent%
\begin{minipage}{\linewidth}
    \centering
    \includegraphics[scale=0.6]{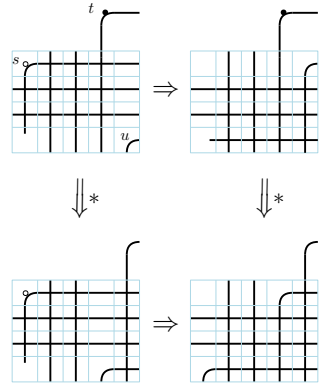}
    \captionof{figure}{$E(a,b)=\rt$. Left move is terminal. In this case, $\pi^{-1}(u)>k\ge d_2$. Then, $((su)\circ\pi)^{-1}(u)=\pi^{-1}(s)$. Since $s$ and $t$ cross in $E$, it must be the case  that $\pi^{-1}(s)>d_1\ge l$. Therefore, $((su)\circ\pi)^{-1}(u)>l$ and the downward arrow on the right in the diagram is correct.}
    \label{fig:NE-r-term}
\end{minipage}
\vskip 1em

\textbf{Case ($\red$ SE of $\yellow$)}. We can assume $\red=\droop(\yellow)$, otherwise the two moves affect disjoint sets of tiles (Figure~\ref{fig:SE}).
\vskip 1em
\noindent%
\begin{minipage}{\linewidth}
   \centering
    \includegraphics[scale=0.6]{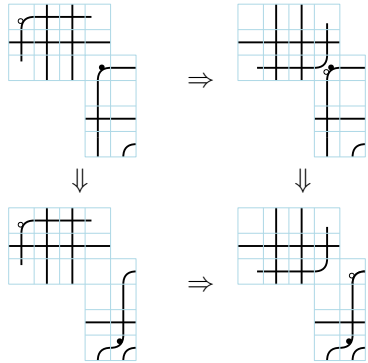}
    \captionof{figure}{$\red$ SE of $\yellow$. The left move (top arrow in the diagram) cannot be a terminating move, because the pipe that passes through $\red$ in $E$ must exit from a row $\le l \le k$.}
    \label{fig:SE}
\end{minipage}
\vskip 1em
\textbf{Case ($\red$ SW of $\yellow$)}. In this case it's impossible to have $\droop_E(\yellow)=\maxd_E{\red}$, because this implies the pipe $s$ that passes through $\red$ in $E$ and the pipe $t$ that passes through $\yellow$ in $E$ must cross at a tile where $s$ contains the horizontal segment, which is forbidden. Therefore, the moves at $\red$ and $\yellow$ involve disjoint tiles and must commute.
\end{proof}
\bibliographystyle{alpha}
\bibliography{ref}

\end{document}